\newtheorem{theorem}{Theorem}[section]
\newtheorem{lemma}[theorem]{Lemma}
\newtheorem{proposition}[theorem]{Proposition}
\newtheorem{corollary}[theorem]{Corollary}
\newtheorem{remar}[theorem]{Remark}
\theoremstyle{definition}
\newtheorem{examp}[theorem]{Example}
\newtheorem{prob}[theorem]{Open Problem}
\newenvironment{example}{\begin{examp}\rm}{\diams\end{examp}}
\newcommand{\diams}{\unskip\nobreak\hfil\penalty50%
\hskip1em\hbox{}\nobreak\hfil%
$\diamondsuit$\parfillskip=0pt\finalhyphendemerits=0}
\newcommand{\bfind}[1]{\index{#1}{\bf #1}}
\newcommand{\n}{\par\noindent}
\newcommand{\sn}{\par\smallskip\noindent}
\newcommand{\mn}{\par\medskip\noindent}
\newcommand{\bn}{\par\bigskip\noindent}
\newcommand{\pars}{\par\smallskip}
\newcommand{\parm}{\par\medskip}
\newcommand{\cal}{\mathcal}
\newcommand{\sep}{^{\rm sep}}
\newcommand{\chara}{\mbox{\rm char}\,}
\newcommand{\Gal}{\mbox{\rm Gal}\,}
\newcommand{\drk}{\mbox{\rm defrk}\,}
\newcommand{\inddrk}{\mbox{\rm inddrk}\,}
\newcommand{\ann}{\mbox{\rm ann}\,}
\newcommand{\tr}{\mbox{\rm Tr}\,}
\newcommand{\cO}{\mathcal{O}}
\newcommand{\cM}{\mathcal{M}}
\newcommand{\cE}{\mathcal{E}}
\newcommand{\cH}{\mathcal{H}}
\newcommand{\cC}{\mathcal{C}}
\newcommand{\cL}{\mathcal{L}}
\newcommand{\cG}{\mathcal{G}}
\newcommand{\rme}{\mbox{\rm e}\,}
\newcommand{\pH}{\mathop{\raisebox{-.2ex}{\rule[-1pt]{0pt}{1pt}%
\mbox{\large\bf H}}}}
\newcommand{\R}{\mathbb R}
\newcommand{\Q}{\mathbb Q}
\newcommand{\N}{\mathbb N}
\newcommand{\Z}{\mathbb Z}
\newcommand{\F}{\mathbb F}
\begin{document}
\title[Definable coarsenings of valuation rings]{On certain definable coarsenings of valuation rings and their applications}
\author{Franz-Viktor Kuhlmann}
\date{26.\ 11.\ 2025}


\address{Institute of Mathematics, University of Szczecin,
ul. Wielkopolska 15,
70-451 Szczecin, Poland}
\email{fvk@usz.edu.pl}

\begin{abstract}\noindent
We show how suitable extensions $(L|K,v)$ of prime degree of valued fields give rise
to definable coarsenings of the valuation rings of $L$ and $K$. In the case of
Artin-Schreier and Kummer extensions with wild ramification, we can also define the
ramification ideal. We demonstrate the use of the coarsenings on $L$, their maximal
ideals, and the ramification ideals for the classification of defects and for the
presentation of the K\"ahler differentials of the extension of the valuation rings of
$(L|K,v)$, and their annihilators.
Finally, we give a construction that realizes predescribed convex subgroups of suitable
value groups as those that are associated with Galois extensions of degree $p$ with
independent defect, which in turn give rise to definable coarsenings.
\end{abstract}

\subjclass[2010]{12J10, 12J25}
\keywords{deeply ramified fields, defect, definable coarsenings ov valuation rings}

\maketitle

%
%
\section{Introduction}
In this paper, for Galois extensions $(L|K,v)$ of prime degree, as studied in
\cite{CuKuRz,CuKu}, we will discuss definable coarsenings of the valuation rings of
$L$ and $K$, and their applications to the presentation of the K\"ahler differentials
of the extension of the valuaion rings of $(L|K,v)$. As our main interest are these
applications, we will only deal with definability in suitable expansions of the
language $\cL_{\rm val}$ of valued fields, instead of the language of rings.

Moreover, we will be interested in definable coarsenings of both the valuation ring
$\cO_L$ of $v$ on $L$ and the valuation ring $\cO_K$ of $v$ on $K$; however, it is
the former that are important for our applications. Under certain additional
assumptions the coarsenings of $\cO_K$ have already been shown in \cite{KRS} to be
definable in the ring language.

\pars
The notions and notations we will now use will be introduced in Section~\ref{sectprel}.

%
%
\subsection{Coarsenings defined from immediate elements in valued field extensions}
%
Take any valued field extension $(L|K,v)$ and an arbitrary element $z\in L\setminus K$.
For a nonempty subset $M\subseteq K$ we define
\[
v(z-M)\>:=\>\{v(z-c)\, |\, c\in M\}\>\subseteq\> vL\>.
\]
If $M=K$, then the set $v(z-K)\cap vK$ is an initial segment of $vK$. For the
properties of the sets $v(z-K)$, see \cite[Chapter 2.4]{Ku65}.
If $v(z-K)$ has no maximal element, then we call $z$ an \bfind{immediate element} of
the extension $(L|K,v)$. In this case, $v(z-K)\subseteq vK$.

In Section~\ref{sectv(z-K)}, we will define from an immediate element $z$ a coarsening
$\cO_{z-K}$
of the valuation ring $\cO_L$ of $L$ in the language $\cL_{{\rm val},K}$ of valued
fields with a predicate for membership in $K$. This coarsening plays an important role
in our study of Galois defect extensions of prime degree. If $z$ does not lie in the
completion $\widehat{K}$ of $(K,v)$, then $v(z-K)$ is bounded from above and
$-v(z-K)$ is bounded from below in $vK$. In this case,
\begin{equation}
I_{z-K}\>:=\>\{b\in L\mid \exists c\in K:\; vb\geq-v(z-c)\}
\end{equation}
is a (possibly fractional) $\cO_L$-ideal. When we speak of $\cO_L$-ideals, we always
include fractional ideals, that is, $\cO_L$-modules $I\subset L$ for which there is
some $a\in\cO_L$ such that $aI\subseteq\cO_L\,$.

For an $\cO_L$-ideal $I$, its invariance valuation ring $\cO(I)$ (see the definition
in Section~\ref{sectivr}) is the largest of all coarsenings $\cO'$ of $\cO_L$ such that
$I$ is an $\cO'$-ideal. It is definable in the ring language augmented by a predicate
for membership in $I$. We define $\cO_{z-K}$ to be the
invariance valuation ring of $I_{z-K}\,$.


\mn
%
%
\subsection{Galois defect extensions of prime degree}   \label{Gdepdeg}
These extensions have been studied in \cite{KR} and \cite{CuKuRz}.
Take a valued field $(K,v)$ with $\chara Kv=p>0$, and a Galois defect extension
$\cE=(L|K,v)$ of prime degree $p$. For every
$\sigma$ in its Galois group $\Gal (L|K)$, with $\sigma\ne\,$id, we set
\begin{equation}                        \label{Sigsig}
\Sigma_\sigma\>:=\> \left\{ v\left( \left.\frac{\sigma b-b}{b}\right) \right| \,
b\in L^{\times},\,\sigma b\ne b \right\} \>.
\end{equation}
This set is a final segment of $vK$ and independent of the choice of $\sigma$
(see Theorem~\ref{Sigma_E-H_E}); we denote it by
$\Sigma_\cE\,$. It is shown in \cite[Section~2.4]{KR}
that
\begin{equation}
I_\cE\>=\>(b\in L\mid vb\in\Sigma_\cE)\>=\>\{b\in L\mid vb\in\Sigma_\cE\vee b=0\}
\end{equation}
is the unique ramification ideal of $\cE$. We set $\cO_\cE:=\cO(I_\cE)$ and denote its
maximal ideal by $\cM_\cE\,$. We denote by $\cL_{{\rm val},K}$ the language of valued
fields with a predicate for membership in $K$ and prove in Section~\ref{sectde}:
\begin{proposition}                   \label{eldefIOM}
Take a Galois extension $\cE=(L|K,v)$ of prime degree $p$.
\sn
1) The ideals $\cO_\cE$ and $\cM_\cE$ are $\cL_{{\rm val},K}$-definable in $(L,v)$.
\sn
2) If $\chara K=0$, then assume in addition that $K$ contains a primitive $p$-th root
of unity. Then also the ideal $I_\cE$ is $\cL_{{\rm val},K}$-definable in $(L,v)$.
\end{proposition}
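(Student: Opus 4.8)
The plan is to show that each of the three sets—$\cO_\cE$, $\cM_\cE$, and $I_\cE$—is definable by exhibiting an explicit $\cL_{{\rm val},K}$-formula that picks out exactly the elements of that set. The starting point is the description in \eqref{Sigsig} of the final segment $\Sigma_\cE$ together with the fact (Theorem~\ref{Sigma_E-H_E}) that it is independent of the choice of $\sigma$. The key observation is that to quantify over the relevant value data I do not need access to the value group as a separate sort: I can encode statements about values by the usual valuation-divisibility relation $vx \le vy$, which is available in $\cL_{\rm val}$, and I can refer to the subfield $K$ through the membership predicate. The strategy throughout is to fix a single $\sigma \ne \mathrm{id}$ in $\Gal(L|K)$—which is an $\cL_{{\rm val},K}$-definable operation once $K$ is named, since $L|K$ has prime degree—and to translate each defining condition into a first-order statement using $\sigma$, the field operations, and valuation-divisibility.

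First I would treat $I_\cE$ in part 2, since the coarsenings $\cO_\cE$ and $\cM_\cE$ are built from it. By definition $b \in I_\cE$ iff $vb \in \Sigma_\cE$ (or $b=0$), and $\Sigma_\cE$ is the set of values $v((\sigma c - c)/c)$ as $c$ ranges over elements moved by $\sigma$. The natural formula is therefore: $b \in I_\cE$ iff $b = 0$ or there exists $c \in L^\times$ with $\sigma c \ne c$ and $v b \ge v((\sigma c - c)/c)$, which says that $vb$ lies in the final segment generated by these values. The reason part 2 needs the extra hypothesis (characteristic $0$ with a primitive $p$-th root of unity, or $\chara K = p$ in the Artin–Schreier case) is that $\Sigma_\cE$ being a final segment of $vK$—rather than of $vL$—requires the extension to be genuinely ramified in the expected way; one must verify that the quantifier over $c \in L^\times$ does in fact sweep out exactly $\Sigma_\cE$ and nothing larger, which is precisely where Theorem~\ref{Sigma_E-H_E} and the analysis of \cite[Section~2.4]{KR} are invoked.

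For part 1, the cleanest route is to express $\cO_\cE = \cO(I_\cE)$ and $\cM_\cE$ directly in terms of the final segment $\Sigma_\cE$ without first naming $I_\cE$, so that no root-of-unity hypothesis is needed. Since $\cO_\cE$ is the invariance valuation ring of $I_\cE$, an element $x \in L$ lies in $\cO_\cE$ iff multiplication by $x$ preserves the value-set $\Sigma_\cE$, i.e. iff for all $b$ with $vb \in \Sigma_\cE$ one has $v(xb) \in \Sigma_\cE$, and this "for all $b$" can again be rewritten purely in terms of the generating values $v((\sigma c - c)/c)$ and the divisibility relation. Concretely, I expect $x \in \cO_\cE$ to be equivalent to the statement that $vx$ is bounded below by the difference of any two elements of $\Sigma_\cE$ in an appropriate sense—so that adding $vx$ cannot push a value out of the final segment—and this is expressible by a $\forall c \, \forall c'$ formula quantifying over $L^\times$ with $\sigma$ applied. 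The maximal ideal $\cM_\cE$ is then the set of $x$ with $v(xb)$ strictly beyond $\Sigma_\cE$'s lower edge, obtained by replacing the relevant non-strict inequalities with strict ones.

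The main obstacle I anticipate is making the quantifier-translations in part 1 genuinely correct, i.e. proving that the naive "$\forall b \in I_\cE$" definition of the invariance valuation ring collapses to a formula involving only the generators $c$ of $\Sigma_\cE$, and that the resulting formula defines $\cO_\cE$ on the nose rather than some larger or smaller coarsening. This requires a careful argument that the final segment $\Sigma_\cE$ and its "multiplier set" $\{vx : vx + \Sigma_\cE \subseteq \Sigma_\cE\}$ are correctly captured, using the convexity and final-segment structure of $\Sigma_\cE$ so that a single pair of generating values suffices to test membership. A secondary technical point, present already in part 2, is confirming that the quantifiers range correctly once the predicate for $K$ is the only extra symbol: I must check that $\sigma$ is $\cL_{{\rm val},K}$-definable (which it is, as the unique nontrivial element fixing $K$, via the prime-degree structure of the Galois group) so that writing $\sigma c$ inside a formula is legitimate.
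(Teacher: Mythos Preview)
Your overall strategy is sound but contains one genuine gap and one point of confusion, and it differs substantially from the paper's route.

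\textbf{The gap.} You assert that $\sigma$ is $\cL_{{\rm val},K}$-definable ``as the unique nontrivial element fixing $K$''. For $p>2$ there are $p-1$ nontrivial automorphisms and no canonical first-order choice among them, so $\sigma$ as a function is not definable in the way you claim. The repair is straightforward: replace ``$\sigma c$'' by an existentially quantified $K$-conjugate $y\ne c$ of $c$ (definable via the existence of a common monic degree-$p$ polynomial with coefficients in $K$ vanishing at both). Since $\Sigma_\sigma$ is independent of $\sigma$ by Theorem~\ref{Sigma_E-H_E}, quantifying over all conjugates still yields exactly $\Sigma_\cE$.

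\textbf{The confusion.} Your stated reason for the root-of-unity hypothesis in part~2)---that it ensures $\Sigma_\cE$ is a final segment of $vK$ rather than of $vL$---is incorrect. A Galois defect extension of prime degree is immediate, so $vL=vK$, and $\Sigma_\cE$ is always a final segment of this common value group (Theorem~\ref{Sigma_E-H_E}). In fact, once the gap above is repaired, your argument defines $I_\cE$ with no hypothesis beyond $\cE$ being a Galois defect extension of degree $p$---a sharper conclusion than the paper asserts. Relatedly, your attempt to separate part~1) from part~2) by ``not first naming $I_\cE$'' is illusory: the condition $vb\in\Sigma_\cE$ \emph{is} membership in $I_\cE$, so your route to $\cO_\cE$ passes through $I_\cE$ regardless.

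\textbf{Comparison with the paper.} The paper proceeds quite differently and never invokes the Galois action for part~1). Instead it uses the identities $\cO_\cE=\cO_{a-K}$ and $\cM_\cE=\cM_{a-K}$ valid for every $a\in L\setminus K$ (Theorem~\ref{Sigma_E-H_E}~part~2)), reducing to the visibly $\cL_{{\rm val},K}$-definable ideal $I_{a-K}=\{b\in L\mid\exists c\in K:\ vb\ge -v(a-c)\}$; independence from the choice of $a$ lets the parameter be quantified away. For part~2) the paper does not use conjugates at all but appeals to the explicit descriptions $\Sigma_\cE=-v(\vartheta-K)$ in the Artin--Schreier case and $\Sigma_\cE=\frac{1}{p-1}vp-v(\eta-K)$ in the Kummer case, which is precisely where the root-of-unity hypothesis enters. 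Thus the paper trades a uniform conjugate argument for a case split on explicit generators; your approach, once the conjugate relation is correctly written out, is more uniform and apparently yields a stronger definability statement, while the paper's explicit formulas are the ones actually used in the applications that follow.
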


A main aim of this paper is to describe the role the ideals $I_\cE\,$, $\cO_\cE$ and
$\cM_\cE$ play in the description of the structure of Artin-Schreier extensions and
Kummer defect extensions of prime degree. This will be done in Section~\ref{sectde}.

\pars
We say that $\cE$ has \bfind{independent defect} if
\begin{equation}                                     \label{indepdefnew}
I_\cE\>=\>\cM_\cE \quad\mbox{ and $\cM_\cE$ is a nonprincipal $\cO_\cE$-ideal,}
\end{equation}
otherwise we will say that $\cE$ has \bfind{dependent defect}. We will show in
Section~\ref{sectde} that in the case of Artin-Schreier extensions and
Kummer extensions of prime degree, this definition is equivalent to the one given in
\cite{CuKuRz}.

\pars
Let us give an example for the importance of independent defect.
A valued field $(K,v)$ is called a \bfind{roughly deeply ramified field}, or in short
an \bfind{rdr field}, if the following conditions hold:
\sn
{\bf (DRvp)} if $\chara Kv=p>0$, then $vp$ is not the smallest positive element
in the value group $vK$,
\sn
{\bf (DRvr)} if $\chara Kv=p>0$, then $\cO_K/p\cO_K$ is semiperfect if $\chara K=0$,
and the completion $\widehat K$ of $(K,v)$ is perfect if $\chara K=p$.

\pars
The following is a consequence of \cite[Theorem 1.10 1)]{KR}:
\begin{theorem}
Assume that $(K,v)$ is a roughly deeply ramified field. Then every Galois defect
extension $\cE=(L|K,v)$ of prime degree $p=\chara Kv>0$ has independent defect.
\end{theorem}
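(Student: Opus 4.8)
Since $\cE$ is a defect extension of prime degree $p$, its defect equals $p$, forcing $e=f=1$; hence $(L|K,v)$ is immediate and $vL=vK$, so the entire argument can be carried out inside $vK$. The plan is to translate the definition (\ref{indepdefnew}) of independent defect into two conditions on the final segment $\Sigma_\cE\subseteq vK$, and then to verify these conditions from the rdr hypotheses by invoking \cite[Theorem 1.10 1)]{KR}.

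First I would unwind $\cO_\cE=\cO(I_\cE)$ and $\cM_\cE$ in terms of $\Sigma_\cE$. Since $I_\cE=(b\in L\mid vb\in\Sigma_\cE)$ is determined by the final segment $\Sigma_\cE$, its invariance valuation ring $\cO_\cE$ corresponds to the largest convex subgroup $H_\cE:=\{\gamma\in vK\mid \gamma+\Sigma_\cE=\Sigma_\cE\}$ stabilizing $\Sigma_\cE$; correspondingly $\cM_\cE=\{b\in L\mid vb>H_\cE\}$, the set of elements whose value exceeds every element of $H_\cE$. With this description, the first half of (\ref{indepdefnew}), namely $I_\cE=\cM_\cE$, becomes the assertion that $\Sigma_\cE$ is exactly the set $(vK)^{>H_\cE}$ of all elements of $vK$ lying above $H_\cE$ (the inclusion $I_\cE\subseteq\cM_\cE$ always holds, since $I_\cE$ is a proper ideal of the local ring $\cO_\cE$). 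The second half, nonprincipality of $\cM_\cE$, becomes the assertion that $\Sigma_\cE$ has no least element, equivalently that $vK/H_\cE$ has no smallest positive element.

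The key input is \cite[Theorem 1.10 1)]{KR}. Under the rdr conditions (DRvp) and (DRvr), it yields the structural information about Galois defect extensions of degree $p$ that I would use to establish both displayed conditions. For nonprincipality, condition (DRvp) — that $vp$ is not the least positive element of $vK$ in the mixed-characteristic case — is precisely the density statement ruling out a smallest positive element of $vK/H_\cE$, since in the Artin-Schreier and Kummer settings the cut determining $H_\cE$ is governed by $vp$; in the case $\chara K=p$ the perfectness of $\widehat K$ supplied by (DRvr) plays the analogous role. For the equality $\Sigma_\cE=(vK)^{>H_\cE}$, the deep-ramification condition (DRvr) forces $\Sigma_\cE$ to fill up the whole upper cut above $H_\cE$, leaving no gap between $H_\cE$ and $\Sigma_\cE$.

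The main obstacle I anticipate is the bookkeeping required to pass between the formulation of independent defect recorded in \cite{KR} (phrased in the language of \cite{CuKuRz}) and the invariance-valuation-ring formulation (\ref{indepdefnew}) used here; concretely, one must check that the conclusion of \cite[Theorem 1.10 1)]{KR} really delivers \emph{both} $\Sigma_\cE=(vK)^{>H_\cE}$ and the absence of a least element of $\Sigma_\cE$, rather than only one of them. Once this correspondence is in place, the theorem follows at once.
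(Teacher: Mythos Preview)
Your proposal is correct and matches the paper's approach: the theorem is stated there simply as a consequence of \cite[Theorem 1.10 1)]{KR}, with the translation between the \cite{KR} formulation of independent defect and condition (\ref{indepdefnew}) carried out exactly along the lines of your first two paragraphs (this equivalence is spelled out in Section~\ref{sectde}). Your heuristic attribution of the two halves of (\ref{indepdefnew}) separately to (DRvp) and (DRvr) is neither needed nor quite accurate, but since you correctly identify the cited theorem as the sole substantive input and the definition-matching as the only real work, the plan is sound.
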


%
%
\subsection{Deeply ramified fields and K\"ahler differentials}
We call $(K,v)$ a \bfind{deeply ramified field} if it satisfies condition {\bf (DRvr)}
together with
\sn
{\bf (DRvg)} whenever $\Gamma_1\subsetneq\Gamma_2$ are convex subgroups of the value
group $vK$, then $\Gamma_2/\Gamma_1$ is not isomorphic to $\Z$  (that is, no
archimedean component of $vK$ is discrete).
\sn
Every perfect valued field of positive characteristic $p$ and every perfectoid field is
a deeply ramified field with $p$-divisible value group. Every deeply ramified field is
an rdr field.

\pars
A theorem of Gabber and Ramero uses K\"ahler differentials, that is, modules of
relative differentials, to characterize deeply ramified fields
(cf.\ \cite[Theorem~6.6.12 (vi)]{GR} and \cite[Theorem 1.2]{CuKu}). When $A$ is a ring
and $B$ is an $A$-algebra, then we denote by $\Omega_{B|A}$ the K\"ahler differentials
of $B|A$ (see Section~\ref{sectKd}). Given a valued field $(K,v)$, we
denote by $K\sep$ the separable algebraic closure and extend $v$ from $K$ to $K\sep$.
The following result does not depend on the choice of the extension of $v$ since all
of the possible extensions are conjugate.
\begin{theorem}             \label{GRthm}
For a valued field $(K,v)$,
\begin{equation}                         \label{GRdefdr}
\Omega_{\cO_{K\sep}|\cO_K} \>=\> 0
\end{equation}
holds if and only if $(K,v)$ is a deeply ramified field.
\end{theorem}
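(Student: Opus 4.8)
The plan is to prove the equivalence by relating the vanishing of $\Omega_{\cO_{K\sep}|\cO_K}$ to the vanishing of $\Omega_{\cO_L|\cO_K}$ for all finite subextensions $L$ of $K\sep|K$, and then to reduce the latter to a statement about Galois extensions of prime degree, where the machinery of the ramification ideals $I_\cE$, the invariance valuation rings $\cO_\cE$, and independent defect developed earlier in the paper becomes available. The first step is to recall that $K\sep$ is the direct limit of its finite subextensions $L$, and that Kähler differentials commute with direct limits; hence $\Omega_{\cO_{K\sep}|\cO_K}=\varinjlim \Omega_{\cO_L|\cO_K}$, so that $\Omega_{\cO_{K\sep}|\cO_K}=0$ holds if and only if the transition maps eventually kill every element, which one can repackage as a condition on each finite step. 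I would then filter each finite extension by a tower in which successive steps are Galois of prime degree (using that the absolute Galois group is pro-finite and solvable obstructions can be handled degree by degree), so the problem localizes to analyzing $\Omega_{\cO_L|\cO_K}$ for a single Galois extension $\cE=(L|K,v)$ of prime degree $p$.

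Next I would carry out the local analysis of a single prime-degree Galois step. The key is the presentation of $\Omega_{\cO_L|\cO_K}$ in terms of the ramification ideal: for an Artin-Schreier or Kummer defect extension the module $\Omega_{\cO_L|\cO_K}$ is cyclic, generated by $d\vartheta$ for a suitable generator $\vartheta$ of $L|K$, and its annihilator is governed by the ideal $I_\cE$ (this is exactly the kind of presentation the abstract promises and that Section~\ref{sectde} supplies). Concretely, one shows that the generating differential is annihilated precisely by the ramification ideal, so $\Omega_{\cO_L|\cO_K}\isom\cO_L/(\text{annihilator})$, and this quotient vanishes if and only if the annihilator is all of $\cO_L$, equivalently the extension is ``unramified enough'' that $I_\cE=\cO_L$. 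The deeply ramified conditions {\bf (DRvr)} and {\bf (DRvg)} are exactly what forces, in every such prime-degree step, the ramification ideal to fill out $\cO_L$ (or, for the defect extensions with independent defect, forces the relevant differential to vanish in the limit); conversely, failure of {\bf (DRvr)} or {\bf (DRvg)} produces a concrete prime-degree step whose differential module is nonzero and survives in the direct limit.

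For the two implications I would argue as follows. For the direction ``deeply ramified $\Rightarrow$ $\Omega=0$'', assume {\bf (DRvr)} and {\bf (DRvg)} and take an arbitrary finite separable $L|K$; refining to a prime-degree tower, I would show at each step that the generating differential $d\vartheta$ already maps to zero after a further finite extension, using that a deeply ramified field has $p$-divisible-enough value group (from {\bf (DRvg)}, no archimedean component is discrete) and the perfectness/semiperfectness from {\bf (DRvr)} to extract $p$-th roots that trivialize the differential in the colimit. Since every element of $\Omega_{\cO_{K\sep}|\cO_K}$ comes from some finite level and is killed higher up, the colimit vanishes. For the converse ``$\Omega=0$ $\Rightarrow$ deeply ramified'', I would prove the contrapositive: if {\bf (DRvg)} fails, pick convex subgroups $\Gamma_1\subsetneq\Gamma_2$ with $\Gamma_2/\Gamma_1\isom\Z$ and build a prime-degree extension (e.g.\ an Artin-Schreier or Kummer extension tied to that discrete archimedean component) whose ramification ideal is principal, so that by the presentation $\Omega_{\cO_L|\cO_K}\ne0$ with a nonzero image persisting in $\Omega_{\cO_{K\sep}|\cO_K}$; if instead {\bf (DRvr)} fails, the failure of (semi)perfectness similarly yields a prime-degree step with nonvanishing, stable differential.

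The main obstacle I expect is the direction ``deeply ramified $\Rightarrow$ $\Omega=0$'', and specifically controlling the \emph{colimit}: it is not enough to bound $\Omega_{\cO_L|\cO_K}$ at a fixed finite level (these modules are generally nonzero even for deeply ramified $K$), one must show that \emph{every} element is annihilated after passing to a larger finite extension. This requires a uniform extraction of approximate $p$-th roots of generators, which is exactly where the deeply ramified hypotheses do their work, and where the interplay between the value-group condition {\bf (DRvg)} and the residue/completion condition {\bf (DRvr)} must be combined carefully across the tower. A secondary technical point is the reduction to prime-degree steps in mixed characteristic, where one must ensure a primitive $p$-th root of unity is available (as in Proposition~\ref{eldefIOM}) so that the Kummer-type presentation of $\Omega_{\cO_L|\cO_K}$ via $I_\cE$ applies; handling the case $\chara K=0$ without that root of unity requires a preliminary harmless finite base extension that does not affect the deeply ramified property.
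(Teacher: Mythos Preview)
The paper does not prove this theorem. It is stated in the introduction as a known result of Gabber and Ramero, with references to \cite[Theorem~6.6.12 (vi)]{GR} and \cite[Theorem 1.2]{CuKu}; the paper remarks that ``a main goal of the papers \cite{CuKuRz,CuKu}'' is to give an alternative proof via the computation of $\Omega_{\cO_L|\cO_K}$ for prime-degree Galois extensions, but that programme is carried out in those references, not in the present paper. So there is no ``paper's own proof'' to compare against.

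Your sketch is broadly in the spirit of that alternative approach, but several of the concrete claims are wrong as stated. The reduction to towers of Galois steps of prime degree is not available for arbitrary finite separable extensions; the actual route (as indicated in Section~\ref{sectpacse}) goes through the absolute ramification field $K^r$, over which $K\sep$ is a $p$-extension, with the tame part $K^r|K$ handled separately. Your presentation of $\Omega_{\cO_L|\cO_K}$ is also off: for a prime-degree Galois defect extension the paper quotes $\Omega_{\cO_L|\cO_K}\cong I_\cE/I_\cE^p$, not a cyclic module $\cO_L/(\text{annihilator})$, and the vanishing criterion is that $\cE$ has \emph{independent defect} (i.e.\ $I_\cE=\cM_\cE$ with $\cM_\cE$ nonprincipal), not ``$I_\cE=\cO_L$'', which is impossible since $I_\cE\subseteq\cM_L$ by definition. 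For the defectless case one has instead $\Omega_{\cO_L|\cO_K}\cong I_\cE\cM_\cE/(I_\cE\cM_\cE)^p$ or $\cM_\cE/\cM_\cE^q$, and the vanishing depends on whether $\cM_\cE$ is principal and whether $q\in\cM_\cE$, which is where conditions {\bf (DRvg)} and {\bf (DRvp)} enter.
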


A main goal of the papers \cite{CuKuRz,CuKu} is to compute the K\"ahler differentials of
Galois extensions $\cE=(L|K,v)$ of prime degree of valued fields and use this to give an alternative
proof of Theorem~\ref{GRthm}. According to \cite[Theorem 1.1]{CuKuRz}, these K\"ahler
differentials can be represented in the form
\begin{equation}                         \label{UV}
\Omega_{\cO_L|\cO_K} \>\simeq\> U/UV
\end{equation}
where $U$ and $V$ are certain $\cO_L$-ideals. Their
computation in the case of defect extensions $\cE$ is dealt with in \cite{CuKuRz} and
we will state the results in Section~\ref{sectde}.

%
%
%


%
%
\subsection{Defectless Galois extensions of prime degree}
The paper \cite{CuKu} is devoted to the case of defectless extensions $\cE$; in
Section~\ref{sectdl} we discuss its results, as well as the
$\cL_{{\rm val},K}$-definition and the role of the valuation
ring $\cO_\cE$ and its maximal ideal $\cM_\cE\,$. The interesting case is the one of
Galois extensions $\cE=(L|K,v)$ of prime degree $q=(vL:vK)$ (which this time is not
necessarily equal to $\chara Kv$). In order to compute the ideals $U$ and $V$
appearing in
(\ref{UV}) we determined in \cite{CuKu} a presentation of $\cO_L$ as a union over a
chain of simple ring extensions of $\cO_K\,$. It depends on a distinction of three
ways in which $vK$ extends to $vL$, and as a byproduct we obtain definitions of
the valuation ideal $\cO_\cE$ and $\cM_\cE\,$. We will show in Section~\ref{sectimp}
that the ideal $\cM_\cE$ is necessary for the presentation of $U$
and $V$, and also for the computation of the annihilator of $\Omega_{\cO_L|\cO_K}$.

In Section~\ref{sectdefI_E} we will present $\cL_{{\rm val},K}$-definitions of the
ramification ideal $I_\cE$ for the defectless wildly ramified case.

%
%
\subsection{Predescribed associated convex subgroups}
For Galois defect extensions $\cE=(L|K,v)$ of prime degree, we have already defined
in Section~\ref{Gdepdeg} the valuation rings $\cO_\cE$. They correspond to convex
subgroups of $vL$ via the definition
\[
H_\cE\>:=\> v\cO_\cE^\times\>=\> v\cO_\cE \cap -v\cO_\cE\>.
\]
Since the extension $\cE$ is immediate, $H_\cE$ is a convex subgroup of both $vK$ and
$vL$.

Using this definition,
we can modify the original definition for independent defect given in
\cite{KR} in the following way: $\cE$ has \bfind{independent defect} if
\begin{equation}                                     \label{indepdef}
\Sigma_{\cE}\>=\> \{\alpha\in vK\mid \alpha >H_\cE\}\>\mbox{ and $vK/H_\cE$ has no smallest positive element;}
\end{equation}
otherwise we will say that $\cE$ has \bfind{dependent defect}. If $(K,v)$ has
rank 1 (i.e., its value group is order isomorphic to a subgroup of $\R$), then
condition (\ref{indepdef}) just means that $\Sigma_{\cE}$ consists of all positive
elements in $vK$. In the case of independent defect, we will call $H_\cE$ the
\bfind{convex subgroup associated with} $\cE$.
In order not to overload our sentences, we will write ``associated convex subgroup''
for ``convex subgroup associated with a Galois defect extension of prime degree''.

For an ordered abelian group $\Gamma$, denote by $\cC(\Gamma)$ the chain of its proper
convex subgroups, and by $\cC_{\rm pr}(\Gamma)$ the chain of its proper principal
convex subgroups. If $H$ is a convex subgroup of $\Gamma$ that is the smallest
among all convex subgroups that contain a given element $\gamma\in\Gamma$, then we
call it a \bfind{principal convex subgroup}, and if it is largest among all convex
subgroups that do not contain a given element $\gamma\in\Gamma$, then we call it a
\bfind{subprincipal convex subgroup}. A subprincipal convex subgroup may
or may not be principal. In Section~\ref{sectpacse} we will prove:
\begin{theorem}                         \label{prescracs}
Let $p$ be a prime and take any totally ordered set $I$. Then there exists an ordered
abelian group $\Gamma$ with $\cC_{\rm pr}(\Gamma)$ order isomorphic to $I$ such that
for any subset $\cC^{\rm sp}\subseteq\cC$ containing only subprincipal convex
subgroups, the following statements hold.
\sn
1) There exists a perfect henselian valued field of characteristic $p$
with value group $\Gamma$ for which the associated convex subgroups are exactly the
elements of $\cC^{\rm sp}$.

\sn
2) Assume in addition that $\Gamma$ has a largest proper convex subgroup. Then there
exists a henselian deeply ramified field of characteristic $0$ and residue
characteristic $p$ with value group $\Gamma$ for which the associated convex
subgroups are exactly the elements of $\cC^{\rm sp}$.
\end{theorem}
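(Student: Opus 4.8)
The plan is to separate the argument into an ordered-group part and a field-theoretic part, and to exploit the fact that both classes of fields in the statement are deeply ramified. By the Theorem quoted above (a consequence of \cite[Theorem 1.10 1)]{KR}), together with the remark that every perfect valued field of characteristic $p$ and every deeply ramified field is an rdr field, every Galois defect extension $\cE$ of prime degree $p$ over such a field automatically has independent defect. Consequently the associated convex subgroup $H_\cE$ is defined for \emph{all} such extensions, and the task reduces to controlling, for a single base field, the exact set of convex subgroups arising as the breadths $\Sigma_\cE=\{\alpha\in vK\mid\alpha>H_\cE\}$ of its degree-$p$ defect extensions.

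First I would construct the group. Writing $I^{*}$ for $I$ equipped with the reversed order, I take $\Gamma$ to be the Hahn product $\coprod_{\delta\in I^{*}}\Q$, with natural valuation sending a nonzero $f$ to $\min(\mathrm{supp}\,f)$. Its archimedean rank is then $I^{*}$; the principal convex subgroups are the sets $C_\delta=\{f\mid\mathrm{supp}\,f\subseteq\{\eta\ge\delta\}\}$, and $\delta\mapsto C_\delta$ is an order isomorphism $\cC_{\rm pr}(\Gamma)\cong I$. The proper convex subgroups correspond to the final segments of $I^{*}$, such a segment being subprincipal exactly when it has the form $C_{>\delta}=\{f\mid\mathrm{supp}\,f\subseteq\{\eta>\delta\}\}$. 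Two features are built in by the choice of $\Q$-components: $\Gamma$ is $p$-divisible (needed since both field types have $p$-divisible value group), and for every subprincipal $H=C_{>\delta}$ the quotient $vK/H$ has dense top archimedean component $\Q$, hence no smallest positive element — so the second clause of the independent-defect condition (\ref{indepdef}) holds automatically for every subprincipal $H$. In part 2, ``$\Gamma$ has a largest proper convex subgroup'' amounts to $I^{*}$ having a least element, i.e.\ a distinguished top archimedean class, which is where I will place $vp$.

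Next comes the field construction. In both cases I start from the maximal (hence defectless) Hahn field over $\Gamma$ — namely $\F_p((\Gamma))$ in characteristic $p$, and a suitable maximal mixed-characteristic Hahn-type field with $vp$ in the top component in characteristic $0$ — and carve out a henselian subfield $K$ with value group $\Gamma$ and the prescribed residue field by a transfinite closure process. For each chosen $H\in\cC^{\rm sp}$ I deliberately leave a gap: a pseudo-Cauchy sequence in $K$ of breadth $H$ with no limit in $K$ whose Artin-Schreier solution (Kummer solution, after adjoining a primitive $p$-th root of unity, in characteristic $0$) lies in the ambient field. By the machinery of Section~\ref{sectv(z-K)} this produces an immediate element $z$ with $v(z-K)$ a cut at $H$, hence a degree-$p$ defect extension $\cE$ with $\Sigma_\cE=\{\alpha>H\}$ and associated convex subgroup $H$. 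At every subprincipal level $H\notin\cC^{\rm sp}$ I instead close $K$ up to maximality, so that no immediate element of breadth $H$ survives and no degree-$p$ defect extension with associated subgroup $H$ exists. In characteristic $0$, placing $vp$ in the top archimedean class secures {\bf (DRvp)}, the divisible components give {\bf (DRvg)}, and the closure is arranged to make $\cO_K/p\cO_K$ semiperfect, yielding {\bf (DRvr)}; thus $K$ is deeply ramified as required.

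The main obstacle is the simultaneous, exact control: I must open gaps at precisely the prescribed $H\in\cC^{\rm sp}$ while enforcing maximality at all other subprincipal levels, and the two requirements interact, because filling or opening a gap at one level, or applying henselization and (in characteristic $p$) the perfect-hull operation, can in principle alter the breadths available at neighbouring levels. I would handle this by an induction over the chain of convex subgroups in which, at each stage, the immediate extensions that may be adjoined are restricted to those not affecting the designated gaps, and I would verify that henselization, perfection, and completion at the unwanted levels are immediate extensions that neither create a new breadth in the complement of $\cC^{\rm sp}$ nor destroy the engineered ones. The closing verification that the resulting set of associated convex subgroups is exactly $\cC^{\rm sp}$ — using that deep ramification forces all these defects to be independent and that the independent-defect condition pins $H_\cE$ down uniquely via $\Sigma_\cE$ — is where the bookkeeping from the earlier sections will be essential.
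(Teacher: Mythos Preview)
Your overall plan --- choose $\Gamma$ with the right convex-subgroup structure, then realize it over a deeply ramified field while prescribing exactly which subprincipal $H$ arise as $H_\cE$ --- matches the paper's. But your field construction is quite different from the paper's, and the step you yourself identify as ``the main obstacle'' is not actually carried out.

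The paper does not carve a subfield out of a Hahn field. For part 1) it builds \emph{up}: $K_1$ is the perfect hull of the henselized rational function field $k(t_i\mid i\in I)^h$, with $vt_i$ generating the $i$-th archimedean component. Explicit Artin--Schreier generators $\vartheta_i$ with $\vartheta_i^p-\vartheta_i=1/t_i$ are shown (Lemmas~\ref{l1} and~\ref{l2}) to give defect extensions with associated subgroup exactly $H_i$, so that over $K_1$ \emph{every} subprincipal $H_i$ already arises. The task is then to \emph{remove} the unwanted ones. This is done by a Zorn-type maximality argument: take $K_2$ maximal among algebraic extensions of $K_1$ preserving $vK_1$ and the shape $K_2v_j=K_2v_jw_j(t_j)^{1/p^\infty}$ for all wanted $j\in J$. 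If an unwanted $H_i$ still arose, one lifts a maximal purely wild extension of the residue field $K_2v_i$; by Lemma~\ref{mpwext} this makes $(K_3v_i,w_i)$ tame, hence defectless, killing $H_i$. The genuinely delicate point --- that this enlargement does not disturb the wanted $H_j$ --- is handled by explicit degree computations in the two cases $j>i$ and $j<i$, and it is this computation that contradicts the maximality of $K_2$. Part 2) then reduces to part 1) via Proposition~\ref{prepmixcase}, placing the characteristic-$p$ field from part 1) as the residue field of a rank-one mixed-characteristic deeply ramified field and treating the top convex subgroup separately.

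Your proposal, by contrast, offers no mechanism for the simultaneous control beyond ``an induction over the chain of convex subgroups'' with unspecified restrictions on which immediate extensions may be adjoined. The difficulty is genuine: filling pseudo-Cauchy sequences of one breadth, or applying henselization and perfection, are global operations that can create or destroy immediate elements of other breadths. The paper's device --- act on a single residue field $K_2v_i$ via a maximal purely wild extension, then lift --- is exactly what localizes the intervention and makes the interaction checkable; nothing in your sketch plays an analogous role. Without such a mechanism the transfinite closure is a hope rather than a proof.
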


In an ordered abelian group with only finitely many proper convex subgroups, each of
them is subprincipal. Therefore, the next result follows immediately from our
theorem:
\begin{corollary}
Let $p$ be a prime and take any finite totally ordered set $I$. Then there exists an
ordered abelian group $\Gamma$ with $\cC_{\rm pr}(\Gamma)$ order isomorphic to $I$
such that for any set $\cH$ of proper convex subgroups of $\Gamma$, there exists a
perfect henselian valued field of characteristic $p$ as well as a henselian deeply
ramified field of characteristic $0$ and residue characteristic $p$
with value group $\Gamma$ for which the associated convex subgroups are exactly the
elements of $\cH$.
\end{corollary}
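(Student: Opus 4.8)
The plan is to derive the corollary directly from Theorem~\ref{prescracs} by observing that, once $I$ is finite, the group $\Gamma$ produced by the theorem has only finitely many proper convex subgroups, and that in such a group every proper convex subgroup is automatically subprincipal. First I would apply Theorem~\ref{prescracs} to the prime $p$ and the finite totally ordered set $I$ to obtain an ordered abelian group $\Gamma$ with $\cC_{\rm pr}(\Gamma)$ order isomorphic to $I$. Everything then reduces to showing that an arbitrary set $\cH$ of proper convex subgroups of $\Gamma$ may be taken as the set $\cC^{\rm sp}$ appearing in the theorem; for this it suffices to prove that every proper convex subgroup of $\Gamma$ is subprincipal.

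The key intermediate step is to upgrade the finiteness of $\cC_{\rm pr}(\Gamma)$ to finiteness of the full chain $\cC(\Gamma)$ of proper convex subgroups, and I would argue that the two chains in fact coincide. Recall that the convex subgroups of an ordered abelian group form a chain under inclusion, and that every convex subgroup $H$ equals the union $\bigcup_{\gamma\in H}C(\gamma)$ of the principal convex subgroups $C(\gamma)$ generated by its elements. Since $\cC_{\rm pr}(\Gamma)$ is order isomorphic to the finite set $I$, the family $\{C(\gamma)\mid\gamma\in H\}$ is a finite subchain, so its union is attained, i.e.\ $H=C(\gamma_0)$ for some $\gamma_0\in H$; hence $H$ is principal. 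Thus $\cC(\Gamma)=\cC_{\rm pr}(\Gamma)$ is finite.

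With $\cC(\Gamma)$ finite, I would invoke the observation recorded before the corollary: given any proper convex subgroup $H$, choose an element $\gamma$ lying in the smallest convex subgroup properly containing $H$ (or in $\Gamma$ itself, if $H$ is the largest proper convex subgroup) but not in $H$. Because the chain is finite, the convex subgroups not containing $\gamma$ are exactly those contained in $H$, so $H$ is the largest convex subgroup not containing $\gamma$; that is, $H$ is subprincipal. Consequently any set $\cH$ of proper convex subgroups consists entirely of subprincipal ones, and setting $\cC^{\rm sp}:=\cH$ makes both parts of Theorem~\ref{prescracs} applicable. Part~1 then yields the desired perfect henselian field of characteristic $p$; for part~2 I would note that a nonempty finite chain has a greatest element, so $\Gamma$ has a largest proper convex subgroup and the additional hypothesis of the theorem is met, yielding the henselian deeply ramified field of characteristic $0$ and residue characteristic $p$.

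The computations here are genuinely immediate, so the only place demanding care is the passage from finitely many principal convex subgroups to finitely many convex subgroups: one must rule out a nonprincipal convex subgroup arising as a strictly increasing union, and this is precisely where the finiteness of $I$ is used. Once that chain-theoretic point is settled, the corollary is a formal consequence of Theorem~\ref{prescracs}, with no further field-theoretic input required.
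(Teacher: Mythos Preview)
Your argument is correct and follows the same route as the paper: the paper simply records that in an ordered abelian group with only finitely many proper convex subgroups every such subgroup is subprincipal, and then invokes Theorem~\ref{prescracs}. You supply the one detail the paper leaves implicit, namely the passage from finiteness of $\cC_{\rm pr}(\Gamma)$ to finiteness of $\cC(\Gamma)$, and you also make explicit why the extra hypothesis of part~2 (existence of a largest proper convex subgroup) is satisfied.
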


%
%
\section{Preliminaries}                   \label{sectprel}
%
%
%
\subsection{Notation}                            \label{sectnot}
For a valued field $(K,v)$, we denote the value group by $vK$,  the residue field by
$Kv$,  the valuation ring by $\cO_K\,$, and its maximal ideal by $\cM_K\,$. We set
$vK^{>0}:=\{\alpha\in vK\mid \alpha>0\}$ and $vK^{<0}:=\{\alpha\in vK\mid \alpha<0\}$.
Throughout, we will use the convention that $v0=\infty>\alpha$ for all $\alpha\in vK$.

By $(L|K,v)$ we denote a field extension $L|K$ with a valuation $v$ on $L$, where $K$
is endowed with the restriction of $v$. In this case, there are induced embeddings
of $vK$ in $vL$ and of $Kv$ in $Lv$. The extension $(L|K,v)$ is called
\bfind{immediate} if these embeddings are onto. In this case, if $z\in L\setminus K$,
then $v(z-K)$ has no maximal element, and therefore $z$ is an immediate element of
$(L|K,v)$; this follows from \cite[Lemma 2.9 2)]{Ku65} and the fact that each
subextension of an immediate extension is immediate.

We call $(L|K,v)$ \bfind{unibranched} if the valuation $v$ has only one extension
from $K$ to $L$. A valued field is \bfind{henselian} if and only if all of its
algebraic extensions are unibranched.

If $(L|K,v)$ is a finite unibranched extension, then by the Lemma of Ostrowski
(\cite[Corollary to Theorem 25, Section G, p.\ 78]{ZS}),
\begin{equation}                    \label{feuniq}
[L:K]\>=\> \tilde{p}^{\nu }\cdot(vL:vK)[Lv:Kv]\>,
\end{equation}
where $\nu$ is a non-negative integer and $\tilde{p}$ the
\bfind{characteristic exponent} of $Kv$, that is, $\tilde{p}=\chara Kv$ if it is
positive and $\tilde{p}=1$ otherwise. The factor $d(L|K,v):=\tilde{p}^{\nu }$ is
the \bfind{defect} of the extension $(L|K,v)$.
If $d(L|K,v)=1$, then the extension $(L|K,v)$ is called \bfind{defectless};
otherwise we call it a \bfind{defect extension}. A henselian field $(K,v)$ is a
\bfind{defectless field} if every finite unibranched extension of $(K,v)$ is
defectless; note that this is always the case if $\chara Kv=0$.

%
%
\subsection{Ramification ideals}
If $L|K$ is Galois, then we denote its Galois group by $\Gal L|K$. In this case,
a nontrivial $\cO_L$-ideal contained in $\cM_L$ is called a
\bfind{ramification ideal} of $(L|K,v)$ if it is of the form
\begin{equation}                     \label{genI}
\left(\frac{\sigma b -b}{b}\>\left|\;\> \sigma\in H\,,\>b\in L^\times\right.
\right)
\end{equation}
for some subgroup $H$ of $\Gal L|K$. For more information on ramification ideals, see
\cite{KuTopI}.

%
%
\subsection{K\"ahler differentials}               \label{sectKd}
Assume that $A$ is a ring and $B$ is an $A$-algebra.  Then $\Omega_{B|A}$ denotes the
module of relative differentials  (K\"ahler differentials),  that is, the
$B$-module for which there is a universal derivation
\[
d:\>B\>\rightarrow \>\Omega_{B|A}
\]
such that for every $B$-module $M$  and derivation $\delta: B\rightarrow M$
there is a unique $B$-module homomorphism
\[
\phi:\>\Omega_{B|A}\>\rightarrow\> M
\]
such that $\delta=\phi \circ d$.

\subsection{Invariance group and invariance valuation ring}         \label{sectivr}
\mbox{ }\sn
Take any valued field $(L,v)$ and $\cO_L$-ideal $I$. We set
\begin{equation}                            \label{O(I)}
\cO(I)\>:=\>\{b\in L\mid bI\subseteq I\}
\;\;\;\mbox{\ \ and\ \ }\;\;\;
\cM(I) \>=\>\{b\in L\mid bI\subsetneq I\}\>.
\end{equation}
We call $\cO(I)$ the \bfind{invariance valuation ring} of $I$.
The following is part of \cite[Theorem 3.6]{Kucuts}:
\begin{proposition}                             \label{Ovr}
For every $\cO_L$-ideal $I$, $\,\cO(I)$ is a valuation ring of $L$ containing
$\cO_L\,$, with maximal ideal $\cM(I)$, which is a prime $\cO_L$-ideal. It is the
largest of all valuation rings $\cO$ of $L$ containing $\cO_L$ for which $I$ is an
$\cO$-ideal.
\end{proposition}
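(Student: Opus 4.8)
The plan is to establish the four assertions—subring/valuation-ring structure, the maximal ideal, primality, and maximality—in turn, using throughout that $\cO(I)$ is an \emph{overring} of the valuation ring $\cO_L$. First I would note that $\cO(I)$ is a subring of $L$ containing $\cO_L$: since $I$ is an $\cO_L$-ideal we have $aI\subseteq I$ for every $a\in\cO_L$, so $\cO_L\subseteq\cO(I)$, while closure under addition and multiplication is immediate from $\cO(I)=\{b\mid bI\subseteq I\}$ and the fact that $I$ is an additive group. That $\cO(I)$ is a valuation ring of $L$ then follows directly from the valuation-ring property of $\cO_L$: for any $b\in L^\times$, either $b\in\cO_L$ or $b^{-1}\in\cO_L$, and as $\cO_L\subseteq\cO(I)$ this gives $b\in\cO(I)$ or $b^{-1}\in\cO(I)$.

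The key computation is the determination of the units of $\cO(I)$. I would show that $b\in\cO(I)^\times$ if and only if $bI=I$: indeed, $b,b^{-1}\in\cO(I)$ give $bI\subseteq I$ and $I\subseteq bI$ (the latter from $b^{-1}I\subseteq I$), whence $bI=I$, and the converse is clear. Consequently the set of nonunits of $\cO(I)$ is exactly $\{b\in\cO(I)\mid bI\neq I\}=\{b\mid bI\subsetneq I\}=\cM(I)$, which is therefore the maximal ideal of the valuation ring $\cO(I)$.

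Next, to see that $\cM(I)$ is a prime $\cO_L$-ideal, I would first locate $\cM(I)$ inside $\cO_L$ via the standard overring structure. If $b\in\cM(I)$ satisfied $b\notin\cM_L$, then either $b\notin\cO_L$, whence $b^{-1}\in\cM_L\subseteq\cO_L\subseteq\cO(I)$, or $b\in\cO_L^\times$, whence again $b^{-1}\in\cO_L\subseteq\cO(I)$; in both cases $b$ would be a unit of $\cO(I)$, contradicting $b\in\cM(I)$. Hence $\cM(I)\subseteq\cM_L\subseteq\cO_L$, so $\cM(I)$ is an additive subgroup of $\cO_L$ with $\cO_L\,\cM(I)\subseteq\cO(I)\,\cM(I)\subseteq\cM(I)$, i.e.\ an ideal of $\cO_L$; and primality passes from $\cO(I)$ to $\cO_L$ because $a,b\in\cO_L\subseteq\cO(I)$ and $ab\in\cM(I)$ force $a\in\cM(I)$ or $b\in\cM(I)$ by primality of $\cM(I)$ in $\cO(I)$.

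Finally, maximality is formal: $I$ is an $\cO(I)$-ideal by the very definition of $\cO(I)$ (and remains fractional, since $aI\subseteq\cO_L\subseteq\cO(I)$ for a suitable $a\in\cO_L$), while if $\cO\supseteq\cO_L$ is any valuation ring with $\cO I\subseteq I$, then every $b\in\cO$ satisfies $bI\subseteq I$, i.e.\ $b\in\cO(I)$, giving $\cO\subseteq\cO(I)$. I expect the only genuinely delicate points to be the unit computation and the ensuing placement $\cM(I)\subseteq\cM_L$; once these are secured, both the prime-ideal and the maximality claims reduce to bookkeeping.
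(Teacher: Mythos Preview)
Your proof is correct. The paper does not actually give its own proof of this proposition; it simply cites it as part of \cite[Theorem~3.6]{Kucuts}. Your direct ring-theoretic verification---overring of a valuation ring is a valuation ring, the unit computation $b\in\cO(I)^\times\Leftrightarrow bI=I$, and the contraction $\cM(I)\subseteq\cM_L$---is the natural elementary route and requires no outside input. Judging from the surrounding material (equations~(\ref{H(I)}) and~(\ref{O(I)2})), the cited reference works instead on the value-group side, identifying $\cO(I)$ via the invariance group $\cG(vI)$ of the final segment $vI\subseteq vL$; that viewpoint buys a uniform treatment of invariance groups and invariance valuation rings, but for the bare statement here your self-contained argument is entirely adequate.
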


If the ideal $I$ is definable in an expansion $\cL$ of $\cL_{\rm val}$, then also
$\cO(I)$ and $\cM(I)$ are $\cL$-definable:
\begin{eqnarray}
\cO(I)&:=&\{b\in L\mid \forall c\in I:\; bc\in I\}, \label{defO(I)}\\
\cM(I)&:=&\{b\in \cO(I)\mid \exists a\in I\> \forall c\in I:\;
bc\ne a\}\>.  \label{defM(I)}
\end{eqnarray}

\parm
For a subset $M$ of an ordered abelian group $\Gamma$, we
define its \bfind{invariance group} to be
\[
\cG(M)\>:=\>\{\gamma\in \Gamma\mid M+\gamma=M\}\>.
\]
This is a subgroup of $\Gamma$, and it is a convex subgroup if $M$ is convex (which in
particular is the case if $M$ is an initial or a final segment of $\Gamma$). If $S$
is a final segment of $\Gamma$ and $\gamma\in\Gamma$, then $\gamma+S:=\{\gamma+\alpha
\mid \alpha\in S\}$ and $-S:=\{-\alpha\mid \alpha\in S\}$ are again final segments of
$\Gamma$ with
\begin{equation}                         \label{G(gamma+S)}
\cG(\gamma+S)\>=\>\cG(S)\>=\>\cG(-S)\>.
\end{equation}
For these facts and more information on invariance groups, see
\cite[Section 2.4]{Kucuts} and \cite{Ku60}.

For every coarsening $\cO$ of $\cO_L\,$, we set
\[
H(\cO)\>:=\> v\cO\cap -v\cO\>=\>v\cO^\times\>.
\]
This is a convex subgroup of the value group $vL$ of $(L,v)$. If $\cM$ is the maximal ideal of $\cO$, then
\begin{equation}                            \label{vM}
v\cM\>=\>\{\alpha\in vL\mid \alpha >v\cO^\times\}\>=\>\{\alpha\in vL\mid \alpha >
H(\cO)\}\>.
\end{equation}
The valuation $w$ associated with $\cO$ is (up to equivalence) given by
\begin{equation}                 \label{wa}
wa\>=\> va/H(\cO)
\end{equation}
for every $a\in K$, the value group of $w$ is canonically isomorphic to $vK/H(\cO)$,
and the value group of the valuation induced by $v$ on the residue field $Kw$ is
canonically isomorphic to $H(\cO)\,$ (cf.\ \cite{ZS}). The function $\cO\mapsto
H(\cO)$ sends every coarsening $\cO$ of $\cO_L$ to a convex subgroup of $vL$. Its
inverse is given by sending a convex subgroup $H$ of $vL$ to
\begin{equation}                          \label{O(H)}
\cO(H)\>:=\> \{b\in K\mid \exists\alpha\in H:\>\alpha\leq vb\}\>.
\end{equation}
We call this the \bfind{coarsening of $\cO_L$ associated with $H$}.

\pars
Further, for every $\cO_v$-ideal $I$ we define
\[
H(I)\>:=\>H(\cO(I))\>. 
\]
By \cite[Theorem 3.6 3)]{Kucuts},
\begin{equation}                                 \label{H(I)}
H(I)\>=\>H(\cO(I))\>=\>\cG(vI)\>
\end{equation}
and
\begin{equation}                                 \label{O(I)2}
\cO(I)\>=\>\cO(\cG(vI))\>=\>\cO(H(I))\>.
\end{equation}

The next result is part of \cite[Lemma 3.5]{Kucuts}.
\begin{lemma}                         \label{H(O)}
For every coarsening $\cO$ of $\cO_L$ with maximal ideal $\cM$,
\begin{equation}                            \label{H=IO=IM}
H(\cO)\>=\>\cG(v\cO)\>=\>\cG(v\cM)\>.
\end{equation}
\end{lemma}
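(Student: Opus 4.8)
The plan is to prove Lemma~\ref{H(O)}, which asserts that for a coarsening $\cO$ of $\cO_L$ with maximal ideal $\cM$, we have $H(\cO)=\cG(v\cO)=\cG(v\cM)$. My strategy is to compute both invariance groups directly from the explicit description of $v\cO$ and $v\cM$ as a final and an initial segment of $vL$, and then invoke the general fact (\ref{G(gamma+S)}) that translating a final (or initial) segment does not change its invariance group.

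First I would record the shapes of the two segments. Since $\cO$ is a valuation ring, $v\cO=\{vb\mid b\in\cO\}$ is an initial segment of $vL$, namely $\{\alpha\in vL\mid \alpha\geq H(\cO)\text{ in the sense }\exists\beta\in H(\cO):\beta\le\alpha\}$; more precisely, by the definition of $H(\cO)=v\cO\cap -v\cO$ and the fact that $\cO$ is a coarsening, one checks that $v\cO=\{\alpha\in vL\mid \alpha\geq\gamma\text{ for some }\gamma\in H(\cO)\}$, so its complement $-v\cM$ is the final segment $\{\alpha\mid \alpha<H(\cO)\}$ and, dually, $v\cM=\{\alpha\in vL\mid \alpha>H(\cO)\}$ by (\ref{vM}). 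Thus $v\cO$ is an initial segment and $v\cM$ is a final segment of $vL$, both determined by the convex subgroup $H(\cO)$.

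Next I would compute the invariance groups. The key computation is that the invariance group of the initial segment $v\cO$ equals $H(\cO)$, and likewise for the final segment $v\cM$. For this, observe that adding $\gamma\in\Gamma$ to a final segment $S$ fixes it precisely when $\gamma+S=S$; by (\ref{G(gamma+S)}) this invariance group is insensitive to translation, and for a segment whose ``boundary'' is the convex subgroup $H=H(\cO)$, one verifies directly that $\gamma$ preserves the segment if and only if $\gamma\in H$. Concretely: if $\gamma\in H(\cO)$ then $\gamma$ is bounded in absolute value by elements on both sides, so $\gamma+v\cM=v\cM$ (a positive or negative element of the convex subgroup cannot push a value across the threshold $H(\cO)$); conversely if $\gamma\notin H(\cO)$, say $\gamma>H(\cO)$, then $\gamma$ strictly shrinks the final segment, so $\gamma\notin\cG(v\cM)$. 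The same bookkeeping gives $\cG(v\cO)=H(\cO)$. This yields both equalities $H(\cO)=\cG(v\cO)$ and $H(\cO)=\cG(v\cM)$ at once.

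The main obstacle, and the only point requiring care, is the boundary case where the threshold behaves asymmetrically — that is, handling elements $\gamma$ that lie exactly at the ``edge'' of the convex subgroup, and making sure the inequalities in (\ref{vM}) are applied with the correct strictness so that $v\cO$ and $v\cM$ are genuinely complementary up to the value $\infty=v0$. I would address this by working entirely in terms of the convex subgroup $H(\cO)$: since $H(\cO)$ is convex, the condition ``$\alpha>H(\cO)$'' (meaning $\alpha>\beta$ for all $\beta\in H(\cO)$) is stable under adding any element of $H(\cO)$, which is exactly the statement that $H(\cO)\subseteq\cG(v\cM)$, and convexity simultaneously forbids any $\gamma$ outside $H(\cO)$ from fixing the segment, giving the reverse inclusion. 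Everything else is routine translation of segments, for which (\ref{G(gamma+S)}) does the work.
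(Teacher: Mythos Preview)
Your argument is correct in substance: you correctly identify $v\cM=\{\alpha\in vL\mid \alpha>H(\cO)\}$ and $v\cO=\{\alpha\in vL\mid \exists\beta\in H(\cO):\beta\le\alpha\}$, and the verification that a $\gamma\in vL$ stabilises these segments if and only if $\gamma\in H(\cO)$ is exactly right (using convexity of $H(\cO)$ for both inclusions). One terminological slip: you repeatedly call $v\cO$ an \emph{initial} segment and $\{\alpha<H(\cO)\}$ a \emph{final} segment, when it is the other way around --- $v\cO$ is upward-closed, hence a final segment, and its complement $-v\cM=\{\alpha<H(\cO)\}$ is an initial segment. This does not affect the computation, since the invariance group of a set equals that of its complement and of its negative, which is precisely what (\ref{G(gamma+S)}) records.

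As for comparison with the paper: the paper does not give a proof of this lemma at all, but simply cites it as part of \cite[Lemma~3.5]{Kucuts}. Your direct verification is therefore more self-contained than what the paper offers, and is the natural elementary argument.
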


We leave the straightforward proof of the following result to the reader.
\begin{lemma}                         \label{IaI}
If $I$ is an $\cO_L$-ideal and $J=aI$ with $0\ne a\in L$, then $\cO(J)=\cO(I)$,
$\cM(J)=\cM(I)$ and $H(J)=H(I)$.
\end{lemma}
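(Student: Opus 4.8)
The plan is to exploit that multiplication by the nonzero field element $a$ is a bijection $L\to L$, $x\mapsto ax$, which both preserves and reflects inclusions and strict inclusions of $\cO_L$-submodules: for $\cO_L$-submodules $X,Y\subseteq L$ one has $X\subseteq Y$ if and only if $aX\subseteq aY$, and $X\subsetneq Y$ if and only if $aX\subsetneq aY$. Since $a$ is invertible in the field $L$, the inverse bijection $x\mapsto a^{-1}x$ is equally available, and this is exactly what makes the two-sided implications go through. This single observation drives all three equalities.

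First I would establish $\cO(J)=\cO(I)$. For $b\in L$, the condition $b\in\cO(J)$ reads $bJ\subseteq J$, that is $b(aI)\subseteq aI$, equivalently $a(bI)\subseteq aI$; by the bijection principle this holds if and only if $bI\subseteq I$, i.e.\ $b\in\cO(I)$. Hence $\cO(J)=\cO(I)$.

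Next, $\cM(J)=\cM(I)$ follows at once from the previous step together with Proposition~\ref{Ovr}: there $\cM(I)$ is identified as the maximal ideal of the valuation ring $\cO(I)$ and $\cM(J)$ as the maximal ideal of $\cO(J)$, and since $\cO(J)=\cO(I)$ the two maximal ideals coincide. Should one prefer a direct argument, the same bijection computation applies verbatim with strict inclusions: $b\in\cM(I)$ means $bI\subsetneq I$, equivalently $bJ=a(bI)\subsetneq aI=J$, i.e.\ $b\in\cM(J)$.

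Finally, for $H(J)=H(I)$ there are two equally short routes. Using $H(\cdot)=H(\cO(\cdot))$ directly, the first step gives $H(J)=H(\cO(J))=H(\cO(I))=H(I)$. Alternatively, by (\ref{H(I)}) we have $H(I)=\cG(vI)$ and $H(J)=\cG(vJ)$; since $I$ is an $\cO_L$-ideal the value set $vI$ is a final segment of $vL$, and $vJ=va+vI$ is its shift by $va\in vL$, so the shift-invariance of invariance groups (\ref{G(gamma+S)}) yields $\cG(vJ)=\cG(va+vI)=\cG(vI)$, whence $H(J)=H(I)$. There is no genuine obstacle in any of this; the only point worth noting is that $a$ need not lie in $\cO_L$, so $J=aI$ may be a proper fractional ideal, but the invertibility of $a$ in $L$ keeps all the bijection arguments symmetric.
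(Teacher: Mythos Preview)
Your proof is correct. The paper does not actually give a proof of this lemma at all, stating only that the straightforward proof is left to the reader; your direct verification via the bijection $x\mapsto ax$ (together with the alternative through (\ref{G(gamma+S)})) is exactly the kind of argument intended.
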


\bn
%
%
\section{Immediate elements in arbitrary valued field extensions}  \label{sectv(z-K)}
%
%
%
%
Take any valued field extension $(L|K,v)$ and $z\in L\setminus K$ an immediate element
in $(L|K,v)$, that is, the set $v(z-K)$ has no maximal element and is an initial
segment of $vK$. We define
\begin{equation}
I_{z-K;K}\>:=\> (b\in K\mid vb\in -v(z-K)\}
\end{equation}
and
\begin{equation}                          \label{I_{z-K}}
I_{z-K}\>:=\> (b\in L\mid \exists c\in K:\> vb\geq -v(z-c)\}\>.
\end{equation}
If $v(z-K)=vK$, then $I_{z-K;K}=K$. If $v(z-K)$ is bounded from above, then $-v(z-K)$
is bounded from below and therefore, $I_{z-K;K}$ is a fractional $\cO_K$-ideal and
$I_{z-K}$ is a fractional $\cO_K$-ideal. We set $\cO_{z-K;K}:=\cO(I_{z-K;K})$ (taken
in $(K,v)$), and denote its maximal ideal by $\cM_{z-K;K}\,$. Likewise, we set
$\cO_{z-K}:=\cO(I_{z-K})$ (taken in $(L,v)$), and denote its maximal ideal by
$\cM_{z-K}\,$. We see that by (\ref{I_{z-K}}),
$I_{z-K}$ is definable in $L$ in the language $\cL_{{\rm val},K}$ with parameter $z$.
Hence by (\ref{defO(I)}) and  (\ref{defM(I)}), also the invariance valuation ring
$\cO_{z-K}$ and its maximal ideal $\cM_{z-K}$ are $\cL_{{\rm val},K}$-definable in
$L$ with parameter $z$.

Since $z$ is not a parameter in $K$, we may in general not have an elementary
definition of $\cO_{z-K;K}$ and $I_{z-K;K}$ in $(K,v)$. For example, we have not even
excluded the case that $z$ is transcendental over $K$. On the other hand, if $z$ is
algebraic over $K$ with a suitable minimal polynomial, then the situation may change,
as we will see in Sections~\ref{sectdee} and~\ref{sectdem}.

\pars
Now we define
\begin{equation}
H_{z-K;K}\>:=\> H(\cO_{z-K;K})\>=\>H(\cO(I_{z-K;K}))
\end{equation}
and
\begin{equation}                         
H_{z-K}\>:=\> H(\cO_{z-K})\>=\>H(\cO(I_{z-K}))\>.
\end{equation}

By (\ref{H(I)}) and (\ref{G(gamma+S)}),
\[
H_{z-K;K}\>=\> \cG(vI_{z-K;K})\>=\> \cG(-v(z-K)) \>=\> \cG(v(z-K))\>.
\]
We observe that $H_{z-K;K}$ is a proper convex subgroup of $vK$ if and only if $v(z-K)$
is bounded from above, and that this holds if and only if $z$ does not lie in the
completion of $(K,v)$. If $H_{z-K;K}$ is not a proper convex subgroup, that is,
$H_{z-K;K}=vK$, then $\cO_{z-K;K}=K$, i.e., the corresponding valuation is trivial.
Otherwise, this coarsening of $\cO_K$ is nontrivial.

\pars
Now assume in addition that the extension $(L|K,v)$ is immediate. Then as mentioned in
Section~\ref{sectnot}, every $z\in L\setminus K$ is an immediate element in $(L|K,v)$,
and $v(z-K)$ is an initial segment of $vL=vK$. In this case, $H_{z-K;K}$
is also a convex subgroup in $vL$, and moreover,
\[
vI_{z-K}\>=\> \{\alpha\in vL\mid \exists c\in K:\> \alpha\geq -v(z-c)\} \>=\>
-v(z-K) \>=\> vI_{z-K;K}\>.
\]
Using this together with (\ref{H(I)}), we obtain:
\[
H_{z-K}\>=\> H(\cO(I_{z-K})) \>=\> \cG(vI_{z-K}) \>=\>  \cG(vI_{z-K;K}) \>=\>
H(\cO(I_{z-K;K})) \>=\> H_{z-K;K}\>.
\]

\bn
%
%
\section{Defect extensions of prime degree}                    \label{sectde}
%

%
%
%

Take a Galois defect extension $\cE=(L|K,v)$ of prime degree $p$. We set
\[
H_\cE\>:=\> H(\cO_\cE)\>=\>H(\cO(I_\cE))\>.
\]
By Lemma~\ref{H(O)}, $H_\cE$ is the invariance group of $v\cO_\cE$ and of $v\cM_\cE\,$.
By (\ref{H(I)}), Lemma~\ref{H(O)} and the definition of $I_\cE\,$,
\begin{equation}                               \label{H_E}
H_\cE\>=\> \cG(vI_\cE)\>=\>\cG(\Sigma_\cE)\>=\>\cG(v\cO_\cE)\>=\>\cG(v\cM_\cE)\>.
\end{equation}

\begin{theorem}                             \label{Sigma_E-H_E}
For every Galois defect extension $\cE=(L|K,v)$ of prime degree $p$, the following
statements hold.
\sn
1) The set $\Sigma_\sigma$ is a final segment of $vK^{>0}$ and independent of the
choice of a generator $\sigma$ of $\Gal L|K$.
\sn
2) For every $a\in L\setminus K$ and every generator $\sigma$ of $\Gal L|K$,
\begin{equation}                                \label{S_E}
\Sigma_\cE\>=\>-v(a-K)+v(\sigma a-a)
\end{equation}
and
\begin{equation}                                \label{S_Eapp}
I_\cE\>=\> (\sigma a-a)I_{a-K},\;\; H_\cE\>=\>H_{a-K},\;\; \cO_\cE\>=\>\cO_{a-K},
\;\;\mbox{and }\;\cM_\cE\>=\>\cM_{a-K}\>.
\end{equation}
\sn
3) For every $a\in L\setminus K$,
\begin{equation}                                \label{H_EGG}
H_\cE\>=\>\cG(v(a-K))\>=\>\cG(-v(a-K))\>.
\end{equation}
\end{theorem}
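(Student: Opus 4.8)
The plan is to reduce to a convenient normal form, settle Part~1 by a direct realization argument, and then derive Parts~2 and~3 from a single key identity together with Lemma~\ref{IaI} and the computations of Section~\ref{sectv(z-K)}. First I would record the reductions: since $\cE$ is a defect extension of prime degree $p$, the Lemma of Ostrowski forces $(vL:vK)[Lv:Kv]=1$, so $\cE$ is immediate and unibranched; hence $v\circ\sigma=v$ for every $\sigma\in\Gal L|K$ and $L=K(a)$ for every $a\in L\setminus K$. As $\Gal L|K\cong\Z/p\Z$, every $\sigma\ne\mathrm{id}$ is a generator and, for such $\sigma$, $\sigma b\ne b$ is equivalent to $b\notin K$. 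Writing $u_b:=\sigma b/b$, immediacy gives $u_b\in\cO_L^\times$ with residue $1$ (because $\sigma$ acts trivially on $Lv=Kv$), so $v\bigl(\frac{\sigma b-b}{b}\bigr)=v(u_b-1)>0$ and thus $\Sigma_\sigma\subseteq vK^{>0}$.

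For the rest of Part~1 I must show that $\Sigma_\sigma$ is upward closed and independent of $\sigma$. Given $\gamma=v\bigl(\frac{\sigma b-b}{b}\bigr)\in\Sigma_\sigma$ with $b\in L\setminus K$, I would use the translates $b-c$, $c\in K$: since $\sigma(b-c)-(b-c)=\sigma b-b$, one has $v\bigl(\frac{\sigma(b-c)-(b-c)}{b-c}\bigr)=v(\sigma b-b)-v(b-c)$. Choosing $c\in K^\times$ with $v(c)<v(b)$ realizes $v(b-c)=v(c)$ as an arbitrary value below $v(b)$, so every $\delta>\gamma$ in $vK$ lies in $\Sigma_\sigma$; hence $\Sigma_\sigma$ is a final segment of $vK^{>0}$. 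For independence of $\sigma$, expanding $\sigma^i b/b=\prod_{j=0}^{i-1}\sigma^j(u_b)$ and using $v\circ\sigma=v$ shows $v\bigl(\frac{\sigma^i b-b}{b}\bigr)\ge v\bigl(\frac{\sigma b-b}{b}\bigr)$; since $\Sigma_\sigma$ is a final segment this gives $\Sigma_{\sigma^i}\subseteq\Sigma_\sigma$, and the reverse inclusion follows because $\gcd(i,p)=1$ makes $\sigma$ a power of $\sigma^i$. Thus $\Sigma_\sigma=\Sigma_\cE$ is well defined.

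The heart of Part~2 is the identity (\ref{S_E}). Fix $a\in L\setminus K$ and a generator $\sigma$. Since $\sigma$ fixes $c\in K$, the same translate computation gives $v\bigl(\frac{\sigma(a-c)-(a-c)}{a-c}\bigr)=v(\sigma a-a)-v(a-c)$, so letting $c$ range over $K$ yields the inclusion $-v(a-K)+v(\sigma a-a)\subseteq\Sigma_\cE$. The reverse inclusion is the step I expect to be the main obstacle. Writing an arbitrary $b\in L^\times$ as $b=g(a)$ with $\deg g<p$ and using the divided difference $g(\sigma a)-g(a)=(\sigma a-a)\,\tilde g(a,\sigma a)$, the claim $v\bigl(\frac{\sigma b-b}{b}\bigr)\in -v(a-K)+v(\sigma a-a)$ reduces to showing $v(g(a))-v(\tilde g(a,\sigma a))\in v(a-K)$, i.e.\ that this value lies in the initial segment $v(a-K)$. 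Bounding $v(g(a))$ and $v(\tilde g(a,\sigma a))$ against the cut $v(a-K)$ (via the factorization of $g$ over an algebraic closure and the known behaviour of the approximation set $v(a-K)$) is precisely the analysis carried out in \cite[Section~2.4]{KR}, on which I would rely; this establishes (\ref{S_E}).

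Finally I would read off the remaining assertions. Since $vI_{a-K}=-v(a-K)$ by Section~\ref{sectv(z-K)}, identity (\ref{S_E}) reads $vI_\cE=\Sigma_\cE=v(\sigma a-a)+vI_{a-K}=v\bigl((\sigma a-a)I_{a-K}\bigr)$; as both $I_\cE$ and $(\sigma a-a)I_{a-K}$ are determined by their value sets, this gives $I_\cE=(\sigma a-a)I_{a-K}$. Applying Lemma~\ref{IaI} with $J=(\sigma a-a)I_{a-K}$ then yields $\cO_\cE=\cO_{a-K}$, $\cM_\cE=\cM_{a-K}$ and $H_\cE=H_{a-K}$, which is (\ref{S_Eapp}). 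For Part~3, combining $H_\cE=H_{a-K}$ with the computation $H_{z-K}=\cG(v(z-K))=\cG(-v(z-K))$ from Section~\ref{sectv(z-K)} gives (\ref{H_EGG}).
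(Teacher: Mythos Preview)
Your proposal is correct and follows essentially the same route as the paper: both rely on \cite{KR} for the key identity~(\ref{S_E}), then deduce (\ref{S_Eapp}) via Lemma~\ref{IaI}, and obtain Part~3 from the invariance-group machinery. The only notable difference is that you supply direct elementary arguments for Part~1 (the final-segment property via translates $b-c$, and the independence of $\sigma$ via the product expansion of $\sigma^i b/b$), whereas the paper simply quotes \cite[Theorems~3.4 and~3.5]{KR}; likewise, for Part~3 you go through $H_\cE=H_{a-K}$ and the computations of Section~\ref{sectv(z-K)}, while the paper argues directly from $H_\cE=\cG(\Sigma_\cE)$ and~(\ref{G(gamma+S)}).
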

\begin{proof}
1): By \cite[Theorem~3.5]{KR}, $\Sigma_\sigma$ is independent of the choice
of a generator $\sigma$ of $\Gal L|K$; so we denote it by $\Sigma_\cE\,$. By
\cite[Theorem~3.4]{KR}, $\Sigma_\cE$ is a final segment of $vK^{>0}$. Note that
$vK^{>0}=vL^{>0}$ since $vK=vL$, as the extension $(L|K,v)$ is immediate.
\sn
2): Equation~(\ref{S_E}) is part of \cite[Theorem~3.4]{KR}. It implies
Equation~(\ref{S_Eapp}) by way of the definitions of $I_\cE$ and $I_{a-K}$,
and Lemma~\ref{IaI}.
\sn
3): From (\ref{H_E}) we know that $H_\cE$ is equal to $\cG(\Sigma_\cE)$, and by
(\ref{S_E}) this is equal to $\cG(-v(a-K)+v(\sigma a-a))$. Since $-v(a-K)+
v(\sigma a-a)$ is a final segment of $vK$ by part 1) of our theorem and $\alpha:=
v(\sigma a-a)\in vL$,
we can infer from equation (\ref{G(gamma+S)}) that $\cG(-v(a-K)+v(\sigma a-a))
=\cG(-v(a-K))$. Finally, the equality $\cG(v(a-K))=\cG(-v(a-K))$ follows from
\cite[Lemma 2.12 3)]{Kucuts}.
\end{proof}

Based on part 2) of this theorem, we can now give the
\sn
{\it Proof of part 1) of Proposition~\ref{eldefIOM}}:\n
We have $\cO_\cE=\cO(I_{a-K})=\{b\in L
\mid bI_{a-K}\subseteq I_{a-K}\}=\{b\in L\mid \forall c\in K\;\exists c'\in K:\;
vb(a-c)=v(a-c')\}\cup\{0\}$, where we use that $v(a-K)$ is a final segment of $vL$.
Since $\cO(I_{a-K})$ does not depend on the choice of $a\in L\setminus K$, $\cO_\cE$
has the following parameter free definitions in the language $\cL_{{\rm val},K}\,$:
\begin{equation}
\cO_\cE\>=\>\{b\in L \mid\forall x\in L\setminus K\;\forall c\in K\;\exists c'\in K:\;
vb(x-c)=v(x-c')\}\cup\{0\}\>,
\end{equation}
and the quantifier ``$\forall x\in L\setminus K$'' can also be replaced by
``$\exists x\in L\setminus K$''.
Further, $\cM_\cE= \{b\in L\mid bI_{a-K}\subsetneq I_{a-K}\}$ has the following
parameter free definition in the language $\cL_{{\rm val},K}\,$:
\begin{equation}
\cM_\cE\>=\>\{b\in L \mid b\in\cO_\cE \wedge \exists c\in K\; \forall c'\in K:\;
vb(x-c)\ne v(x-c')\}\>.
\end{equation}

\parm
Let us show that our definitions (\ref{indepdefnew}) and (\ref{indepdef})
of independent defect are equivalent.
By definition of $I_\cE$ we have $\Sigma_\cE=vI_\cE\,$. By (\ref{vM}), $v\cM_\cE=
\{\alpha\in vK\mid \alpha >H(\cO_\cE)\}=\{\alpha\in vK\mid \alpha >H_\cE\}$. Hence
(\ref{indepdef}) reads as $vI_\cE=v\cM_\cE\,$. Since the function $M\mapsto vM:=
\{va\mid a\in M\}$ that sends every $\cO_L$-module $M\subseteq L$ to a corresponding
final segment in $vL$ is bijective, the latter equality is equivalent to the equality
$I_\cE=\cM_\cE\,$. Further, as $vK/H_\cE$ is the value group of $\cO_\cE\,$,
$vK/H_\cE$ having no smallest positive element is equivalent to $\cM_\cE$ being a
nonprincipal $\cO_\cE$-module.

Finally, we show that (\ref{indepdef}) is equivalent to the condition (6) in the
original definition of independent defect in \cite{KR}. It is obvious that
(\ref{indepdef}) implies the latter. For the converse, assume that $\Sigma_{\cE}=
\{\alpha\in vK\mid \alpha >H\}$ for some proper convex subgroup $H$ of $vL$ such that
$vL/H$ has no smallest positive element. Then by \cite[Lemma 2.13 5)]{Kucuts},
$H$ is the invariance group of $\Sigma_{\cE}$, hence by (\ref{H_E}), it is equal to
$H_\cE\,$.

\parm
While we have given elementary definitions of $\cO_\cE$ and $\cM_\cE\,$, the problem
with doing the same for $I_\cE$ is that we may not have enough information on the
factor $\sigma a-a$. We will now show that this changes when we know that the extension
is an Artin-Schreier or a Kummer extension of prime degree. We will thereby prove
part 2) of Proposition~\ref{eldefIOM}.

%
%
\subsection{The equal characteristic case}                    \label{sectdee}
Let us first discuss the case where $(K,v)$ is of equal positive characteristic, that
is, $\chara K= \chara Kv=p>0$. Then every Galois defect extension $\cE=(L|K,v)$ of
prime degree $p$ is an Artin-Schreier extension, that is, generated by an
\bfind{Artin-Schreier generator} $\vartheta\in L\setminus K$ with $\vartheta^p
-\vartheta\in K$. By \cite[Theorem 3.5]{KR},
\begin{equation}                                \label{S_E=AS}
\Sigma_\cE\>=\>-v(\vartheta-K)\>.
\end{equation}
for every such $\vartheta$. Further, $v(\sigma\vartheta-\vartheta)=0$, hence
\[
I_\cE\>=\>I_{\vartheta-K}\>=\> \{b\in L\mid \exists c\in K:\> vb\geq -v(\vartheta-c)\}
\]
in this case. Equation (\ref{S_E=AS}) shows that the set $v(\vartheta-K)$ does not
depend on the choice of the Artin-Schreier generator of $L|K$, hence $I_\cE$ has the
following parameter free definitions in the language $\cL_{{\rm val},K}\,$:
\begin{equation}                     \label{defIE1}
I_\cE\>=\>\{b\in L \mid\exists x\in L\setminus K \;\exists c\in K:\> x^p-x\in K
\>\wedge\> vb\geq -v(\vartheta-c)\}\>
\end{equation}
and
\begin{equation}                     \label{defIE2}
I_\cE\>=\>\{b\in L \mid\forall x\in L\setminus K \;\exists c\in K:\> x^p-x\in K
\>\rightarrow\> vb\geq -v(\vartheta-c)\}\>.
\end{equation}

\pars
Now assume that $\cE$ has independent defect with associated convex subgroup $H_\cE\,$.
By \cite[Theorem 1.7]{CuKuRz}, this holds if and only if
\begin{equation}                    \label{eq1}
v(\vartheta^p-\vartheta-\wp(K))\>=\>\{\alpha\in pvK\mid \alpha< H_\cE\}\>.
\end{equation}
Since $vL/H_\cE$ has no smallest positive element, equation~(\ref{eq1}) is equivalent
to
\begin{equation}
H_\cE\>=\>\{\beta\in vK\mid \beta>v(\vartheta^p-\vartheta-\wp(K))\mbox{ and }
-\beta>v(\vartheta^p-\vartheta-\wp(K))\}\>.
\end{equation}
In other words,
\begin{equation}
H_\cE\>=\>\{\pm\beta\in vK\mid \forall c\in K: v(\vartheta^p-\vartheta-c^p+c)
<\beta\leq 0\}\>.
\end{equation}

The convex subgroup $H_\cE$ gives rise to an $\cL_{\rm val}$-definition of the
coarsening $\cO_{\cE;K}=\cO(H_\cE)=\{b\in K\mid \exists\alpha\in H_\cE:\>\alpha\leq
vb\}$ (taken in $K$) of the valuation ring $\cO_K\,$, namely
\begin{equation}                               \label{defO_EAS}
\cO_{\cE;K}\>=\> \{b\in K\mid \forall c\in K: v(\vartheta^p-\vartheta-c^p+c)< vb\}\>,
\end{equation}
whose value group is $vK/H_\cE\,$.

\pars
For our applications, we are more interested in the coarsening of $\cO_L$ corresponding
to $H_\cE$. By (\ref{O(H)}),
\begin{equation}                           \label{O_E}
\cO_\cE\>=\>\cO(H_\cE)\>=\>\{b\in L\mid \exists\alpha\in H_\cE:\>\alpha\leq vb\}\>.
\end{equation}
By the definition of independent defect combined with Equation~(\ref{S_E=AS}), if
$\cE$ has independent defect with associated convex subgroup $H_\cE\,$, then
\begin{equation}                    \label{eq2}
v(\vartheta-K)\>=\>\{\alpha\in vK\mid \alpha< H_\cE\}\>.
\end{equation}
%
%
Since this does not depend on the choice of the Artin-Schreier generator of $L|K$,
$\cO_\cE$ has the following parameter free definitions in the language
$\cL_{{\rm val},K}\,$:
\begin{equation}
\cO_\cE\>=\>\{b\in L \mid\forall x\in L\setminus K \;\;\forall c\in K:\;  x^p-x\in
K\>\rightarrow\> v(x-c)< vb\}\>
\end{equation}
and
\begin{equation}
\cO_\cE\>=\>\{b\in L \mid\exists x\in L\setminus K \;\;\forall c\in K:\;  x^p-x\in
K\>\wedge\> v(x-c)< vb\}\>.
\end{equation}
Also the maximal ideal $\cM_\cE$ of $\cO_\cE$ has a
parameter free definition in the language $\cL_{{\rm val},K}\,$:
\begin{equation}
\cM_\cE\>=\>
\{b\in L\mid  \exists x\in L\setminus K\; \exists c\in K:\; x^p-x\in K\>\wedge\>
-v(x-c) \leq vb\}\>.
\end{equation}

\mn
%
%
\subsection{The mixed characteristic case}                    \label{sectdem}
Now we discuss the case where $(K,v)$ is of mixed characteristic, that is, $\chara K=0$
and $\chara Kv=p>0$. We assume in addition that $K$ contains a primitive $p$-th root of
unity $\zeta_p\,$. Then every Galois defect extension $\cE=(L|K,v)$ of prime
degree $p$ is a \bfind{Kummer extension}, that is, generated by a \bfind{Kummer
generator} $\eta\in L\setminus K$ with $\eta^p\in K$. Then by \cite[Theorem 3.5]{KR}
and \cite[Lemma 2.5]{KR},
\begin{equation}                                \label{S_E=K}
\Sigma_\cE\>=\>v(\zeta_p -1)-v(\eta-K)\>=\>\frac{1}{p-1}vp-v(\eta-K)\>
\end{equation}
for every such $\eta$. Further, $\sigma\eta-\eta=\zeta_p -1$ for suitable $\zeta_p\,$, hence
\[
I_\cE\>=\>(\zeta_p -1)I_{\vartheta-K}\>=\> \{b\in L\mid \exists c\in K:\> vb\geq
\frac{1}{p-1}vp-v(\vartheta-c)\}
\]
in this case. Similarly as in the equal characteristic case, $I_\cE$ has the
following parameter free definitions in the language $\cL_{{\rm val},K}\,$:
\begin{equation}                     \label{defIE3}
I_\cE\>=\>\{b\in L \mid\exists x\in L\setminus K \;\exists c\in K:\> x^p-x\in K
\>\wedge\> vb\geq \frac{1}{p-1}vp-v(x-c)\}\>
\end{equation}
and
\begin{equation}                     \label{defIE4}
I_\cE\>=\>\{b\in L \mid\forall x\in L\setminus K \;\exists c\in K:\> x^p-x\in K
\>\rightarrow\> vb\geq \frac{1}{p-1}vp-v(x-c)\}\>.
\end{equation}

\pars
Now assume that $\cE$ has independent defect with associated convex subgroup $H_\cE\,$.
By \cite[Theorem 1.7]{CuKuRz}, this holds if and only if
\begin{equation}                    \label{eq1K}
v(\eta^p-K^p)   
\>=\>v(\zeta_p -1)^p+\{\alpha\in pvK\mid \alpha< H_\cE\}\>.
\end{equation}

Similarly as in the equal characteristic case, we obtain that
\begin{equation}
H_\cE\>=\>\{\pm\beta\in vK\mid \forall c\in K: v(\eta^p-c^p)-\frac{p}{p-1}vp
<\beta\leq 0\}\>,
\end{equation}
and we have the $\cL_{\rm val}$-definition
\begin{equation}                               \label{defO_EK}
\cO_{\cE;K}\>:=\>   
\{b\in K\mid \forall c\in K: v(\eta^p-c^p)-\frac{p}{p-1}vp < vb\}\>.
\end{equation}
Note that for this definition and definition (\ref{defO_EAS}) it is not needed that
$(K,v)$ be henselian, and that in fact, they will be applied to deeply ramified fields,
which are not required to be henselian. For the case of henselian fields $(K,v)$, these
definitions are used in \cite[Theorem 4.11]{KRS} to define corresponding henselian
valuations on $K$ that are definable in the language of rings.

\parm
By the definition of independent defect combined with Equation~(\ref{S_E=K}), if
$\cE$ has independent defect with associated convex subgroup $H_\cE\,$, then
\begin{equation}                    \label{eq2K}
v(\vartheta-K)-v(\zeta_p-1)\>=\>\{\alpha\in vK\mid \alpha< H_\cE\}\>.
\end{equation}
Hence in this case, using again (\ref{O_E}) together with the fact that equation~(\ref{eq2K}) is independent of the choice of $\eta\in L\setminus K$
satisfying $\eta^p\in K$), we can give the following parameter free
$\cL_{{\rm val},K}$-definitions of $\cO_\cE$:
\begin{eqnarray*}
\cO_\cE\>=\>\{b\in L &|& \forall x\in L\setminus K\;\; \forall c\in K:\>
(x^p\in K\,\wedge\,v(x-1)>0)\\
&& \rightarrow\> v(x-c)-\frac{1}{p-1}vp< vb\}\>
\end{eqnarray*}
and
\begin{eqnarray*}
\cO_\cE\>=\>\{b\in L &|& \exists x\in L\setminus K: x^p\in K\,\wedge\,v(x-1)>0\\
&& \wedge\;\forall c\in K: v(x-c)-\frac{1}{p-1}vp< vb\}\>.
\end{eqnarray*}
Also the maximal ideal of $\cO_\cE$ admits a parameterfree
$\cL_{{\rm val},K}$-definition:
\begin{eqnarray*}
\cM_\cE\>=\>\{b\in L &|& \exists x\in L\setminus K: x^p\in K\,\wedge\,v(x-1)>0\\
&& \wedge \;\exists c\in K:
-v(x-c)+\frac{1}{p-1}vp\leq vb\}\>.
\end{eqnarray*}

\mn
%
%
\subsection{Properties and applications of $I_\cE\,$, $\cO_\cE$ and $\cM_\cE$}
We keep our assumption that $\cE=(L|K,v)$ is a Galois defect extension of prime
degree $p$.
\pars
Equations (\ref{defIE1}), (\ref{defIE2}), (\ref{defIE3}) and (\ref{defIE4})
prove part 2) of Theorem~\ref{eldefIOM}.
\pars
The following facts are proven in \cite{CuKuRz}. Part 1) follows directly from our
definition $\cO_\cE=\cO(I_\cE)$ in the introduction together with Proposition~\ref{Ovr}
which implies the assertion. However, in \cite{CuKuRz}, under the additional assumption
that $K$ contains a primitive $p$-th root of unity if $\chara K=0$, $\cO_\cE$ is
defined in a different way, and our assertion is part of \cite[Theorem 1.4]{CuKuRz},
as is part 2).
\begin{proposition}
Take a Galois extension $\cE=(L|K,v)$ of prime degree $p$ with independent defect.
\sn
1) \ The ideal $\cM_\cE$ is equal to the ramification ideal $I_\cE$,  and
$\cO_\cE$ is the largest of all coarsenings $\cO'$ of $\cO_L$  such that $I_\cE$ is
an $\cO'$-ideal.
\sn
2) \ If $\chara K=0$, then assume in addition that $K$ contains a primitive $p$-th root
of unity. Then the trace $\tr_{L|K} \left(\cM_L\right)$ is equal to $\cM_\cE\cap K$.
\end{proposition}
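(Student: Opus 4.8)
The plan is to treat the two assertions separately, doing Part~1) by unwinding definitions and reserving the real work for Part~2).

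For Part~1), the equality $\cM_\cE=I_\cE$ is nothing but the first clause of the definition (\ref{indepdefnew}) of independent defect, so there is nothing further to prove once that definition has been adopted. For the maximality statement I would simply apply Proposition~\ref{Ovr} to the ideal $I=I_\cE\,$: since by construction $\cO_\cE=\cO(I_\cE)$, that proposition says precisely that $\cO_\cE$ is a valuation ring of $L$ containing $\cO_L$ and is the largest coarsening $\cO'$ of $\cO_L$ for which $I_\cE$ is an $\cO'$-ideal.

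For Part~2), I would first record that the claim reduces to the valuation-theoretic identity $\tr_{L|K}(\cM_L)=\{c\in K\mid vc\in\Sigma_\cE\}\cup\{0\}$, because by Part~1) and the definition of $I_\cE$ one has $\cM_\cE\cap K=I_\cE\cap K=\{c\in K\mid vc\in\Sigma_\cE\}$, and by (\ref{indepdef}) we have $\Sigma_\cE=\{\alpha\in vK\mid\alpha>H_\cE\}$. The strategy is then to compute the trace explicitly through a generator. In the Artin-Schreier case, taking $\vartheta$ with $\sigma\vartheta=\vartheta+1$, the power-sum identities $\sum_{i=0}^{p-1}(\vartheta+i)^j=0$ for $0\le j\le p-2$ and $=-1$ for $j=p-1$ give $\tr_{L|K}\bigl(\sum_{j=0}^{p-1}c_j\vartheta^j\bigr)=-c_{p-1}$, independently of the chosen generator; in the Kummer case, writing $\sigma\eta=\zeta_p\eta$, the geometric-sum identity $\sum_{i=0}^{p-1}\zeta_p^{ij}=0$ for $1\le j\le p-1$ gives $\tr_{L|K}\bigl(\sum_{j=0}^{p-1}c_j\eta^j\bigr)=p\,c_0$. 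Thus the trace of an arbitrary element is governed by a single coefficient.

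With these formulas at hand the inclusion $\supseteq$ is the easy direction: for a prescribed $c\in K$ with $vc\in\Sigma_\cE$ I would realize $c$ as $\tr_{L|K}(b)$ by an explicit short combination in the generator lying in $\cM_L$, exploiting that the generator is an immediate element, so that by (\ref{eq2}) (respectively (\ref{eq2K})) its approximation values $v(\vartheta-K)$ (respectively $v(\eta-K)$) run cofinally up to $H_\cE$ from below; a close enough approximation from $K$ then forces the combination into $\cM_L$. The opposite inclusion $\subseteq$ is where the difficulty lies, and I expect it to be the main obstacle. The cheap bound $v(\tr_{L|K}b)\ge vb$, which follows from $v\circ\sigma=v$ since the extension is immediate and hence unibranched, only yields $\tr_{L|K}(\cM_L)\subseteq\cM_K\,$, and this is strictly weaker than claimed as soon as $H_\cE\ne\{0\}$. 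To obtain the sharp bound I would have to control the cancellation among the terms $c_j\vartheta^j$ (respectively $c_j\eta^j$) for an immediate generator, ruling out that a choice of the remaining coefficients pushes the valuation of the extremal coefficient below the level dictated by $\Sigma_\cE\,$. This is exactly where the full strength of independent defect, in the explicit form (\ref{eq2}) and (\ref{eq2K}), must enter; I would organize the estimate along the chain presentation of $\cO_L$ as a union of simple extensions underlying Theorem~\ref{Sigma_E-H_E}, on which the trace computation can be run uniformly.
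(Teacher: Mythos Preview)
Your treatment of Part~1) is correct and coincides with the paper's: the paper also observes that $I_\cE=\cM_\cE$ is the first clause of definition~(\ref{indepdefnew}) and that the maximality of $\cO_\cE=\cO(I_\cE)$ is exactly the content of Proposition~\ref{Ovr}.

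For Part~2) the paper gives no argument at all; it simply records the assertion as part of \cite[Theorem~1.4]{CuKuRz}. So your sketch already goes beyond what the present paper does. Your trace formulas are correct, and your approach to the inclusion $\supseteq$ via approximation of the generator (using the density of $vK/H_\cE$ from the second clause of~(\ref{indepdef})) is sound. However, you misjudge the inclusion $\subseteq$: the coefficient-cancellation route you outline is unnecessarily hard, and the appeal to a ``chain presentation of $\cO_L$ underlying Theorem~\ref{Sigma_E-H_E}'' is misplaced---that theorem does not use one. A direct argument suffices. For $b\in\cM_L$ with $\sigma b\ne b$ write
\[
\tr_{L|K}(b)\;=\;pb\;+\;\sum_{i=1}^{p-1}(\sigma^i b-b)\,,
\]
and observe that each $\sigma^i b-b$ is a sum of conjugates of $\sigma b-b$, hence has value at least $v(\sigma b-b)=vb+v\bigl(\tfrac{\sigma b-b}{b}\bigr)$. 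Since $v\bigl(\tfrac{\sigma b-b}{b}\bigr)\in\Sigma_\cE=\{\alpha\in vK\mid\alpha>H_\cE\}$ and $vb>0$, this value exceeds every element of $H_\cE$. In characteristic $p$ the term $pb$ vanishes; in the Kummer case $v(\zeta_p-1)=\tfrac{1}{p-1}vp\in\Sigma_\cE$ forces $vp>H_\cE$, so $v(pb)=vp+vb>H_\cE$ as well. This yields $\tr_{L|K}(b)\in\cM_\cE\cap K$ with no further machinery. (If $\sigma b=b$ then $b\in K$ and $\tr_{L|K}(b)=pb$, which is handled the same way.)
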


\parm
The valuation ring $\cO_\cE$ is
of interest for the computation of the annihilator of $\Omega_{\cO_L|\cO_K}$.
The annihilator of an $\cO_L$-module $M$ is the largest among all $\cO_L$-ideals $J$
for which $JM=\{0\}$; we denote it by $\ann M$. From \cite[Theorem 1.4]{CuKuRz} we
know that
\[
\Omega_{\cO_L|\cO_K}\>\cong\>  I_\cE/I_\cE^p\>,
\]
which is zero if and only if $\cE$ has independent defect; in this case, $\ann
\Omega_{\cO_L|\cO_K}=\cO_L\,$. For the case of dependent defect, we infer from
\cite[Proposition 4.7 2)]{CuKuRz}, denoting by $v_\cE$ the valuation on $L$ having
valuation ring $\cO_\cE\,$:
\begin{proposition}
If there is $a\in K$ such that $v_\cE I_\cE^{p-1}$ has infimum $v_\cE a$ in
$v_\cE L$ but does not contain this infimum, then
\begin{equation}             
\ann \Omega_{\cO_L|\cO_K} \>=\> a\,\cO(I_\cE) \>,
\end{equation}
which properly contains $I_\cE^{p-1}$. In all other cases, $\ann \Omega_{\cO_L|\cO_K}=
I_\cE^{p-1}$.
\end{proposition}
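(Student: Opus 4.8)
The plan is to start from the isomorphism $\Omega_{\cO_L|\cO_K}\cong I_\cE/I_\cE^p$ recalled just above and to identify the annihilator with a colon ideal. Since we are computing $\ann$ of the quotient module $I_\cE/I_\cE^p$, an element $x\in L$ annihilates it precisely when $xI_\cE\subseteq I_\cE^p$; hence $\ann\Omega_{\cO_L|\cO_K}=(I_\cE^p:I_\cE)=\{x\in L\mid xI_\cE\subseteq I_\cE^p\}$, the largest $\cO_L$-ideal with this property. Two observations frame the dichotomy: this colon ideal always contains $I_\cE^{p-1}$, because $I_\cE^{p-1}I_\cE=I_\cE^p$; and we may assume $\cE$ has dependent defect, since in the independent case $I_\cE=I_\cE^p$ forces $\Omega_{\cO_L|\cO_K}=0$ and $\ann=\cO_L$, as already noted.

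Next I would pass from ideals to final segments of the value group. For a valuation ring the map $M\mapsto vM$ is an inclusion-preserving bijection between $\cO_L$-modules in $L$ and final segments of $vL$, and under it the power $I_\cE^n$ corresponds to the $n$-fold Minkowski sum of $\Sigma_\cE$ with itself, which I abbreviate $n\Sigma_\cE$; thus $vI_\cE^{p-1}=(p-1)\Sigma_\cE$ and $vI_\cE^p=p\Sigma_\cE$. Consequently $xI_\cE\subseteq I_\cE^p$ is equivalent to $vx+\Sigma_\cE\subseteq p\Sigma_\cE$. Each of $\Sigma_\cE$, $(p-1)\Sigma_\cE$ and $p\Sigma_\cE$ has invariance group containing $H_\cE=\cG(\Sigma_\cE)$ by (\ref{G(gamma+S)}), so I would reduce modulo $H_\cE$ and work in the value group $v_\cE L=vL/H_\cE$ of $\cO_\cE=\cO(I_\cE)$, in which $\overline{\Sigma}:=v_\cE I_\cE$ is a final segment with trivial invariance group. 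The problem then becomes the purely order-theoretic computation of the colon $\{\delta\in v_\cE L\mid \delta+\overline{\Sigma}\subseteq p\,\overline{\Sigma}\}$, where $p\,\overline{\Sigma}=\overline{\Sigma}+v_\cE I_\cE^{p-1}$.

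The final step is the case analysis, which is exactly the content of \cite[Proposition 4.7 2)]{CuKuRz} and which I would invoke for the boundary behaviour. When the final segment $v_\cE I_\cE^{p-1}=(p-1)\overline{\Sigma}$ has an infimum $v_\cE a$ lying in $v_\cE L$ that is not attained, so that $v_\cE I_\cE^{p-1}=\{\delta\mid \delta> v_\cE a\}$, the colon turns out to be the principal final segment $\{\delta\geq v_\cE a\}$, which is $v_\cE(a\cO_\cE)$; translating back gives $\ann\Omega_{\cO_L|\cO_K}=a\,\cO(I_\cE)$, and since $\{\delta\geq v_\cE a\}$ strictly contains the open segment $\{\delta> v_\cE a\}=v_\cE I_\cE^{p-1}$ by the single coset $v_\cE a$, this ideal properly contains $I_\cE^{p-1}$. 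In every other configuration, namely when $(p-1)\overline{\Sigma}$ either has no infimum in $v_\cE L$ or has one that is attained, the colon collapses back to $(p-1)\overline{\Sigma}=v_\cE I_\cE^{p-1}$, whence $\ann\Omega_{\cO_L|\cO_K}=I_\cE^{p-1}$. The main obstacle is precisely this boundary analysis: the two cases are distinguished solely by whether the infimum of $v_\cE I_\cE^{p-1}$ is realized, and one must argue that an unattained infimum of the form $v_\cE a$ contributes exactly one extra coset to the colon while an attained infimum, or the absence of a realized infimum, contributes none. This is where the cited proposition does the work, so once the reduction above is in place the proof is a direct appeal to it.
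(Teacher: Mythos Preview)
Your proposal is correct and follows the same route as the paper: the paper does not give an independent argument but simply records that the statement is inferred from \cite[Proposition~4.7~2)]{CuKuRz}, starting from the isomorphism $\Omega_{\cO_L|\cO_K}\cong I_\cE/I_\cE^p$. Your write-up makes the reduction explicit---identifying the annihilator with the colon ideal $(I_\cE^p:I_\cE)$, translating to final segments of $vL$, and passing to $v_\cE L=vL/H_\cE$---before invoking that same proposition for the dichotomy, so you are supplying the connective tissue that the paper leaves implicit rather than taking a different path.
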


%
%
\section{Defectless extensions}                    \label{sectdl}
We take an extension $\cE=(L|K,v)$ of prime degree $q$, not necessarily equal to
$\chara Kv$. Then either $[L:K]=(vL:vK)$ or $[L:K]=[Lv:Kv]$. {\it We will discuss
the more interesting case of $[L:K]=(vL:vK)$, which we will assume throughout.}

\pars
We define $H_\cE$ to be the largest convex subgroup of $vL$ which is also a convex
subgroup of $vK$; it exists since unions over arbitrary collections of convex
subgroups are again convex subgroups. We take $\cO_\cE$ to be the coarsening
$\cO(H_\cE)$ of $\cO_L$ associated with $H_\cE$ so that its value group is
$vL/H_\cE\,$, and denote its maximal ideal by $\cM_\cE\,$.

The subgroup $H_\cE$ defined here has important
similarities with the convex subgroup $H_\cE$ defined in the defect case.

\pars
We distinguish three mutually exclusive cases describing how $vK$ extends to $vL$; for
convenience, we use the notation of \cite{CuKu}:
\sn
(DL2a): there is no smallest convex subgroup of $vL$ that properly contains $H_\cE\,$;
\sn
(DL2b): there is a smallest convex subgroup $\tilde H_\cE$ of $vL$ that properly contains
$H_\cE\,$, and the archimedean quotient $\tilde H_\cE/H_\cE$ is dense;
\sn
(DL2c): there is a smallest convex subgroup $\tilde H_\cE$ of $vL$ that properly contains
$H_\cE\,$, and the archimedean quotient $\tilde H_\cE/H_\cE$ is discrete.

\pars
Our goal is to find an element $x\in L$ with $vx\notin vK$ such that
\begin{equation}                      \label{(DL2)}
\cO_L\>=\> \bigcup_{c\in K\mbox{\tiny\ with } vcx>0} \cO_K[cx] \>.
\end{equation}
If $c,c'\in K$ with $vc\geq vc'\,$, then $cx=\frac{c}{c'}c' x\in
\cO_K[c' x]$, hence $\cO_K[c x] \subseteq \cO_K[c' x]$.
%
\begin{theorem}{\cite[Theorem 3.3]{CuKu}}                       \label{thmgen}
Take an extension $\cE=(L|K,v)$ of prime degree $q=(vL:vK)$, with $x_0\in L$ such that
$vx_0\notin vK$. Then $qvx_0\in vK$, and the following assertions hold.
\sn
1) If $\cE$ is of type (DL2a) or (DL2b), then (\ref{(DL2)}) holds for $x=x_0\,$.
\sn
2) If $\cE$ is of type (DL2c), then (\ref{(DL2)}) holds for
$x=x_0^j$ with suitable $j\in\{1,\ldots,q-1\}$. If in addition $H_\cE=\{0\}$,
then $\cO_L=\cO_K[cx]$ for suitable $c\in K$.

\sn
The assumption of part 1) holds in particular when every archimedean component of $vK$
is dense, and this in turn holds for every deeply ramified field $(K,v)$.
\end{theorem}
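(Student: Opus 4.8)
The plan is to exploit that, since $q=(vL:vK)$ is prime and $vx_0\notin vK$, the coset $vx_0+vK$ generates the cyclic group $vL/vK$ of order $q$. First I would record the consequences: because $vL/vK$ has order $q$ one gets $qvx_0\in vK$, the powers $x_0^0,\dots,x_0^{q-1}$ form a $K$-basis of $L=K(x_0)$, and for $z=\sum_{i=0}^{q-1}a_ix_0^i$ the values $v(a_ix_0^i)=va_i+ivx_0$ lie in pairwise distinct cosets of $vK$, so that $vz=\min_i v(a_ix_0^i)$ with no cancellation. Hence $z\in\cO_L$ iff $\beta_i:=va_i+ivx_0\geq 0$ for all $i$; and since $ivx_0\notin vK$ for $1\le i\le q-1$, each such $\beta_i$ lies outside $vK$ and is therefore either $+\infty$ (when $a_i=0$) or strictly positive.

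Next I would translate membership in the candidate rings. Writing $x=x_0$ and $\epsilon=v(cx_0)=vc+vx_0$, the identity $z=\sum_i(a_ic^{-i})(cx_0)^i$ shows $z\in\cO_K[cx_0]$ iff $va_i\ge ivc$ for all $i$, which is exactly $i\epsilon\le\beta_i$. The inclusion $\bigcup_{vcx>0}\cO_K[cx]\subseteq\cO_L$ is immediate, so (\ref{(DL2)}) reduces to a statement about the ordered abelian group $vL$: for every tuple $\beta_i$ (in the cosets $ivx_0+vK$) with $\beta_i>0$, there is $\epsilon$ in the coset $vx_0+vK$ with $\epsilon>0$ and $i\epsilon\le\beta_i$ for $1\le i\le q-1$; that is, one must produce a sufficiently small positive $\epsilon$ in the coset.

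Here the subgroup $H_\cE$ enters. Passing to $vL/H_\cE$, the image of $vK$ has trivial convex core by maximality of $H_\cE$: any convex subgroup of $vL/H_\cE$ lying inside $vK/H_\cE$ pulls back to a convex subgroup of $vL$ that is convex in $vK$ and contains $H_\cE$, hence equals $H_\cE$. I would then argue by cases. In case (DL2a) there is no smallest nonzero convex subgroup, so below the finitely many archimedean classes of the $\beta_i$ one finds a nonzero convex subgroup $G$; since $G\not\subseteq vK/H_\cE$ one has $G+vK/H_\cE=vL/H_\cE$, so the coset of $vx_0$ meets $G$, and as $G\cap vK/H_\cE$ has finite index in $G$ it is cofinal, yielding a positive $\epsilon$ whose class lies strictly below all $\beta_i$, whence $i\epsilon<\beta_i$. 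In case (DL2b) the same reasoning places a suitable $\epsilon$ inside the smallest convex subgroup $\tilde H_\cE/H_\cE$, which is dense, and density of the relevant coset supplies arbitrarily small positive $\epsilon$. In both cases $x=x_0$ works, proving part 1).

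The main obstacle is case (DL2c), where $\tilde H_\cE/H_\cE$ is discrete: the coset of $vx_0$ has a least positive element $\epsilon_0$ in this $\Z$-like component, and $i\epsilon_0\le\beta_i$ fails once $i$ is large, so $x_0$ need not work; this is precisely where one replaces $x_0$ by $x_0^j$. I would choose $j\in\{1,\dots,q-1\}$ so that $jvx_0$ sits most favourably relative to the discrete component (in the rank-one model $vK=q\Z\subseteq vL=\Z$ this is the $j$ inverting $vx_0$ modulo $q$), re-expand a general $z$ in powers of $x_0^j$, and use the minimal-polynomial relation $x_0^q\in\sum_{i=0}^{q-1}Kx_0^i$ to control the resulting coefficients; the bookkeeping of these cross terms is the genuinely technical step. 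When $H_\cE=\{0\}$ the discrete component sits at the very bottom of $vL$, and the same value comparison shows a single ring $\cO_K[cx]$ already exhausts $\cO_L$. Finally, for the closing assertion I would observe that in case (DL2c) the group $(\tilde H_\cE\cap vK)/H_\cE$ is a finite-index subgroup of $\tilde H_\cE/H_\cE\cong\Z$, hence $\cong\Z$, and is the smallest convex subgroup of $vK$ properly above $H_\cE$; this forces a discrete archimedean component of $vK$. Therefore, if every archimedean component of $vK$ is dense we must be in case (DL2a) or (DL2b); and deeply ramified fields satisfy (DRvg), i.e. have no discrete archimedean component, so the hypothesis of part 1) holds for them.
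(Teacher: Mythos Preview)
The paper does not prove this theorem; it is quoted from \cite[Theorem~3.3]{CuKu} and used as a black box, so there is no in-paper argument to compare your attempt against.

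Your reduction to the order-theoretic problem---given strictly positive $\beta_i\in ivx+vK$ for $1\le i\le q-1$, find $\epsilon\in(vx+vK)$ with $\epsilon>0$ and $i\epsilon\le\beta_i$---is the right one, and your treatment of (DL2a), (DL2b), and the closing remark on deeply ramified fields is correct.

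In (DL2c), however, you take an unnecessary detour and leave the actual argument unfinished. There is no need to invoke the minimal polynomial of $x_0$ or to do any ``cross-term bookkeeping'': since $v(x_0^{\,j})=jvx_0\notin vK$, the element $x:=x_0^{\,j}$ again satisfies the hypothesis of the theorem, and you can simply rerun the \emph{same} valuation-basis argument with the basis $1,x,\dots,x^{q-1}$ of $L$ over $K$. Choosing $j$ so that the image of $vx$ in $\tilde H_\cE/H_\cE\cong\Z$ is the positive generator $1$, one picks $\epsilon$ in $(vx+vK)\cap\tilde H_\cE$ with image $1$. For $\beta_i\notin\tilde H_\cE$ one has $i\epsilon\in\tilde H_\cE<\beta_i$; for $\beta_i\in\tilde H_\cE$ its image lies in $i+q\Z$ and is positive, hence $\ge i=$ image of $i\epsilon$. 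The only remaining subtlety is the borderline case where $\beta_i$ and $i\epsilon$ have \emph{equal} image in $\Z$: then $i\epsilon-\beta_i\in H_\cE$ need not be $\le 0$, and one must shift $\epsilon$ by a sufficiently negative element of $H_\cE$ (which keeps $\epsilon>0$, since $\epsilon>H_\cE$) to force $i\epsilon\le\beta_i$ for all such $i$ simultaneously. When $H_\cE=\{0\}$ this shift is vacuous and a single $\epsilon$ works for every $z$, giving $\cO_L=\cO_K[cx]$.
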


With $x$ as in this theorem, we have:
\begin{proposition}{\cite[Proposition 3.4]{CuKu}}             \label{propgen}
The $\cO_L$-ideal $\cM_\cE$ is equal to the $\cO_L$-ideal
\begin{equation}                 \label{Ixdef}
I_x\>:=\> (cx\mid c\in K \mbox{ with } vcx>0)\>.
\end{equation}
\end{proposition}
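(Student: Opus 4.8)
The plan is to pass from ideals to their value sets. Since the correspondence $M\mapsto vM$ is a bijection between the $\cO_L$-submodules of $L$ and the final segments of $vL$, it suffices to prove $vI_x=v\cM_\cE$. By (\ref{vM}) applied to $\cO_\cE=\cO(H_\cE)$, for which $H(\cO_\cE)=H_\cE$, we have $v\cM_\cE=\{\alpha\in vL\mid\alpha>H_\cE\}$, while $vI_x$ is the final segment of $vL$ generated by the values $v(cx)$ with $c\in K$ and $v(cx)>0$. One inclusion is immediate: for such a generator, $v(cx)=vc+vx\notin vK\supseteq H_\cE$ since $vx\notin vK$ but $vc\in vK$; as $v(cx)>0$ and $H_\cE$ is convex, this forces $v(cx)>H_\cE$, and hence $vI_x\subseteq v\cM_\cE$.

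First I would isolate the combinatorial core as a lemma: \emph{if $\Gamma$ is an ordered abelian group and $\Delta\le\Gamma$ a subgroup containing no nontrivial convex subgroup of $\Gamma$, then $\Gamma\setminus\Delta$ is coinitial in $\Gamma^{>0}$} (every positive element dominates some element of $\Gamma\setminus\Delta$). I would apply this with $\Gamma=vL/H_\cE$ and $\Delta=vK/H_\cE$; the hypothesis holds precisely because $H_\cE$ is the largest convex subgroup of $vL$ contained in $vK$, so no convex subgroup of $vL$ properly containing $H_\cE$ lies in $vK$. To prove the lemma, suppose some $\tau>0$ satisfies $(0,\tau]\cap\Gamma\subseteq\Delta$; by symmetry $[-\tau,\tau]\cap\Gamma\subseteq\Delta$. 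The largest convex subgroup $N$ of $\Gamma$ not containing $\tau$ then satisfies $N\subseteq(-\tau,\tau)\cap\Gamma\subseteq\Delta$, so $N=\{0\}$ by hypothesis; hence the convex subgroup $C$ generated by $\tau$ is the smallest nonzero convex subgroup of $\Gamma$ and is archimedean. Using division with remainder against $\tau$ inside $C\hookrightarrow\R$, together with $C\cap(0,\tau]\subseteq\Delta$ and $\tau\in\Delta$, one gets $C\subseteq\Delta$, again contradicting the hypothesis. This step is where the defining maximality of $H_\cE$ is consumed, and I expect it to be the main obstacle.

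Next I would record the elementary fact that converts membership in a single ring $\cO_K[c_0x]$ into a value estimate: \emph{if $b\in\cO_K[c_0x]$ and $vb\notin vK$, then $vb\ge v(c_0x)$.} Writing $b=\sum_i a_i(c_0x)^i$ with $a_i\in\cO_K$, the element $b-a_0$ lies in $c_0x\cdot\cO_K[c_0x]\subseteq c_0x\,\cO_L$, so $v(b-a_0)\ge v(c_0x)$; since $va_0\in vK$ while $vb\notin vK$, the term $a_0$ cannot strictly dominate in $b=a_0+(b-a_0)$, whence $vb\ge v(b-a_0)\ge v(c_0x)$.

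Finally I would assemble the reverse inclusion $v\cM_\cE\subseteq vI_x$ from these two facts together with the presentation (\ref{(DL2)}). Take $\theta\in vL$ with $\theta>H_\cE$. If $\theta\notin vK$, choose $b$ with $vb=\theta$; then $b\in\cO_L$, so by (\ref{(DL2)}) $b\in\cO_K[c_0x]$ for some $c_0$ with $v(c_0x)>0$, and the value estimate gives $\theta=vb\ge v(c_0x)$, so $\theta\in vI_x$. If instead $\theta\in vK$, the coinitiality lemma, transferred from $vL/H_\cE$, supplies $\psi\in vL\setminus vK$ with $0<\psi\le\theta$; since $\theta\in vK$ but $\psi\notin vK$ we in fact get $\psi<\theta$. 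Applying the previous case to $\psi$ produces a generator value $v(c_0x)\le\psi<\theta$, so again $\theta\in vI_x$. Thus $v\cM_\cE\subseteq vI_x$, the two final segments coincide, and therefore $\cM_\cE=I_x$. The point of invoking (\ref{(DL2)}) here is that it transports the coinitiality of $vL\setminus vK$, a statement about \emph{all} nontrivial cosets of $vK$, into coinitiality of the single coset $vx+vK$ that governs $I_x$.
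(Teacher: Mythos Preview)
The paper does not give its own proof of this proposition; it is quoted verbatim as \cite[Proposition 3.4]{CuKu}. So there is nothing to compare against, and the relevant question is simply whether your argument is correct. It is.

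Your reduction to value sets is the right move, and the easy inclusion $vI_x\subseteq v\cM_\cE$ is handled cleanly. The combinatorial lemma (a subgroup of an ordered abelian group containing no nontrivial convex subgroup has complement coinitial in the positive cone) is exactly the right engine; its proof via the archimedean component generated by $\tau$ and division with remainder is correct, and the hypothesis is verified for $\Delta=vK/H_\cE\le\Gamma=vL/H_\cE$ precisely by the maximality defining $H_\cE$. The value estimate extracted from $b\in\cO_K[c_0x]$ is also correct: if $va_0<v(b-a_0)$ then $vb=va_0\in vK$, contradiction, so $vb\ge v(b-a_0)\ge v(c_0x)$.

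One cosmetic point in the transfer step: from $\bar\psi\le\bar\theta$ in $vL/H_\cE$ one does not directly get $\psi\le\theta$ in $vL$ for an arbitrary lift. What saves you is that in the case $\theta\in vK$ one has $\bar\theta\in\Delta$ while $\bar\psi\notin\Delta$, so in fact $\bar\psi<\bar\theta$ strictly, and \emph{then} every lift satisfies $\psi<\theta$ (and $\psi>H_\cE\ge 0$). Your sentence ``since $\theta\in vK$ but $\psi\notin vK$ we in fact get $\psi<\theta$'' reaches the right conclusion for this reason, though as written it presupposes $\psi\le\theta$ in $vL$ before arguing $\psi\ne\theta$. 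This is easily rephrased and does not affect the validity of the proof.
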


\begin{corollary}
The set $\{vcx\mid c\in K \mbox{ with } vcx>0\}$ is coinitial in $vK^{>0}\setminus
H_\cE\,$.
\end{corollary}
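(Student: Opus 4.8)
The plan is to reduce the statement to the identity $v\cM_\cE=\{\alpha\in vL\mid\alpha>H_\cE\}$ together with an explicit description of the value set of $I_x$ as a final segment of $vL$. Write $S:=\{vcx\mid c\in K,\ vcx>0\}$ for the set in question. First I would invoke Proposition~\ref{propgen} to replace $\cM_\cE$ by $I_x$, so that it suffices to understand $vI_x$. Since $\cO_\cE=\cO(H_\cE)$ is the coarsening of $\cO_L$ associated with $H_\cE$ and the assignments $\cO\mapsto H(\cO)$ and $H\mapsto\cO(H)$ are mutually inverse (as recorded right after (\ref{O(H)})), we have $H(\cO_\cE)=H_\cE$, whence (\ref{vM}) gives $v\cM_\cE=\{\alpha\in vL\mid\alpha>H_\cE\}$.

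Next I would compute $vI_x$ directly from its definition (\ref{Ixdef}). As an $\cO_L$-ideal, $I_x$ is generated by the elements $cx$ with $c\in K$ and $vcx>0$; since $\cO_L$ consists of the elements of nonnegative value, the value of any nonzero element of $I_x$ is bounded below by some $v(cx)\in S$, and conversely every $\alpha\in vL$ with $\alpha\ge v(cx)$ is realized by $o\cdot cx$ for a suitable $o\in\cO_L$. Hence $vI_x=\{\alpha\in vL\mid\alpha\ge s\text{ for some }s\in S\}$ is exactly the final segment of $vL$ generated by $S$. Combining this with the previous paragraph and the equality $\cM_\cE=I_x$, I obtain that the final segment of $vL$ generated by $S$ equals $\{\alpha\in vL\mid\alpha>H_\cE\}$.

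Finally I would read off coinitiality. Because $H_\cE$ is a convex subgroup of $vK$, an element $\gamma\in vK$ satisfies $\gamma>0$ and $\gamma\notin H_\cE$ if and only if $\gamma>H_\cE$; thus $vK^{>0}\setminus H_\cE=\{\gamma\in vK\mid\gamma>H_\cE\}$. Given any such $\gamma$, it lies in $\{\alpha\in vL\mid\alpha>H_\cE\}=vI_x$, so by the final-segment description there is $s\in S$ with $s\le\gamma$; this is precisely the assertion that $S$ is coinitial in $vK^{>0}\setminus H_\cE$.

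I do not expect a genuine obstacle here: the whole content is the translation of the ideal equality $\cM_\cE=I_x$ into the value group via (\ref{vM}). The only points requiring care are the routine verification that $vI_x$ is the final segment generated by $S$ (rather than something smaller) and the convexity argument identifying $vK^{>0}\setminus H_\cE$ with $\{\gamma\in vK\mid\gamma>H_\cE\}$. I would emphasize that coinitiality only requires reaching below each element of $vK^{>0}\setminus H_\cE$, so the fact that individual elements of $S$ may lie below all of $vK^{>0}\setminus H_\cE$ (as already happens in rank one with a discrete quotient, where $vx\notin vK$) causes no difficulty.
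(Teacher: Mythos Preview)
Your argument is correct and is precisely the intended one: the paper states this as an immediate corollary of Proposition~\ref{propgen} without further proof, and the details you supply---identifying $v\cM_\cE$ via (\ref{vM}), recognizing $vI_x$ as the final segment of $vL$ generated by $S$, and using convexity of $H_\cE$ in $vK$ to rewrite $vK^{>0}\setminus H_\cE$---are exactly what is needed to unpack it.
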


\begin{lemma}
For every $x_0$ with $vx_0\notin vK$ we have $I_{x_0}\subseteq I_x\,$.
\end{lemma}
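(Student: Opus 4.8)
The plan is to reduce the desired inclusion of $\cO_L$-ideals to a comparison of values, exploiting that $I_x$ has already been identified with the maximal ideal of the coarsening $\cO_\cE\,$. Indeed, by Proposition~\ref{propgen} we have $I_x=\cM_\cE\,$, and by (\ref{vM}) its value set is $v\cM_\cE=\{\alpha\in vL\mid \alpha>H_\cE\}$; equivalently, $\cM_\cE=\{b\in L\mid vb>H_\cE\}$ (using the convention $v0=\infty$). Since $I_{x_0}$ is by definition the $\cO_L$-ideal generated by the elements $cx_0$ with $c\in K$ and $v(cx_0)>0$, and $\cM_\cE=I_x$ is itself an $\cO_L$-ideal, it will suffice to verify that each such generator already lies in $\cM_\cE\,$; that is, that $v(cx_0)>H_\cE$ whenever $c\in K$ and $v(cx_0)>0\,$.

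To carry this out, I would first record that $H_\cE\subseteq vK\,$, which holds because $H_\cE$ was defined as a convex subgroup of $vK\,$. Now fix $c\in K$ with $v(cx_0)>0$ and write $v(cx_0)=vc+vx_0$ with $vc\in vK\,$. As $vx_0\notin vK\,$, the value $v(cx_0)$ lies in the coset $vx_0+vK\,$, which is disjoint from $vK$ and hence from $H_\cE\,$. Thus $v(cx_0)\notin H_\cE\,$, while $v(cx_0)>0\,$. The final step is to upgrade ``positive and outside $H_\cE$'' to ``above $H_\cE$'': by convexity of $H_\cE\,$, any $h\in H_\cE$ with $h\geq 0$ and $h\geq v(cx_0)$ would force $v(cx_0)\in H_\cE\,$, a contradiction, while for $h<0$ one has $v(cx_0)>0>h$ trivially; hence $v(cx_0)>h$ for all $h\in H_\cE\,$, i.e.\ $v(cx_0)>H_\cE\,$. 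Therefore $cx_0\in\cM_\cE=I_x\,$, and since this holds for every generator of $I_{x_0}\,$, we obtain $I_{x_0}\subseteq I_x\,$.

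I do not anticipate a serious obstacle: the argument is driven entirely by the disjointness of the coset $vx_0+vK$ from the subgroup $H_\cE\subseteq vK\,$. The two points that deserve care are the invocation of Proposition~\ref{propgen} to make the value set of $I_x$ explicit (so that membership in $\cM_\cE$ becomes the transparent condition $vb>H_\cE$), and the short convexity argument turning non-membership in $H_\cE$ into strict dominance over $H_\cE\,$.
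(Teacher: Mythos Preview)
Your proof is correct, but it follows a different route from the paper's. The paper does not invoke Proposition~\ref{propgen}; instead it uses the ring decomposition (\ref{(DL2)}) from Theorem~\ref{thmgen}: given $c_0\in K$ with $v(c_0x_0)>0$, one has $c_0x_0\in\cO_L=\bigcup_c\cO_K[cx]$, so $c_0x_0\in\cO_K[cx]$ for some $c$ with $v(cx)>0$. Writing $c_0x_0=\sum_{j=0}^{q-1}d_j(cx)^j$ with $d_j\in\cO_K$, every term with $j\ge 1$ lies in $(cx)\cO_L\subseteq I_x$; and since $v(c_0x_0)\notin vK$, the constant term cannot be the one of minimal value, forcing $vd_0\ge v(cx)$ and hence $d_0\in(cx)\cO_L$ as well. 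Thus $c_0x_0\in I_x$.

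Your argument bypasses the ring decomposition entirely: you pass through $I_x=\cM_\cE$ and reduce to the value-group observation that $v(c_0x_0)$, being positive and lying in the coset $vx_0+vK$ disjoint from $H_\cE\subseteq vK$, must dominate $H_\cE$ by convexity. This is shorter and more transparent. The trade-off is that you rely on Proposition~\ref{propgen}, so your lemma is not logically independent of it, whereas the paper's proof keeps the lemma as a direct consequence of the decomposition (\ref{(DL2)}) alone.
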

\begin{proof}
Take $c_0\in K$ such that $vc_0x_0>0$, so that $c_0x_0\in\cO_L\,$. If $x$ is as in
Theorem~\ref{thmgen}, then there is $c\in K$ such that $c_0x_0\in\cO_K[cx]$.
Consequently, $c_0x_0\in I_x\,$. This proves that $I_{x_0}\subseteq I_x\,$.
\end{proof}

From this lemma together with Proposition~\ref{propgen} we obtain the following
parameter free $\cL_{{\rm val},K}\,$-definition of $\cM_\cE$:
\begin{equation}
\cM_\cE \>=\> \{b\in L\mid \exists x\in L\setminus K: (\forall y\in K: \>vx\ne vy)
\>\wedge\> \exists c\in K: \; va\geq vcx>0\}\>.
\end{equation}

From this, we can define $\cO_\cE$ by including the units of $\cO_\cE$:
\[
\cO_\cE \>=\> \{b\in L\mid \forall x\in \cM_\cE:\; -vx< vb\} \>.
\]

\mn
%
%
\subsection{The ramification ideal}                \label{sectdefI_E}
Take a unibranched defectless Galois extension $\cE=(L|K,v)$ of prime degree
$p=(vL:vK)=\chara Kv$. We denote by $I_\cE$ the ramification ideal of $\cE$.
From \cite[Theorem 3.15]{KuTopI} we obtain:
\begin{theorem}                              \label{ThmASKgen}
1) If $\cE$ is an Artin-Schreier extension, then it admits an Artin-Schreier
generator $\vartheta$ of value $v\vartheta\leq 0$ such that $v\vartheta\notin vK$. For
every such $\vartheta$,
\begin{equation}
I_{\cE}\>=\> \left(\frac{1}{\vartheta} \right)\>.
\end{equation}
\sn
2) Let $\cE$ be a Kummer extension. Then there are two cases:
\sn
a) \ $\cE$ admits a Kummer generator $\eta$ such that $0< v\eta\notin vK$. For
every such $\eta$,
\begin{equation}
I_{\cE}\>=\> (\zeta_p-1)\>.
\end{equation}
\sn
b) \ $\cE$ admits a Kummer generator $\eta$ such that $\eta$ is a $1$-unit
with $v(\zeta_p-1)\geq v(\eta-1)\notin vK$. For every such $\eta$,
\begin{equation}                   \label{Icase2b}
I_{\cE}\>=\> \left(\frac{\zeta_p-1}{\eta-1}\right)\>.
\end{equation}
\end{theorem}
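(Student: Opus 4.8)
The plan is to reduce everything to the known generation of the ramification ideal
$I_\cE$ by the elements $(\sigma b - b)/b$ and to the structure theory of defectless
extensions of prime degree recorded in the cited Theorem~3.15 of \cite{KuTopI}. Since
$\cE$ is a defectless Galois extension of prime degree $p=(vL:vK)=\chara Kv$, it is wildly
ramified, and the first task is to produce a generator (Artin-Schreier or Kummer)
whose value lies outside $vK$. In the equal characteristic case this is exactly the
statement that there is an Artin-Schreier generator $\vartheta$ with $v\vartheta\le 0$,
$v\vartheta\notin vK$; in mixed characteristic one obtains a Kummer generator $\eta$ that
is either of value $0<v\eta\notin vK$ (the unramified-in-residue case) or a $1$-unit with
$v(\eta-1)\notin vK$ bounded by $v(\zeta_p-1)$. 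The existence of such a normalized
generator is the content I would quote directly from \cite[Theorem 3.15]{KuTopI}, so I
would state at the outset that these existence assertions are supplied by that reference.

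First I would treat part~1). Having fixed $\vartheta$ with $\vartheta^p-\vartheta\in K$ and
$v\vartheta\notin vK$, I would compute $\Sigma_\sigma$ for the nontrivial automorphism
$\sigma$. Since $\sigma\vartheta-\vartheta$ is a nonzero element of the prime field (a
root of unity difference equal to $1$ in the Artin-Schreier case), one has
$v(\sigma\vartheta-\vartheta)=0$, and more generally for $b=g(\vartheta)$ the quantity
$(\sigma b-b)/b$ is controlled by $1/\vartheta$. The key computation is that among all the
generators $(\sigma b-b)/b$ with $b\in L^\times$, the minimal value is attained and equals
$v(1/\vartheta)=-v\vartheta$, so that the ramification ideal, being principal in the
defectless case (as $vL/vK$ is finite and the relevant final segment has a least element),
is generated by $1/\vartheta$. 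Concretely I would show both inclusions: $1/\vartheta\in
I_\cE$ by taking $b=\vartheta$ and using $v\sigma\vartheta=v\vartheta$, and conversely that
every generator has value $\ge -v\vartheta$ using that $v\vartheta\notin vK$ forces
$\{1,\vartheta,\dots,\vartheta^{p-1}\}$ to be a valuation-independent basis, so that values
of polynomials in $\vartheta$ are dominated appropriately.

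For part~2) I would run the analogous computation in the two Kummer subcases. In case~a),
with $0<v\eta\notin vK$, one has $\sigma\eta=\zeta_p\eta$, hence
$(\sigma\eta-\eta)/\eta=\zeta_p-1$, giving $\zeta_p-1\in I_\cE$; the reverse inclusion again
uses that $v\eta\notin vK$ makes the powers of $\eta$ span the value group cosets so that
no generator has smaller value than $v(\zeta_p-1)$. In case~b), with $\eta$ a $1$-unit and
$v(\eta-1)\notin vK$, the natural candidate is $b=\eta-1$: then
$\sigma b-b=\sigma\eta-\eta=(\zeta_p-1)\eta$, so $(\sigma b-b)/b=(\zeta_p-1)\eta/(\eta-1)$,
and since $v\eta=0$ this has value $v(\zeta_p-1)-v(\eta-1)$, yielding
$(\zeta_p-1)/(\eta-1)\in I_\cE$; the minimality of this value among all generators is what
pins down $I_\cE=\left((\zeta_p-1)/(\eta-1)\right)$.

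The main obstacle I anticipate is the minimality argument, that is, proving that the
displayed generator really attains the \emph{smallest} value among all $(\sigma b-b)/b$
rather than merely lying in $I_\cE$. This is where the hypothesis $v\vartheta\notin vK$
(respectively $v\eta\notin vK$ or $v(\eta-1)\notin vK$) does the essential work: it
guarantees that the chosen generator is, in the language of ramification theory, a
\emph{uniformizing} choice realizing the break of the extension, so that all other
$b\in L^\times$ can be written with respect to the valuation-independent basis furnished by
powers of the generator and their $(\sigma b-b)/b$ values are bounded below by the extremal
one. I would handle this by invoking the fact, contained in \cite[Theorem 3.15]{KuTopI}
and the surrounding discussion of \cite{KuTopI}, that for these normalized generators the
final segment $\Sigma_\sigma$ has a least element equal to the value computed above; the
independence of $I_\cE$ from the particular normalized generator then follows because any
two such generators differ by the action accounted for in \cite[Theorems~3.4 and~3.5]{KR},
which leaves the value of the generator of $I_\cE$ unchanged.
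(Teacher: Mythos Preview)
The paper does not prove this theorem at all: the line immediately preceding the statement reads ``From \cite[Theorem~3.15]{KuTopI} we obtain:'', so the entire result is quoted wholesale from that reference. Your proposal, by contrast, sketches an actual argument---you exhibit the candidate generator in each case, verify that it lies in $I_\cE$ by a direct computation of $(\sigma b-b)/b$ for a well-chosen $b$, and then attempt the minimality step.

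Your computations for membership are correct in all three cases. The comparison with the paper is therefore simple: the paper outsources the full statement to \cite{KuTopI}, while you try to reconstruct the proof. However, your reconstruction is somewhat circular, because at the two hard points---the existence of a normalized generator with value outside $vK$, and the minimality of the displayed value in $\Sigma_\sigma$---you yourself fall back on \cite[Theorem~3.15]{KuTopI}. Once you invoke that theorem for those steps, there is nothing left to prove: the whole content of Theorem~\ref{ThmASKgen} is precisely the existence of such a generator together with the identification of $I_\cE$, and that is exactly what \cite[Theorem~3.15]{KuTopI} already gives. So your proposal is not wrong, but the independent content it adds (the explicit membership computations) is the easy direction, and the substantive direction is still being cited rather than proved. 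If you want a self-contained argument, the minimality step needs a genuine proof that for arbitrary $b=\sum c_j\vartheta^j$ (respectively $b=\sum c_j\eta^j$ or $b=\sum c_j(\eta-1)^j$) with the $c_j\in K$, the value $v((\sigma b-b)/b)$ is bounded below by the claimed generator's value; this uses the valuation-independence of the basis but also a careful term-by-term estimate that you have only gestured at.
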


Let us show that under the assumptions of the theorem, $I_\cE$ always has a
parameter free $\cL_{{\rm val},K}\,$-definition. If $\cE$ is an Artin-Schreier
extension, then we can define
\begin{eqnarray*}
I_{\cE}\>:=\> \{\,b\in L &|& \exists x\in L:\; x^p-x\in K \>\wedge\>
vx\leq 0\\
&&\wedge\> (\forall y\in K: \>vx\ne vy) \>\wedge\> vb\geq vx\,\}\>.
\end{eqnarray*}

If $\cE$ is a Kummer extension, then in case 2)a) of the theorem, we have $v\eta>0$
and therefore, $v(\eta-1)=0$. Thus, we can also in this case use (\ref{Icase2b}) for
the definition of $I_\cE\,$:

\begin{eqnarray*}
I_{\cE}\>:=\> \{\,b\in L &|& \exists x\in L:\; x^p\in K \>\wedge\>
vp\geq (p-1)v(x-1) \\
&&\wedge\> (\forall y\in K: \> 0<vx\ne vy\,\vee\,0<v(x-1)\ne vy) \\
&&\wedge\> (p-1)vb\geq vp-(p-1)v(x-1)\,\}\>.
\end{eqnarray*}

\mn
%
%
\subsection{Importance of the ideals $I_\cE\,$, $\cO_\cE$ and $\cM_\cE$}\label{sectimp}
We will now summarize the results for defectless Galois extensions $\cE=(L|K,v)$ which
will demonstrate the importance of the ideals $I_\cE\,$, $\cO_\cE$ and $\cM_\cE\,$. If
$[L:K]=q\ne\chara K$, then we will assume that $K$ contains a $q$-th root of unity.

\begin{theorem}{\cite[Theorem 4.6]{CuKu}}                       \label{OASe}
Take an Artin-Schreier extension $\cE=(L|K,v)$ of degree $p=(vL:vK)$. Then
\begin{equation}                \label{Ome=p}
\Omega_{\cO_L|\cO_K}\>\cong\>  I_\cE\cM_\cE/(I_\cE\cM_\cE)^p
\end{equation}
as $\cO_L$-modules; in particular, $\Omega_{\cO_L|\cO_K}\ne 0$.
\end{theorem}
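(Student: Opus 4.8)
The plan is to feed the explicit presentation of $\cO_L$ from Theorem~\ref{thmgen} into the representation $\Omega_{\cO_L|\cO_K}\cong U/UV$ of \cite[Theorem~1.1]{CuKuRz} and then to identify $U$ and $V$ with concrete valuation ideals. Since $\cE$ is an Artin-Schreier extension we have $\chara K=p$, so I would first fix an Artin-Schreier generator $\vartheta$ as in Theorem~\ref{ThmASKgen}~1), with $v\vartheta\leq 0$, $v\vartheta\notin vK$ and $I_\cE=(1/\vartheta)$. Taking $x_0=\vartheta$ in Theorem~\ref{thmgen} gives, in the cases (DL2a) and (DL2b), the presentation $\cO_L=\bigcup_c\cO_K[c\vartheta]$ over the $c\in K$ with $vc\vartheta>0$, so the natural generators are $b_c=c\vartheta$. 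Because $b_c=\frac{c}{c'}b_{c'}$, the coefficients in \cite[Theorem~1.1]{CuKuRz} may be taken to be $a_c=c$, whence
\[
U\>=\>(c\mid c\in K,\ vc\vartheta>0)\>=\>\frac1\vartheta\,(c\vartheta\mid c\in K,\ vc\vartheta>0)\>=\>\frac1\vartheta\,I_\vartheta\>=\>\frac1\vartheta\,\cM_\cE\>,
\]
using $I_\vartheta=\cM_\cE$ from Proposition~\ref{propgen}. As $I_\cE=\frac1\vartheta\cO_L$, this reads $U=I_\cE\cM_\cE$.

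Next I would compute $V$, the ideal generated by the differents $h_c'(b_c)$. The minimal polynomial of $b_c=c\vartheta$ over $K$ is $h_c(Y)=Y^p-c^{p-1}Y-c^p(\vartheta^p-\vartheta)$, and since $\chara K=p$ the term $Y^p$ drops out of the derivative, giving $h_c'(b_c)=-c^{p-1}$ and $V=(c^{p-1}\mid c\in K,\ vc\vartheta>0)$. Comparing value sets yields $V=U^{p-1}$: the inclusion $V\subseteq U^{p-1}$ is clear, and conversely a product $c_1\cdots c_{p-1}$ of generators of $U^{p-1}$ has value $\sum_i vc_i\geq(p-1)\min_i vc_i$, which lies in the final segment $vV$, so $vU^{p-1}\subseteq vV$ and the two $\cO_L$-ideals coincide. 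Hence $UV=U^p=(I_\cE\cM_\cE)^p$, and the representation delivers $\Omega_{\cO_L|\cO_K}\cong U/U^p=I_\cE\cM_\cE/(I_\cE\cM_\cE)^p$. The non-vanishing is then immediate: as $v(1/\vartheta)=-v\vartheta>0$, the ideal $U=I_\cE\cM_\cE$ lies in $\cM_L$ with strictly positive values, so $vU^p$ has strictly larger infimum than $vU$, whence $U^p\subsetneq U$ and $\Omega_{\cO_L|\cO_K}\neq 0$.

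The main obstacle is the discrete case (DL2c), where Theorem~\ref{thmgen}~2) forces $x=\vartheta^{\,j}$ for some $j\in\{1,\dots,p-1\}$ instead of $\vartheta$ itself. Then $U=\frac{1}{\vartheta^{\,j}}\cM_\cE$, and the different is no longer a unit times $c^{p-1}$: writing $b_c=c\vartheta^{\,j}$, rescaling gives $h_c'(b_c)=c^{p-1}d_j$ with $d_j=\prod_{i=1}^{p-1}\bigl(\vartheta^{\,j}-(\vartheta+i)^{\,j}\bigr)$ the different of $\vartheta^{\,j}$. The key computation I would carry out is the value identity $vd_j=(p-1)(j-1)v\vartheta$, obtained from $v\bigl(\vartheta^{\,j}-(\vartheta+i)^{\,j}\bigr)=(j-1)v\vartheta$ (the leading term of $(\vartheta+i)^{\,j}-\vartheta^{\,j}$ is $ji\,\vartheta^{\,j-1}$, and $ji\neq 0$, $v\vartheta<0$). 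Using the refined form $\Omega_{\cO_L|\cO_K}\cong U/b_\gamma^\dagger U^p$ of \cite[Theorem~1.1]{CuKuRz} with $b_\gamma^\dagger=h_c'(b_c)/a_c^{p-1}=d_j$, multiplication by $\vartheta^{\,j-1}$ carries $U$ to $I_\cE\cM_\cE$ and $b_\gamma^\dagger U^p$ to $(I_\cE\cM_\cE)^p$, precisely because $v\bigl(d_j\vartheta^{\,j-1}\bigr)=p(j-1)v\vartheta=v\bigl(\vartheta^{\,p(j-1)}\bigr)$ forces the relevant generators to differ by a unit. Reconciling this value bookkeeping uniformly across the types (DL2a), (DL2b) and (DL2c) is the technical heart of the argument; everything else is formal manipulation of valuation ideals through their value sets.
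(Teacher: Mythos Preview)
The paper does not give its own proof of this theorem; it is quoted verbatim from \cite[Theorem~4.6]{CuKu}. Your reconstruction is correct and is exactly the argument one expects there: feed the presentation of $\cO_L$ from Theorem~\ref{thmgen} into the $U/UV$ machinery of \cite[Theorem~1.1]{CuKuRz}, identify $U$ via Proposition~\ref{propgen} and $I_\cE$ via Theorem~\ref{ThmASKgen}~1), and compute the different. Your handling of case (DL2c) is also right: the value identity $vd_j=(p-1)(j-1)v\vartheta$ is correct (the leading term $ji\vartheta^{j-1}$ has a unit coefficient since $1\le i,j\le p-1$), and the check $v(d_j\vartheta^{\,j-1})=v(\vartheta^{\,p(j-1)})$ is precisely what is needed so that multiplication by $\vartheta^{\,j-1}$ carries $U/b_\gamma^\dagger U^p$ to $I_\cE\cM_\cE/(I_\cE\cM_\cE)^p$.

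One small point worth tightening: the non-vanishing argument via ``strictly larger infimum'' is informal when $H_\cE\ne\{0\}$, since $vU$ then need not have an infimum in $vL$. A clean fix is to work with the coarsening $v_\cE$ associated to $\cO_\cE$: there $\delta:=v_\cE(1/\vartheta)>0$ (because $-v\vartheta\notin H_\cE$), so $(I_\cE\cM_\cE)^p\subseteq I_\cE^p\cO_L$ has all $v_\cE$-values $\ge p\delta$, while $1/\vartheta^2\in I_\cE\cM_\cE$ has $v_\cE$-value $2\delta$; for $p\ge 3$ this already shows $1/\vartheta^2\notin(I_\cE\cM_\cE)^p$, and for $p=2$ one observes $1/\vartheta^2\in(I_\cE\cM_\cE)^2=(1/\vartheta^2)\cM_\cE^2$ would force $1\in\cM_\cE^2$, which is absurd.
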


The following is a reformulation of \cite[Theorem 4.6]{CuKu}.
\begin{theorem}                                     \label{OKume}
Let $\cE=(L|K,v)$ be a Kummer extension of prime degree $q$ with $\rme(L|K)=q$.

If $q\ne\chara Kv$, then
\begin{equation}                                     \label{OKumec1}
\Omega_{\cO_L|\cO_K}\>\cong\> \cM_\cE/\cM_\cE^q \>
\end{equation}
as $\cO_L$-modules.

If $q=\chara Kv$, then
\begin{equation}                                     \label{OKumecp}
\Omega_{\cO_L|\cO_K}\>\cong\> I_\cE\cM_\cE/(I_\cE\cM_\cE)^q\>
\end{equation}
as $\cO_L$-modules.

In case 2)a) of Theorem~\ref{ThmASKgen}, we have that $\Omega_{\cO_L|\cO_K}= 0$ if and
only if $q\notin\cM_\cE$ and $\cM_\cE$ is a nonprincipal $\cO_\cE$-ideal. The
condition $q\notin\cM_\cE$ always holds when $q\ne\chara Kv$.

In case 2)b) of Theorem~\ref{ThmASKgen}, we always have that
$\Omega_{\cO_L|\cO_K}\ne 0$.
\end{theorem}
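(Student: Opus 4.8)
The plan is to reduce the last theorem (Theorem~\ref{OKume}) to the presentation of $\Omega_{\cO_L|\cO_K}$ already supplied by \cite[Theorem~4.6]{CuKu}, and then to read off the two displayed isomorphisms together with the vanishing criteria from the structure of the ideals $I_\cE$, $\cM_\cE$ and the different-type element described in Theorem~\ref{ThmASKgen}. Since the statement is advertised as ``a reformulation of \cite[Theorem~4.6]{CuKu}'', the real content is to match up the quantities $U$, $V$ and $b_\gamma^\dagger$ of the general presentation (\ref{UV}) with the explicit ideals $\cM_\cE$ and $I_\cE$ in the Kummer case, using the uniformizer/generator supplied by Theorem~\ref{thmgen} and Proposition~\ref{propgen}.

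First I would invoke Theorem~\ref{thmgen} to obtain, in each of the cases $q\ne\chara Kv$ and $q=\chara Kv$, an explicit generating element $x$ (a power $x_0^j$ of a value-jumping element, or a suitable $\xi$-type $1$-unit in case 2)b) of Theorem~\ref{ThmASKgen}) for which (\ref{(DL2)}) holds, i.e.\ $\cO_L=\bigcup \cO_K[cx]$. By Proposition~\ref{propgen} the ideal $U$ in the presentation (\ref{UV}) is exactly $I_x=\cM_\cE$. Next I would compute the ideal $V$ generated by the differents $h'_\alpha(b_\alpha)$ along this chain: when $q\ne\chara Kv$ the extension is tamely ramified, so the different has trivial valuation contribution relative to $\cM_\cE$ and one gets $\Omega_{\cO_L|\cO_K}\cong U/UV\cong \cM_\cE/\cM_\cE^q$, yielding (\ref{OKumec1}); when $q=\chara Kv$ the wild part contributes the ramification ideal $I_\cE$ as an extra factor, so that $V$ effectively multiplies $U$ by $I_\cE$ and one obtains $\Omega_{\cO_L|\cO_K}\cong I_\cE\cM_\cE/(I_\cE\cM_\cE)^q$, giving (\ref{OKumecp}). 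In both cases the powers match the degree $q=\rme(L|K)$.

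For the vanishing statements I would analyze the two cases of Theorem~\ref{ThmASKgen}~2) separately. In case 2)a) we have $v\eta>0\notin vK$ and $I_\cE=(\zeta_p-1)$; here $\Omega_{\cO_L|\cO_K}$ is computed via $\cM_\cE/\cM_\cE^q$ (resp.\ its $I_\cE$-twist), and this module vanishes precisely when $\cM_\cE$ is idempotent-like in the appropriate sense, which translates into $\cM_\cE$ being nonprincipal (so that $\cM_\cE=\cM_\cE^q$ as $\cO_\cE$-ideals) \emph{and} the factor $q=\zeta_p$-contribution not obstructing, i.e.\ $q\notin\cM_\cE$. The clause ``$q\notin\cM_\cE$ always holds when $q\ne\chara Kv$'' is immediate since then $vq=0$, so $q$ is a unit of $\cO_L$ and a fortiori not in $\cM_\cE$. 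In case 2)b) the generator $\eta$ is a wild $1$-unit with $v(\zeta_p-1)\geq v(\eta-1)\notin vK$ and $I_\cE=\bigl(\tfrac{\zeta_p-1}{\eta-1}\bigr)$; here the different genuinely contributes a nontrivial wild factor, so $I_\cE\cM_\cE$ is a proper ideal strictly containing its $q$-th power, forcing $\Omega_{\cO_L|\cO_K}=I_\cE\cM_\cE/(I_\cE\cM_\cE)^q\ne 0$.

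\textbf{The main obstacle} I expect is the computation of $V$ in the wild Kummer case $q=\chara Kv$, case 2)b): one must verify that the different $h'(\eta)$ of the $1$-unit Kummer generator $\eta$ produces exactly the factor $I_\cE$ recorded in (\ref{Icase2b}), rather than some coarser or finer ideal, and that this holds uniformly along the whole chain of simple extensions $\cO_K[cx]$ so that the limit ideal $V$ is correctly identified. The tame case is routine, and the idempotency/nonprincipality equivalence in case 2)a) is a formal consequence of $\cM_\cE$ being an ideal of the valuation ring $\cO_\cE=\cO(I_\cE)$ with value group $vL/H_\cE$; the delicate point is the exact valuation-theoretic bookkeeping of the wild different against the associated convex subgroup $H_\cE$, which is where the hypotheses $v(\zeta_p-1)\geq v(\eta-1)$ and $v(\eta-1)\notin vK$ of Theorem~\ref{ThmASKgen}~2)b) must be used carefully.
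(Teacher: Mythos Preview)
The paper does not prove Theorem~\ref{OKume} at all: it is stated with the preamble ``The following is a reformulation of \cite[Theorem 4.6]{CuKu}'' and left without argument. Your proposal correctly recognizes this and sketches the derivation one would carry out from the general presentation $\Omega_{\cO_L|\cO_K}\cong U/UV$ of \cite[Theorem~1.1]{CuKuRz} together with Theorem~\ref{thmgen}, Proposition~\ref{propgen} (giving $U=I_x=\cM_\cE$) and the different computations of Theorem~\ref{ThmASKgen}; this is exactly the route taken in \cite{CuKu}, so your approach is the intended one.

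One small remark on your bookkeeping: in the tame case $q\ne\chara Kv$ you should not say that ``the different has trivial valuation contribution''. What actually happens (and what you need for $UV=\cM_\cE^q$) is that the alternative presentation $\Omega\cong U/b_\gamma^\dagger U^q$ applies with $b_\gamma^\dagger=h_\gamma'(b_\gamma)/a_\gamma^{q-1}$ a unit, because $h_\gamma'(b_\gamma)$ has value $(q-1)vb_\gamma$ up to a $q$-unit when $q\ne\chara Kv$. So the different is not trivial, but after dividing by $a_\gamma^{q-1}$ it becomes a unit; this is the correct mechanism for (\ref{OKumec1}). The rest of your outline, including the identification of the extra factor $I_\cE$ in the wild case via (\ref{Icase2b}) and the nonprincipality/idempotency analysis for the vanishing criterion in case 2)a), is on target.
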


Let us compute the annihilators of $\Omega_{\cO_L|\cO_K}$ in the above cases whenever
it is nonzero. The following is Poposition 3.21 of \cite{Kucuts}, adapted to our current
notation.
\begin{proposition}                        \label{ann}
Take $n\geq 2$, $a\in\cO_L$ and $\cO$ a valuation ring containing $\cO_L$ with maximal
ideal $\cM$. Assume that $(a\cM)^n\ne a\cM$. Then the following statements hold.
\sn
1) We have that
\[
\ann a\cM/(a\cM)^n \>=\> \left\{
\begin{array}{ll}
(a\cM)^{n-1} & \mbox{if $\cM$ is a principal $\cO$-ideal, } \\
(a\cO)^{n-1} = a^{n-1}\cO & \mbox{if $\cM$ is a nonprincipal $\cO$-ideal.}
\end{array}\right.
\]
2) \ The annihilator is equal to $\cM_L$ if and only if $n=2$, $a\notin\cM_L=\cM$ and
$\cM_L$ is a principal $\cO_L$-ideal.
\end{proposition}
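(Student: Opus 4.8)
The plan is to identify the annihilator with an ideal quotient and then evaluate it by passing to the value group of $\cO$. Write $J=a\cM$; since the hypothesis $(a\cM)^n\ne a\cM$ excludes $J=0$, we have $a\ne0$ and $\cO\ne L$, so $\cM\ne 0$. Let $w$ be the valuation associated with $\cO$, as in (\ref{wa}). First I would observe that for $x\in L$ one has $x\bigl(J/J^n\bigr)=0$ exactly when $xJ\subseteq J^n$, so the set $(J^n:J):=\{x\in L\mid xJ\subseteq J^n\}$ is an $\cO_L$-ideal annihilating $J/J^n$ and is visibly the largest such; hence $\ann\,J/J^n=(J^n:J)$. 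Clearing the factor $a$ (legitimate as $a\ne0$) gives
\[
(J^n:J)\>=\>\{x\in L\mid x\cM\subseteq a^{n-1}\cM^n\}\>=\>a^{n-1}(\cM^n:\cM)\>,
\]
so the whole computation reduces to determining $(\cM^n:\cM)$ for the valuation ring $\cO$.

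Next I treat the two cases. If $\cM$ is a principal $\cO$-ideal, say $\cM=t\cO$, then $\cM^k=t^k\cO$ and a one-line computation yields $(\cM^n:\cM)=t^{n-1}\cO=\cM^{n-1}$, so that $\ann\,J/J^n=a^{n-1}\cM^{n-1}=(a\cM)^{n-1}$. If $\cM$ is nonprincipal, then the value group $wL$ has no smallest positive element, and the key point is that $\cM$ is idempotent: $w(\cM^2)$ consists of all $\gamma$ with $\gamma\ge\alpha_1+\alpha_2$ for some positive $\alpha_1,\alpha_2\in wL$, and since positive elements of $wL$ accumulate at $0$ one finds, for any $\gamma>0$, positive $\alpha_1,\alpha_2$ with $\alpha_1+\alpha_2<\gamma$; thus $w(\cM^2)=w(\cM)$ and $\cM^2=\cM$, whence $\cM^n=\cM$ for all $n\ge1$. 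Consequently $(\cM^n:\cM)=(\cM:\cM)=\cO$, the invariance valuation ring of $\cM$, and so $\ann\,J/J^n=a^{n-1}\cO=(a\cO)^{n-1}$. This establishes part 1.

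Finally, part 2 is read off from part 1 together with one structural remark: in either case $\ann\,J/J^n$ is an $\cO$-module, so if it equals $\cM_L$ then $\cM_L$ is an $\cO$-ideal. But a maximal ideal is an ideal of no properly larger valuation ring (if $\cO\supsetneq\cO_L$, pick $b\in\cO$ with $wb<0$; then $b\cM_L\not\subseteq\cM_L$), so $\cM_L$ being an $\cO$-ideal forces $\cO=\cO_L$ and hence $\cM=\cM_L$. The nonprincipal case is then impossible, for there $\ann\,J/J^n=a^{n-1}\cO_L$ would be a principal $\cO_L$-ideal equal to the nonprincipal ideal $\cM_L$. Thus we are in the principal case with $\cO=\cO_L$ and $\cM=\cM_L=t\cO$, and the requirement $(a\cM)^{n-1}=\cM$ translates into $(n-1)wa+(n-2)wt=0$ with $wt>0$ and $wa\ge0$; as $n\ge2$ this forces $n=2$ and $wa=0$, i.e. $a\notin\cM_L$. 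Conversely, these three conditions give $a\cM=\cM_L$ and $\ann\,J/J^n=(a\cM)^{1}=\cM_L$, and one checks immediately that they are compatible with $(a\cM)^2\ne a\cM$ (indeed $\cM^2=t^2\cO\subsetneq t\cO=\cM$). The main obstacle is the bookkeeping in this last step, in particular the observation that the annihilator being an $\cO$-module forces the coarsening $\cO$ to collapse to $\cO_L$; everything else is routine valuation theory.
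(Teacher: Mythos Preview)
Your argument is correct and is the natural direct computation; the paper does not actually prove this proposition but merely cites \cite[Proposition~3.21]{Kucuts}, so there is no in-paper proof to compare against. Your reduction $\ann(J/J^n)=(J^n:J)=a^{n-1}(\cM^n:\cM)$ followed by the dichotomy $\cM^n=t^n\cO$ versus $\cM^n=\cM$ is exactly the standard route, and the identification $(\cM:\cM)=\cO(\cM)=\cO$ is justified by Proposition~\ref{Ovr}.

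One small slip to fix: in part~2 you write ``pick $b\in\cO$ with $wb<0$''. Since $w$ is the valuation attached to $\cO$, any $b\in\cO$ has $wb\ge 0$, so no such $b$ exists. What you need is $b\in\cO\setminus\cO_L$, hence $vb<0$ for the finer valuation $v$ of $\cO_L$; then $b^{-1}\in\cM_L$ and $b\cdot b^{-1}=1\notin\cM_L$, giving $b\cM_L\not\subseteq\cM_L$ as you intended. (Later in the same paragraph your use of $w$ is harmless because by then you have already forced $\cO=\cO_L$, so $w=v$.) With this correction the proof goes through.
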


Since $I_\cE$ is a principal $\cO_L$-ideal, we can choose $a\in \cO_L$ such
that $I_\cE=a\cO_L$ to obtain that
\[
I_\cE\cM_\cE \>=\> a\cM_\cE\>.
\]
Now we apply Proposition~\ref{ann}.
\begin{proposition}
Let $\cE$ be an Artin-Schreier extension or a Kummer extension of degree $p=\chara Kv$.
Assume that $[L:K]=(vL:vK)$ and that $\Omega_{\cO_L|\cO_K}\ne 0$. Then
\[
\ann \Omega_{\cO_L|\cO_K} \>=\> \left\{
\begin{array}{ll}
(a\cM_\cE)^{p-1} & \mbox{if $\cM_\cE$ is a principal $\cO_\cE$-ideal, } \\
(a\cO_\cE)^{p-1} = a^{p-1}\cO_\cE & \mbox{if $\cM_\cE$ is a nonprincipal
$\cO_\cE$-ideal.}
\end{array}\right.
\]
Further, $\ann \Omega_{\cO_L|\cO_K}=\cM_L$ if and only if $p=2$, $a\notin\cM_L=
\cM_\cE$ and $\cM_L$ is a principal $\cO_L$-ideal.
\end{proposition}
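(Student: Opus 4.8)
The plan is to reduce the entire statement to a direct application of Proposition~\ref{ann}. First I would observe that the K\"ahler differentials have already been computed in both cases covered here: for an Artin-Schreier extension of degree $p=(vL:vK)$ this is Theorem~\ref{OASe}, and for a Kummer extension of degree $p=\chara Kv$ this is the case $q=\chara Kv$ of Theorem~\ref{OKume}. In either situation we obtain the common $\cO_L$-module isomorphism
\[
\Omega_{\cO_L|\cO_K}\cong I_\cE\cM_\cE/(I_\cE\cM_\cE)^p\>.
\]
I would stress that this identification is uniform across both the Artin-Schreier and the Kummer case, and in the Kummer case across the subcases 2)a) and 2)b) of Theorem~\ref{ThmASKgen}, so no further case distinction is needed at this point.

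Next I would use the element $a\in\cO_L$ with $I_\cE=a\cO_L$ fixed in the remark preceding the statement, which is available because $I_\cE$ is a principal $\cO_L$-ideal. With this choice, $I_\cE\cM_\cE=a\cM_\cE$, and therefore $\Omega_{\cO_L|\cO_K}\cong a\cM_\cE/(a\cM_\cE)^p$. Since the annihilator of an $\cO_L$-module depends only on its isomorphism class, it suffices to compute $\ann a\cM_\cE/(a\cM_\cE)^p$. Here $\cO_\cE$ is a coarsening of $\cO_L$, hence a valuation ring containing $\cO_L$, with maximal ideal $\cM_\cE$, and $n:=p\geq 2$ because $p$ is prime; thus the hypotheses on the ambient data in Proposition~\ref{ann} are met with $\cO=\cO_\cE$ and $\cM=\cM_\cE$.

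The only condition that still has to be verified is the nondegeneracy hypothesis $(a\cM_\cE)^p\neq a\cM_\cE$ of Proposition~\ref{ann}. This is precisely the assertion that the quotient $a\cM_\cE/(a\cM_\cE)^p$, and hence $\Omega_{\cO_L|\cO_K}$, is nonzero, which we have assumed. Once this is in place, part 1) of Proposition~\ref{ann} delivers the displayed two-case formula according to whether $\cM_\cE$ is a principal or a nonprincipal $\cO_\cE$-ideal, and part 2) gives the final characterization of when the annihilator equals $\cM_L$, namely $p=2$, $a\notin\cM_L=\cM_\cE$, and $\cM_L$ a principal $\cO_L$-ideal. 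I do not expect a genuine obstacle: all the substantive work resides in Theorems~\ref{OASe} and~\ref{OKume} and in Proposition~\ref{ann}, and the single point requiring care is the correct translation of the nonvanishing hypothesis $\Omega_{\cO_L|\cO_K}\neq 0$ into the condition $(a\cM_\cE)^p\neq a\cM_\cE$.
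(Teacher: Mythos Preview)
Your proposal is correct and follows exactly the paper's approach: the paper writes $I_\cE=a\cO_L$ so that $I_\cE\cM_\cE=a\cM_\cE$, and then simply says ``Now we apply Proposition~\ref{ann}.'' You have merely made the hypothesis checks (in particular the translation of $\Omega_{\cO_L|\cO_K}\ne 0$ into $(a\cM_\cE)^p\neq a\cM_\cE$) explicit where the paper leaves them implicit.
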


Let us note that if (DRvp) holds (and in particular, if $(K,v)$ is a deeply ramified
field), then the maximal ideal of any coarsening of $\cO_L$ is never principal. In this
case, $\cM_L$ is never the annihilator of $\Omega_{\cO_L|\cO_K}$.

\pars
In the case of a Kummer extension of prime degree $q=(vL:vK)\ne \chara Kv$,
(\ref{OKumec1}) holds, and we set $a=1$. Then we obtain from Theorem~\ref{OKume}
and Proposition~\ref{ann}:
\begin{proposition}
Let $\cE$ be a Kummer extension of degree $q=(vL:vK)\ne\chara Kv$.
Assume that $\Omega_{\cO_L|\cO_K}$ is nonzero. Then $\cM_\cE$ is a principal
$\cO_\cE$-ideal, and
\[
\ann \Omega_{\cO_L|\cO_K} \>=\> \cM_\cE^{q-1}\>.
\]
Further, $\ann \Omega_{\cO_L|\cO_K}=\cM_L$ if and only if $q=2$ and $\cM_\cE=\cM_L$.
\end{proposition}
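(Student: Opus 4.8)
The plan is to apply Proposition~\ref{ann} to the isomorphism (\ref{OKumec1}) of Theorem~\ref{OKume}. Since we are in the case $q=(vL:vK)\neq\chara Kv$, that theorem gives $\Omega_{\cO_L|\cO_K}\cong\cM_\cE/\cM_\cE^q$ as $\cO_L$-modules. To put this in the shape required by Proposition~\ref{ann}, which computes $\ann\, a\cM/(a\cM)^n$ for a valuation ring $\cO\supseteq\cO_L$ with maximal ideal $\cM$, I would set $a=1$, $\cM=\cM_\cE$, $\cO=\cO_\cE$ and $n=q$, so that $a\cM=\cM_\cE$ and $(a\cM)^n=\cM_\cE^q$. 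The hypothesis $n\geq 2$ holds because $q$ is prime. The hypothesis $(a\cM)^n\neq a\cM$, i.e.\ $\cM_\cE^q\neq\cM_\cE$, must hold here because otherwise $\cM_\cE/\cM_\cE^q=0$, contradicting our standing assumption that $\Omega_{\cO_L|\cO_K}$ is nonzero.

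The first substantive step is to show that $\cM_\cE$ is a principal $\cO_\cE$-ideal. I expect this to be the main point requiring justification, since the conclusion of the proposition asserts principality rather than merely assuming it. I would argue by contradiction using the last clause of the nonvanishing criterion in Theorem~\ref{OKume}: for $q\neq\chara Kv$ the condition $q\notin\cM_\cE$ holds automatically, so the criterion ``$\Omega_{\cO_L|\cO_K}=0$ if and only if $q\notin\cM_\cE$ and $\cM_\cE$ is a nonprincipal $\cO_\cE$-ideal'' reduces to the statement that $\Omega_{\cO_L|\cO_K}=0$ precisely when $\cM_\cE$ is nonprincipal. Since we are assuming $\Omega_{\cO_L|\cO_K}\neq 0$, the ideal $\cM_\cE$ cannot be nonprincipal, hence it is principal.

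With principality in hand, Proposition~\ref{ann} 1) applied with $a=1$ yields $\ann\,\cM_\cE/\cM_\cE^q=\cM_\cE^{q-1}$, which is the asserted formula $\ann\,\Omega_{\cO_L|\cO_K}=\cM_\cE^{q-1}$; here I use that the module isomorphism (\ref{OKumec1}) transports annihilators, as annihilators are preserved under $\cO_L$-module isomorphism. For the final equivalence I invoke Proposition~\ref{ann} 2), again with $a=1$ and $\cM=\cM_\cE$. That clause states that the annihilator equals $\cM_L$ if and only if $n=2$, $a\notin\cM_L=\cM$, and $\cM_L$ is a principal $\cO_L$-ideal. With $a=1$, the condition $a\notin\cM_L$ is automatic (a unit is never in $\cM_L$), $n=2$ translates to $q=2$, and the remaining two conditions $\cM_L=\cM_\cE$ and $\cM_L$ principal are needed.

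The only delicate point is reconciling the two forms of this last condition. Proposition~\ref{ann} 2) requires both $\cM_L=\cM_\cE$ and that $\cM_L$ be principal, whereas the statement to be proved lists only ``$q=2$ and $\cM_\cE=\cM_L$''. The reconciliation is that once $\cM_\cE=\cM_L$, the principality of $\cM_\cE$ already established above is exactly the principality of $\cM_L$ as an $\cO_L$-ideal, so the principality clause is automatically satisfied and need not be restated. Thus $q=2$ together with $\cM_\cE=\cM_L$ is equivalent to the full list of conditions in Proposition~\ref{ann} 2), completing the argument. I anticipate no computational obstacle; the work is entirely in correctly matching the parameters $a$, $\cM$, $\cO$, $n$ and in reading off the principality of $\cM_\cE$ from the nonvanishing of $\Omega_{\cO_L|\cO_K}$.
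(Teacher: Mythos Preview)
Your proposal is correct and follows exactly the route the paper takes: set $a=1$, invoke the isomorphism $\Omega_{\cO_L|\cO_K}\cong\cM_\cE/\cM_\cE^q$ from Theorem~\ref{OKume}, and read everything off from Proposition~\ref{ann}. Your explicit justification that $\cM_\cE$ is principal (via the vanishing criterion in Theorem~\ref{OKume} together with $q\notin\cM_\cE$ for $q\ne\chara Kv$) and your observation that $\cM_\cE=\cM_L$ forces $\cO_\cE=\cO_L$, so that the already established principality of $\cM_\cE$ supplies the ``$\cM_L$ principal'' clause of Proposition~\ref{ann}~2), are precisely the details the paper leaves implicit.
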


\mn
%
%
\section{Deeply ramified fields in equal characteristic with prescribed associated
convex subgroups}               \label{sectpacse}
%

%
%
\subsection{Preliminaries from ramification theory}                               
An algebraic extension $(L|K,v)$ of a henselian valued field
$(K,v)$ is called \bfind{tame} if every finite subextension $K'|K$ satisfies the
following conditions:
\sn
(T1) \ the ramification index $(vK':vK)$ is not divisible by $\chara Kv$,
\sn
(T2) \ the residue field extension $K'v|Kv$ is separable,
\sn
(T3) \ the extension $(K'|K,v)$ is defectless.
\sn
A henselian valued field $(K,v)$ is called a \bfind{tame field} if the algebraic
closure $K^{\rm ac}$ of $K$ with the unique extension of $v$ is a tame extension of
$(K,v)$. It follows from conditions (T1)--(T3) that all tame fields are perfect
defectless fields. For the algebra and model theory of tame fields, see \cite{Ku39}.

\pars
The \bfind{ramification field} of a Galois extension $(L|K,v)$ with Galois group
$G=\Gal(L|K)$ is the fixed field in $L$ of the \bfind{ramification group}
\begin{equation}
G^r\>:=\> \left\{\sigma\in G\>\left|\;\> \frac{\sigma b -b}{b}\in \cM_L
\mbox{ \ for all }b\in L^\times\right.\right\}\>.
\end{equation}
When dealing with a valued field $(K,v)$, we will tacitly assume $v$ extended to its
algebraic closure. Then the \bfind{absolute ramification field of $(K,v)$} (with
respect to the chosen extension of $v$), denoted by $(K^r,v)$, is the
ramification field of the Galois extension $(K\sep|K,v)$.
If $(K(a)|K,v)$ is finite and a defect extension, then $(K^r(a)|K^r,v)$ is a
defect extension with the same defect (see \cite[Proposition~2.13]{KR}). On the
other hand, $K\sep|K^r$ is a $p$-extension (see \cite[Theorem (20.18)]{End}), so
every finite extension of $K^r$ is a tower of purely inseparable extensions and
Galois extensions of degree $p$. If $(K,v)$ is henselian, then $(K^r,v)$ is its
unique maximal tame extension (see \cite[Proposition 4.1]{Ku2}). Hence the next fact
follows from \cite[Proposition~2.13]{KR}:
\begin{lemma}                               \label{defthrutame}
If $(K,v)$ is henselian, $(K(a)|K,v)$ is finite and a defect extension, and $(L|K,v)$
is a tame extension, then $d(L(a)|L,v)=d(K(a)|K,v)$.
\end{lemma}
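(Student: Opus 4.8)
The plan is to route everything through the absolute ramification field $K^r$, exploiting that it is the \emph{unique} maximal tame extension of the henselian field $(K,v)$ together with the cited fact \cite[Proposition~2.13]{KR}, which says that passing to the ramification field preserves the defect of a finite extension.

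First I would locate $L$ inside $K^r$. Since $(L|K,v)$ is tame and $(K^r,v)$ is the unique maximal tame extension of $(K,v)$ (by \cite[Proposition 4.1]{Ku2}, as recalled above), we have $K\subseteq L\subseteq K^r$; note that $L$ is itself henselian, being algebraic over the henselian field $K$. Next I would identify the maximal tame extension $L^r$ of $L$ with $K^r$. On the one hand $K^r|L$ is tame, being the upper part of the tame tower $K^r|K$, so $K^r\subseteq L^r$. On the other hand $L^r|L$ and $L|K$ are both tame, so by transitivity of tameness $L^r|K$ is tame, whence $L^r\subseteq K^r$ by maximality. Thus $L^r=K^r$. (The relevant closure properties of tame extensions are standard, see \cite{Ku39}.)

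With $L^r=K^r$ in hand, the lemma follows by applying \cite[Proposition~2.13]{KR} twice. Applied to $K$ it gives $d(K^r(a)|K^r,v)=d(K(a)|K,v)$; in particular, since $K(a)|K$ is a defect extension, so is $K^r(a)|K^r$, and hence $a\notin K^r=L^r\supseteq L$, so that $L(a)|L$ is a nontrivial finite extension. Applied to $L$, whose maximal tame extension is $L^r=K^r$, it gives $d(K^r(a)|K^r,v)=d(L^r(a)|L^r,v)=d(L(a)|L,v)$. Chaining the two equalities yields $d(L(a)|L,v)=d(K(a)|K,v)$, as desired.

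The step I expect to be the main obstacle is the second application of \cite[Proposition~2.13]{KR}, whose hypothesis requires the base extension to be a \emph{defect} extension: to invoke it for $L$ I must first know $d(L(a)|L,v)>1$. This I would verify directly by multiplicativity of the defect along finite towers. For every finite tame subextension $L_0|K$ of $L|K$ one has $d(L_0|K,v)=1$ and, since tameness is preserved under base change, $d(L_0(a)|K(a),v)=1$; comparing the two factorizations of $d(L_0(a)|K,v)$ through $L_0$ and through $K(a)$ then gives $d(L_0(a)|L_0,v)=d(K(a)|K,v)>1$. Because $[K(a):K]$ is finite, the degree, ramification index and residue degree of $a$ over $L_0$ stabilize as $L_0$ exhausts $L$, so $d(L(a)|L,v)=d(L_0(a)|L_0,v)$ for $L_0$ large enough; this both secures the defect hypothesis and, in fact, re-proves the lemma via finite towers alone. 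The delicate point throughout is that $L|K$ and $K^r|K$ may be infinite, so every defect computation must be reduced to finite subextensions, where the Lemma of Ostrowski~(\ref{feuniq}) and the multiplicativity of the defect apply.
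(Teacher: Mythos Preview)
Your approach is essentially the same as the paper's: the paper simply says the lemma ``follows from \cite[Proposition~2.13]{KR}'' after noting that $K^r$ is the unique maximal tame extension, which is exactly your route through $L\subseteq K^r$ and $L^r=K^r$. You are more careful than the paper in flagging and resolving the hypothesis that $L(a)|L$ be a defect extension before the second application of \cite[Proposition~2.13]{KR}; your finite-tower argument via multiplicativity of defect and tameness of $L_0(a)|K(a)$ is a correct (and self-contained) way to handle this, and indeed gives an independent proof of the lemma that bypasses $K^r$ entirely.
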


An extension $(L,v)$ of a henselian field $(K,v)$ is called \bfind{purely wild} if
every finite subextension $(L_0|K,v)$ satisfies:
\sn
(a) $(vL_0:vK)$ is a power of the characteristic exponent of $Kv$,
\sn
(b) $L_0v|Kv$ is purely inseparable.
\sn
The extension $(L,v)$ of $(K,v)$ is purely wild if and only if it is linearly disjoint
from $K^r$ over $K$ (see \cite[Lemma 4.2]{Ku2}).
\begin{lemma}                              \label{mpwext}
Every maximal purely wild extension of a henselian field is a tame field.
\end{lemma}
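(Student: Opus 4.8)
The plan is to reduce the statement to the single assertion that a maximal purely wild extension admits no proper purely wild extension of itself, and then to invoke the characterization of tame fields. Write $(W,v)$ for a maximal purely wild extension of the henselian field $(K,v)$; as an algebraic extension of a henselian field, $(W,v)$ is itself henselian. If $\chara Kv=0$ there is nothing to do, since then $\tilde p=1$ forces every finite purely wild subextension $(L_0|K,v)$ to satisfy $(vL_0:vK)=1$ and $[L_0v:Kv]=1$ (a purely inseparable residue extension in residue characteristic $0$ is trivial), whence $L_0=K$; thus $W=K$, and $K$ is tame because $K^{\rm ac}|K$ is tame. So I may assume $\chara Kv=p>0$.

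The key step is \emph{transitivity} of purely wild extensions: if $W'|W$ is a finite purely wild extension, then $W'|K$ is purely wild. I would prove this through the characterization (\cite[Lemma 4.2]{Ku2}) that an extension is purely wild precisely when it is linearly disjoint from the absolute ramification field over the base. By that lemma $W$ is linearly disjoint from $K^r$ over $K$, and $W'$ is linearly disjoint from $W^r$ over $W$. First one shows $K^r\subseteq W^r$: since $K^r|K$ is tame, its translate $WK^r|W$ is again tame (tameness is preserved under this base change — the ramification index stays prime to $p$, the residue extension stays separable, and the extension stays defectless), and $W^r$ is by definition the maximal tame extension of $W$, so $WK^r\subseteq W^r$. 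Consequently $W'$, being linearly disjoint from $W^r$ over $W$, is linearly disjoint from the intermediate field $WK^r$ over $W$, whence $[W'K^r:WK^r]=[W':W]$. Combining this with $[WK^r:K^r]=[W:K]$ gives $[W'K^r:K^r]=[W':K]$, i.e.\ $W'$ is linearly disjoint from $K^r$ over $K$; by \cite[Lemma 4.2]{Ku2} again, $W'|K$ is purely wild.

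Granting transitivity, the maximality of $W$ forces $W$ to have no proper purely wild extension: any such $W'|W$ would yield a purely wild $W'|K$ strictly containing $W$, contradicting maximality. It then remains to check that a henselian field with no proper purely wild algebraic extension is a tame field. I would argue by contraposition using the characterization of tame fields (\cite{Ku39}): if $(W,v)$ is not tame then, since $\chara Wv=p>0$, at least one of the conditions ``$vW$ is $p$-divisible'', ``$Wv$ is perfect'', ``$(W,v)$ is defectless'' fails, and each failure produces a proper purely wild extension of $W$. If $vW$ is not $p$-divisible, pick $a\in W^\times$ with $va/p\notin vW$; then $W(a^{1/p})|W$ has value-group index $p$ (a power of $\tilde p$) and trivial residue extension, hence is purely wild. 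If $Wv$ is not perfect, a $p$-th root of a residue element lacking one yields a degree-$p$ extension with trivial value-group extension and purely inseparable residue extension, again purely wild. If $(W,v)$ admits a defect extension of degree $p$, then by the Lemma of Ostrowski that extension has trivial value-group and residue extensions, so it too is purely wild. In every case $W$ has a proper purely wild extension.

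I expect the main obstacle to be the transitivity step, and within it the fact that tameness is preserved under the base change from $K$ to $W$ (equivalently $K^r\subseteq W^r$); the remaining computations are the routine multiplicativity of degrees for linearly disjoint composita, and the constructions of purely wild witnesses from a failure of $p$-divisibility, perfectness, or defectlessness are standard (the residue witness in mixed characteristic being the only point that needs the usual extra care). Once transitivity and the witness constructions are in place, the conclusion that $W$ is a tame field is immediate.
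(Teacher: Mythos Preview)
Your overall architecture — prove transitivity of purely wild extensions, conclude that $W$ has no proper purely wild extension, then deduce tameness — is sound and is in fact the skeleton of the paper's argument as well. Two points deserve attention.

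First, a minor one: in the transitivity step you manipulate degrees such as $[WK^r:K^r]=[W:K]$, but $W|K$ need not be finite. Linear disjointness of $W'$ and $K^r$ over $K$ should instead be obtained from the tower property of linear disjointness (you have $W$ linearly disjoint from $K^r$ over $K$, and $W'$ linearly disjoint from $WK^r$ over $W$ since $WK^r\subseteq W^r$), which is valid without finiteness assumptions.

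Second, and more substantively: in your step~4, the case ``$(W,v)$ not defectless'' is handled by asserting that $W$ admits a defect extension \emph{of degree $p$}. You do not justify this, and it is not immediate. What you actually need is merely that $W$ admits \emph{some} proper purely wild extension whenever $W$ is not tame, and the clean way to get this is exactly what the paper does: invoke \cite[Lemma~2.1(i)]{Ku2} to produce a $W$-complement $W'$ of $W^r$, observe that $W'|W$ is purely wild (since $W^r|W$ is Galois and $W'\cap W^r=W$), and note $W'\ne W$ whenever $W^r\ne W^{\rm ac}$. The paper then combines this with \cite[Theorem~4.3]{Ku2} (maximal purely wild extensions are complements of the absolute ramification field) and your transitivity to force $W'=W$, whence $W^r=W'.W^r=W^{\rm ac}$ and $W$ is tame. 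This replaces your three-case witness construction entirely and sidesteps the unproven existence of a degree-$p$ defect extension.
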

\begin{proof}
By \cite[Theorem 4.3]{Ku2}, every maximal purely wild extension $W$ of a henselian
field $(K,v)$ is a $K$-complement of $K^r$, that is, $W\cap K^r=K$ and $W.K^r=
K^{\rm ac}$. By \cite[Lemma 2.1 (i)]{Ku2}, there is also a $W$-complement $W'$ of
$W^r$. Again by \cite[Theorem 4.3]{Ku2}, $W'$ is a maximal purely wild extension of
$W$. By the maximality of $W$, we must have $W'=W$. Hence $K^{\rm ac}=W'.W^r=W^r$,
which shows that $W$ is a tame field.
\end{proof}

%
%
\subsection{Technical preliminaries}                               
For the following result, see~\cite[Lemma 4.1]{[Bl3]}
(cf.\ also~\cite[Lemma 2.21]{Ku30}):
\begin{lemma}                                   \label{imm_deg_p}
Assume that $(K(a)|K,v)$ is a unibranched extension of prime degree such
that $v(a-K)$ has no maximal element. Then the
extension $(K(a)|K,v)$ is immediate and hence a defect extension.
\end{lemma}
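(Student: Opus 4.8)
The plan is to combine the Lemma of Ostrowski~(\ref{feuniq}) with the fact that a finite defectless extension always admits a best approximation to elements outside the base field. Since $(K(a)|K,v)$ is finite and unibranched of prime degree $p$, equation~(\ref{feuniq}) reads
\[
p \>=\> \tilde p^{\,\nu}\cdot(vK(a):vK)\,[K(a)v:Kv]\>,
\]
so, $p$ being prime, the product $ef:=(vK(a):vK)[K(a)v:Kv]$ is either $1$ or $p$. If $ef=1$, then $(K(a)|K,v)$ is immediate; moreover $p=\tilde p^{\,\nu}$ then forces $\tilde p=p$ and $\nu=1$, whence the defect equals $\tilde p^{\,\nu}=p\ne 1$ and the extension is a defect extension, as claimed. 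Thus it remains only to exclude the case $ef=p$, in which $\nu=0$ and the extension is defectless.

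To rule this case out I would show that a finite \emph{defectless} unibranched extension $(K(a)|K,v)$ with $a\notin K$ always has a maximal element in $v(a-K)$, directly contradicting the hypothesis. A useful preliminary observation is that the relevant values already lie in $vK$: if $(c_\rho)$ is a sequence in $K$ with $v(a-c_\rho)$ strictly increasing and cofinal in $v(a-K)$, then $v(c_\sigma-c_\rho)=v(a-c_\rho)$ for $\rho<\sigma$, so each $v(a-c_\rho)\in vK$.

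The key step is to exploit a valuation basis of the defectless extension. Recall the standard characterization that a finite defectless extension of degree $p$ possesses a basis $1=\omega_0,\omega_1,\dots,\omega_{p-1}$ of $K(a)$ over $K$ with the property that $v\big(\sum_i c_i\omega_i\big)=\min_i\big(vc_i+v\omega_i\big)$ for all $c_i\in K$. Writing $a=\sum_{i=0}^{p-1}c_i\omega_i$, the assumption $a\notin K$ gives $c_i\ne 0$ for some $i\ge 1$, so
\[
\beta\>:=\>\min_{\,1\le i\le p-1,\ c_i\ne 0}\big(vc_i+v\omega_i\big)
\]
is a well-defined element of $vK(a)$. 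For every $c\in K$ the basis property yields $v(a-c)=\min\big(v(c_0-c),\,\beta\big)\le\beta$, with equality at $c=c_0$; hence $\beta=\max v(a-K)$. This contradiction shows that $ef=p$ is impossible, so the extension is immediate and a defect extension.

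I expect the main obstacle to be the justification of the valuation basis and its defining identity, that is, the fact that defectlessness is precisely what allows the value of $\sum_i c_i\omega_i$ to be read off termwise. If one prefers to avoid this characterization, the same conclusion can be obtained by treating the two concrete possibilities separately: for $e=p$ (totally ramified) one expands $a$ in a uniformizer basis and uses that the cosets $i\,v\pi+vK$ are pairwise distinct, while for $f=p$ (residual) one expands $a$ in a basis lifting a residue generator $\eta$ and uses the $Kv$-linear independence of $1,\overline{\eta},\dots,\overline{\eta}^{\,p-1}$; in both cases $v(a-c)$ is pinned down and shown to attain its supremum.
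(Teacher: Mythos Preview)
Your argument is correct. The dichotomy from the Lemma of Ostrowski reduces the task to showing that a defectless unibranched extension of prime degree admits a best approximation of $a$ in $K$, and your valuation-basis argument does this cleanly: a basis $1=\omega_0,\omega_1,\dots,\omega_{p-1}$ with the min-property exists in both the $e=p$ and the $f=p$ cases (as you outline in your alternative), and then $v(a-c_0)=\beta$ is indeed the maximum of $v(a-K)$.

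As for comparison: the paper does not actually give a proof of this lemma but simply cites \cite[Lemma~4.1]{[Bl3]} and \cite[Lemma~2.21]{Ku30}. Your write-up therefore supplies a self-contained argument where the paper defers to the literature. The valuation-basis route you take is essentially the standard one underlying those references; the only point one might tighten is to make explicit that a valuation basis containing $1$ always exists in the defectless prime-degree case---but you already sketch exactly this in your final paragraph via the two concrete cases, which suffices.
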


\begin{lemma}                 \label{l1}
1) Let $(K_0,v)$ be a valued field of characteristic $p>0$ whose value group
is not $p$-divisible. Take $a\in K_0$ such that $va<0$ is not divisible by
$p$. Let $\vartheta$ be a root of the Artin--Schreier polynomial $X^p-X-a$. Then
$(K_0^{1/p^{\infty}}(\vartheta)|K_0^{1/p^{\infty}},v)$ is a defect extension with
independent defect, and $v(\vartheta-K_0^{1/p^{\infty}})\subseteq
(vK_0^{1/p^{\infty}})^{<0}$.
\sn
2) Take a perfect field $k$ of characteristic $p>0$ and $K_0$ to be $k(t)$, $k(t)^h$
or $k((t))$, equipped with the $t$-adic valuation $v=v_t\,$. Let $\vartheta$ be
a root of the Artin--Schreier polynomial $X^p-X-1/t$. Then the assertion of part 1)
holds, and
\[
v(\vartheta-K_0^{1/p^{\infty}})\>=\>(v K_0^{1/p^{\infty}})^{<0}\;.
\]
\end{lemma}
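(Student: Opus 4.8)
The plan is to prove both parts by first establishing the structural properties of the Artin--Schreier extension and then pinning down the set $v(\vartheta-K_0^{1/p^\infty})$. For part 1), I would begin by observing that $K_0^{1/p^\infty}$ is a perfect field, so its value group $vK_0^{1/p^\infty}$ is $p$-divisible, and crucially $va<0$ remains \emph{not divisible by $p$} only as an element of $vK_0$; once we pass to the perfect hull it becomes divisible. The key point to verify is that $\vartheta\notin K_0^{1/p^\infty}$, equivalently that $a\notin\wp(K_0^{1/p^\infty})$ where $\wp(X)=X^p-X$. I would show this by a standard valuation-theoretic argument: if $a=c^p-c$ for some $c\in K_0^{1/p^\infty}$ with $vc<0$, then $va=pvc$, forcing $va\in pvK_0^{1/p^\infty}$; I would then trace this back to derive a contradiction with the non-$p$-divisibility hypothesis on $vK_0$ at the relevant level, using that $va<0$ is not a $p$-th multiple in $vK_0$. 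This gives $[K_0^{1/p^\infty}(\vartheta):K_0^{1/p^\infty}]=p$.

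Next I would establish that $v(\vartheta-K_0^{1/p^\infty})$ has no maximal element and lies in $(vK_0^{1/p^\infty})^{<0}$. The strategy is to use the $p$-divisibility of the perfect value group: given any approximation $c$ with $v(\vartheta-c)=\gamma<0$, I would produce a better one by solving an Artin--Schreier-type correction over the perfect field. Concretely, writing $\vartheta-c=:\delta$, one has $\wp(\delta)=\wp(\vartheta)-\wp(c)-(\text{cross terms})$, and because $vK_0^{1/p^\infty}$ is $p$-divisible one can extract $p$-th roots to cancel the leading term of $\delta$, strictly increasing the value. This shows no maximal element exists, so by Lemma~\ref{imm_deg_p} the extension $(K_0^{1/p^\infty}(\vartheta)|K_0^{1/p^\infty},v)$ is immediate and hence a defect extension. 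Independent defect should then follow from Lemma~\ref{l1}'s ambient setting together with Lemma~\ref{mpwext}: since $K_0^{1/p^\infty}$ sits inside a tame field after henselization and the value group is $p$-divisible with no discrete archimedean component at the bottom, the criterion for independent defect via $v(\vartheta^p-\vartheta-\wp(K))$ matching a principal cut complement (cf.\ Equation~(\ref{eq1})) holds automatically because every relevant positive cut is non-attained in a $p$-divisible dense group.

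For part 2), with $K_0=k(t)$, $k(t)^h$, or $k((t))$ and $a=1/t$, the verification of the hypotheses of part 1) is immediate since $v(1/t)=-v_t(t)=-1$ is not divisible by $p$ and $vK_0=\Z$. The extra claim is the \emph{exact} equality $v(\vartheta-K_0^{1/p^\infty})=(vK_0^{1/p^\infty})^{<0}$, i.e.\ that the approximation values exhaust all negative elements of the $p$-divisible group $vK_0^{1/p^\infty}=\Z[1/p]$. The plan is to construct explicit approximants: for each negative $\gamma\in\Z[1/p]$ I would exhibit $c\in K_0^{1/p^\infty}$ with $v(\vartheta-c)\geq\gamma$ by truncating the formal Artin--Schreier expansion $\vartheta=\sum_{i\geq 0}(1/t)^{1/p^i}+\cdots$ obtained by iterating the substitution $\vartheta=\vartheta^p-1/t$, each iteration introducing a $p$-th root of $t$ and thereby reaching every value in $\Z[1/p]^{<0}$. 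I expect the main obstacle to be the careful bookkeeping in this last step: one must confirm that the partial sums genuinely lie in $K_0^{1/p^\infty}$ (not merely in the completion) and that their differences from $\vartheta$ have values cofinal in \emph{all} of $(vK_0^{1/p^\infty})^{<0}$ rather than only a proper final segment, which is exactly the distinction between the inclusion already given in part 1) and the sharp equality asserted here.
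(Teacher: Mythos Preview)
Your argument for part 1) has a genuine gap at the very first step. You want to show $\vartheta\notin K_0^{1/p^\infty}$ by arguing that if $a=c^p-c$ with $c\in K_0^{1/p^\infty}$ and $vc<0$, then $va=pvc$ forces a contradiction with non-$p$-divisibility of $va$ in $vK_0$. But there is no contradiction: $c$ lives in $K_0^{1/p^\infty}$, whose value group is the $p$-divisible hull of $vK_0$, so $vc=va/p$ is perfectly consistent with $c\in K_0^{1/p^n}$ for large $n$. The value-group obstruction disappears over the perfect hull, and nothing you wrote recovers it. The paper's route is different and clean: one first checks $[K_0(\vartheta):K_0]=p=(vK_0(\vartheta):vK_0)$ from $v\vartheta=va/p\notin vK_0$, and then uses that the separable extension $K_0(\vartheta)|K_0$ is linearly disjoint from the purely inseparable extension $K_0^{1/p^\infty}|K_0$, giving $[K_0^{1/p^\infty}(\vartheta):K_0^{1/p^\infty}]=p$ immediately.

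There are two further issues in part 1). First, you never verify that the extension is unibranched, which is needed before one can speak of its defect; the paper gets this from the Fundamental Inequality applied to $K_0(\vartheta)|K_0$ and uniqueness of extensions along the purely inseparable tower. Second, your justification of independent defect via Lemma~\ref{mpwext} is misplaced: that lemma concerns maximal purely wild extensions and says nothing here. The correct argument is simply that $K_0^{1/p^\infty}$ is perfect, hence deeply ramified, and then \cite[Theorem~1.10(1)]{KR} forces every Galois defect extension of degree $p$ to have independent defect. Your approach to immediacy (building successive approximations) could be made to work but is more laborious than the paper's, which instead computes $vK_0^{1/p^\infty}(\vartheta)$ and $K_0^{1/p^\infty}(\vartheta)v$ by passing through $K_0(\vartheta)$ and its perfect hull.

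For part 2), your plan with explicit truncated Artin--Schreier sums is exactly what the paper does (with $b_k=\sum_{i=1}^k t^{-1/p^i}$), including the contradiction argument showing no $c\in K_0^{1/p^\infty}$ beats all the $b_k$.
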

\begin{proof}
1): We have that
$v\vartheta=va/p$ and $[K_0(\vartheta):K_0]=p= (vK_0(\vartheta):vK_0)$. The Fundamental
Inequality (cf.\ (17.5) of \cite{End} or Theorem 19 on p.~55 of \cite{ZS} shows
that $K_0(\vartheta)v=K_0v$ and that the extension $(K_0(\vartheta)|
K_0,v)$ is unibranched. The further extension of $v$ to the perfect hull
\[
K_0(\vartheta)^{1/p^{\infty}}= K_0^{1/p^{\infty}}(\vartheta)
\]
is unique, as the extension is purely inseparable. It follows that
also the extension $(K_0^{1/p^{\infty}}(\vartheta)|K_0^{1/p^{\infty}},v)$ is
unibranched. On the other hand, $[K_0^{1/p^{\infty}} (\vartheta):K_0^{1/p^{\infty}}]
=p$ since the separable extension $K_0(\vartheta)|K_0$ is linearly disjoint from
$K_0^{1/p^{\infty}}|K_0$. The value group $vK_0^{1/p^{\infty}}(\vartheta)=
vK_0(\vartheta)^{1/p^{\infty}}$ is the $p$-divisible hull of $vK_0(\vartheta)=
vK_0+\Z v\vartheta$. Since $pv\vartheta\in vK$, this
is the same as the $p$-divisible hull of $vK_0$, which in turn is equal to
$vK_0^{1/p^{\infty}}$. The residue field of $K_0^{1/p^{\infty}}(\vartheta)$ is the
perfect hull of $K_0(\vartheta)v=K_0v$. Hence it is equal to the residue field of
$K_0^{1/p^{\infty}}$. It follows that the extension $(K_0^{1/p^{\infty}}(\vartheta)|
K_0^{1/p^{\infty}},v)$ is immediate and that its defect is $p$, equal to its degree.
Since $K_0^{1/p^{\infty}}$ is perfect, it is deeply ramified and hence according to
\cite[part (1) of Theorem~1.10]{KR} the extension must
have independent defect. The inclusion $v(\vartheta-K_0^{1/p^{\infty}})\subseteq
(vK)^{<0}$ follows from \cite[Corollary~2.30]{Ku30}.

\mn
2): In all three cases we have that $K_0^{1/p^{\infty}}=K_0(t^{1/p^k}\mid k\in\N)$.
For the partial sums
\begin{equation}
b_k\>:=\>\sum_{i=1}^k t^{-1/p^i}\;\in\> K_0^{1/p^{\infty}}
\end{equation}
we have
\[
(\vartheta-b_k)^p-(\vartheta-b_k)\>=\> \vartheta^p-\vartheta-b_k^p+b_k \>=\>
\frac 1 t -\sum_{i=0}^{k-1} t^{-1/p^i} + \sum_{i=1}^k t^{-1/p^i} \>=\> t^{-1/p^k}\>,
\]
so
\[
v (\vartheta-b_k) \>=\> -\frac 1 {p^{k+1}}\><\>0 \>.
\]
Suppose that there is $c\in K_0^{1/p^{\infty}}$ such that $v(\vartheta-c)>-1/p^k$ for
all $k$. Then $v(c-b_k)=\min\{v(\vartheta-c),v(\vartheta-b_k)\}=-1/p^{k+1}$ for all
$k$. On the other hand, there is some $k$ such that $c\in K_0(t^{-1/p},\ldots,
t^{-1/p^k})=K_0(t^{-1/p^k})$. But this contradicts the fact that $v(c-t^{-1/p}-\ldots
-t^{-1/p^k})=v(c-b_k)= -1/p^{k+1}\notin vK_0(t^{-1/p^k})$. As $v(\vartheta
-K_0^{1/p^{\infty}})\subseteq (vK_0^{1/p^{\infty}})^{<0}$ by part 1), this proves
that the values $-1/p^k$ are cofinal in $v(\vartheta-K_0^{1/p^{\infty}})$. Since
$vK_0^{1/p^{\infty}}$ is a subgroup of the rationals, this shows that the least
upper bound of $v(\vartheta-K_0^{1/p^{\infty}})$ in $vK_0^{1/p^{\infty}}$ is the
element 0. As $v(\vartheta-K_0^{1/p^{\infty}})$ is an initial segment of
$vK_0^{1/p^{\infty}}$ by \cite[Lemma~2.19]{Ku30}, we conclude that
$v(\vartheta-K_0^{1/p^{\infty}})=(vK_0^{1/p^{\infty}})^{<0}$.
\end{proof}

When we take $K_0=\F_p((t))$ in part 1) of this lemma, where $\F_p$ is the field with
$p$ elements, and $a=1/t$, we obtain ``Abhyankar's Example'', see
\cite[Example~3.12]{Ku31}.

\pars
\begin{lemma}                 \label{l2}
Take a valued field $(K,v)$ of characteristic $p>0$, a decomposition $v=w\circ\bar w$,
and an Artin-Schreier extension of $K$ with Artin-Schreier generator $\vartheta$.
Then the following assertions hold.
\sn
1) $(K(\vartheta)|K,v)$ is a defect extension
with $v(\vartheta-K)=\{\alpha\in vK(\vartheta)\mid \alpha<\bar w(K(\vartheta)w)\}$
if and only if $(K(\vartheta)|K,w)$ is a defect extension with $w(\vartheta-K)=
(wK(\vartheta))^{<0}$.
\sn
2) If $w\vartheta=0$ and $\bar w(\vartheta w-Kw)=(\bar w(K(\vartheta)w))^{<0}$, then
$(K(\vartheta)|K,v)$ is a defect extension with $v(\vartheta-K)=(vK(\vartheta))^{<0}$.
\end{lemma}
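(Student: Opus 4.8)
The plan is to read $v=w\circ\bar w$ as a composite valuation, so that $w$ is the coarsening of $v$ on $L:=K(\vartheta)$ whose associated convex subgroup of $vL$ is $H:=\bar w(Lw)$; then $wL=vL/H$, the canonical projection $\pi\colon vL\to wL$ satisfies $\pi\bigl(v(\vartheta-c)\bigr)=w(\vartheta-c)$ for every $c\in K$, and, $H$ being convex, $\alpha<H$ in $vL$ is equivalent to $\pi(\alpha)<0$. I will repeatedly use that a unibranched Artin--Schreier extension of degree $p$ is a defect extension if and only if it is immediate (for a unibranched prime-degree extension $d=p$ forces $e=f=1$, and conversely), and that $v(\vartheta-K)$, $w(\vartheta-K)$ and $\bar w(\vartheta w-Kw)$ are initial segments of their value groups (cf.\ \cite[Lemma~2.19]{Ku30}). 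As everywhere in this section, the prime-degree extensions occurring at each level are unibranched, so that their defects are defined and Lemma~\ref{imm_deg_p} applies.

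For part~1 I would argue both implications through this projection. Assume the $w$-side. A defect extension of prime degree is immediate for $w$, so $wL=wK$ and $Lw=Kw$; hence $H=\bar w(Lw)=\bar w(Kw)$, and composing with the trivial residue extension gives $vL=vK$ and $Lv=Kv$, so $(L|K,v)$ is immediate, hence a defect extension. Projecting, each $w(\vartheta-c)<0$ yields $v(\vartheta-c)<H$, so $v(\vartheta-K)\subseteq\{\alpha\in vL\mid\alpha<H\}$; conversely, since the values $w(\vartheta-c)$ exhaust $(wK)^{<0}$, for any $\alpha<H$ one finds $c$ with $w(\vartheta-c)>\pi(\alpha)$, whence $v(\vartheta-c)>\alpha$ while still $v(\vartheta-c)<H$, and as $v(\vartheta-K)$ is an initial segment we get $\alpha\in v(\vartheta-K)$. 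For the converse I start from the $v$-side: $(L|K,v)$ is immediate and unibranched, a coarsening of a unibranched extension is unibranched, and projecting the \emph{equality} $v(\vartheta-K)=\{\alpha<H\}$ gives $w(\vartheta-K)=(wL)^{<0}$ (the inclusion $\subseteq$ from $\alpha<H\Rightarrow\pi(\alpha)<0$, the inclusion $\supseteq$ since $\pi$ maps $\{\alpha<H\}$ onto $(wL)^{<0}$). As $w(\vartheta-K)$ then has no maximal element, Lemma~\ref{imm_deg_p} shows $(L|K,w)$ is a defect extension.

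For part~2 the defect is carried by $\bar w$ instead. The hypothesis $\bar w(\vartheta w-Kw)=(\bar w(Lw))^{<0}$ has no maximal element, which forces $\vartheta w\notin Kw$; thus $Lw=Kw(\vartheta w)$ has degree $p$, and the Fundamental Inequality forces $(L|K,w)$ to be unibranched with $e_w=1$, $f_w=p$, i.e.\ $w$-defectless with $wL=wK$. Applying Lemma~\ref{imm_deg_p} to the degree-$p$ Artin--Schreier extension $(Kw(\vartheta w)|Kw,\bar w)$, whose value set $\bar w(\vartheta w-Kw)$ has no maximal element, shows it is immediate for $\bar w$, so $H=\bar w(Lw)=\bar w(Kw)$ and $Lv=Kv$; combined with $wL=wK$ this yields $vL=vK$ and $Lv=Kv$, hence $(L|K,v)$ is immediate of defect $p$. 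To identify $v(\vartheta-K)$ I split on $wc$, using $w\vartheta=0$. If $wc<0$ then $vc<H$ whereas $v\vartheta=\bar w(\vartheta w)\in H$, so $v(\vartheta-c)=vc$, and these exhaust $\{\alpha\in vK\mid\alpha<H\}$; if $wc\ge0$ then $w(\vartheta-c)=0$ (as $\vartheta w\notin Kw$) and $v(\vartheta-c)=\bar w(\vartheta w-cw)$, and by hypothesis these exhaust $H^{<0}$. Since $(vL)^{<0}=\{\alpha<H\}\cup H^{<0}$, we conclude $v(\vartheta-K)=(vL)^{<0}$.

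The step I expect to be the main obstacle is upgrading the projected or cofinal information on the value sets to genuine \emph{equalities}. In part~1 this is the inclusion $v(\vartheta-K)\supseteq\{\alpha<H\}$, which is not formal: it rests on $v(\vartheta-K)$ being an initial segment together with the cofinality of the $v(\vartheta-c)$ in $\{\alpha<H\}$ obtained by lifting the cofinal negative $w$-values. In part~2 the delicate point is that the case $wc<0$ exhausts \emph{all} of $\{\alpha<H\}$ rather than merely a cofinal part; this is exactly where the immediacy of $(Kw(\vartheta w)|Kw,\bar w)$---and hence $vL=vK$---is indispensable, since it guarantees that every value below $H$ already occurs as some $vc$ with $c\in K$.
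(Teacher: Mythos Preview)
Your proof is correct and follows essentially the same approach as the paper: exploit the quotient map $\pi\colon vL\to wL=vL/H$ coming from $v=w\circ\bar w$ to translate between the $v$-level and $w$-level value-set conditions, and invoke Lemma~\ref{imm_deg_p} to pass between ``$v(\vartheta-K)$ has no maximum'' and ``defect extension''. Your treatment of part~2) via the case split on $wc$ is a slight (and arguably cleaner) variant of the paper's cofinality argument; the paper instead shows $v(\vartheta-K)\subseteq (vK(\vartheta))^{<0}$ directly, then observes that $(\bar w(Kw))^{<0}$ is cofinal in $vK^{<0}$ and realizes these values as $v(\vartheta-c)$, leaving the passage to the full equality implicit via the initial-segment property.
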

\begin{proof}
1): We can write $wK(\vartheta)=vK(\vartheta)/\bar w(K(\vartheta)w)$ and $wa=va+\bar w
(K(\vartheta)w)$ for each $a\in K(\vartheta)$. This implies that
$v(\vartheta-K)=\{\alpha\in vK\mid \alpha<\bar w(K(\vartheta)w)$ if and only
if $w(\vartheta-K)=(wK(\vartheta))^{<0}$.
By Lemma~\ref{imm_deg_p},
$(K(\vartheta)|K,v)$ is a defect extension if $v(\vartheta-K)$ is a subset of
$vK(\vartheta)$ without maximal element, and similarly for $w$ in place of $v$. This
fact together with the equivalence we have already shown proves the assertion of our
lemma.
\sn
2): Our assumption implies that $v(\vartheta-K)\subseteq (vK(\vartheta))^{<0}$
since if there is $c\in K$ such that $v(\vartheta-c)\geq 0$, then $vc=
v\vartheta$, so $wc=w\vartheta=0$, and $\bar w(\vartheta w-cw)\geq 0$. On the other
hand, $\bar w(Kw)$ is a convex subgroup of $vK$, so $(\bar w(Kw))^{<0}$ and thus also
$\bar w(\vartheta w-Kw)$ is cofinal in $vK^{<0}$. Since for every $b\in Kw$ there is
$c\in K$ with $cw=b$ and $v(\vartheta-c)=\bar w(\vartheta w-cw)$, it follows that
$v(\vartheta-K)=(vK(\vartheta))^{<0}$.
Again by Lemma~\ref{imm_deg_p}, $(K(\vartheta)|K,v)$ is a defect extension.
\end{proof}

\begin{proposition}                  \label{prepmixcase}
Take any perfect field $K$ of characteristic $p>0$ and a field $L_1$ of characteristic
$0$ carrying a $p$-adic valuation $v_p$ such that $v_p L_1=\Z v_p p$ and $L_1v_p=K$.
Set $a_0:=p$ and by induction, choose elements $a_i$ in the algebraic closure
$L_1^{\rm ac}$ of $L_1$ such that $a_i^p=a_{i-1}$ for $i\in\N$, and set
\[
L_2:=L_1(a_i\mid i\in\N)\>.
\]
Further, take $a\in \Q^{\rm ac}\subseteq L_2^{\rm ac}$ such that
\begin{equation}                           \label{a}
a^p\,-\,a\>=\>\frac 1 p\>.
\end{equation}
Then the following assertions hold.
\sn
1) There is a unique extension of the valuation $v_p$ to $L_2$.
\sn
2) $(L_2,v_p)$ is a deeply ramified field with value group $v_pL_2=
\frac{1}{p^{\infty}}\Z v_p p$ and residue field $L_2v_p=K$.
\sn
3) $(L_2(a)|L_2,v_p)$ is a defect extension of degree $p$.
\sn
4) Assume that $\F_p^{\rm ac}\subseteq K$ and there is $L_0\subseteq L_1$ such that
$(L_0,v_p)$ is henselian with $L_0v_p=\F_p^{\rm ac}$. Then there is a finite extension
$(L|L_2,v_p)$ such that $Lv_p=L_2v_p=K$, $(L,v_p)$ is a deeply ramified field, and
$(L(a)|L,v_p)$ is a Galois defect extension of degree $p$ with independent
defect and associated convex subgroup $\{0\}$.
\end{proposition}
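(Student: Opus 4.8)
The plan is to concentrate the real work on the \emph{normality} of the extension: once $L(a)|L$ is known to be a Galois defect extension of degree $p$ with $v_pL$ of rank one, the two remaining conclusions come for free. Indeed, by parts 1)--3) already proved, $(L_2,v_p)$ is deeply ramified of rank one with residue field $K$ and $(L_2(a)|L_2,v_p)$ is a defect extension of degree $p$. If the finite extension $L$ keeps $(L,v_p)$ deeply ramified with $v_pL$ a subgroup of $\Q v_pp$, then $L$ is an rdr field, so by \cite[Theorem 1.10 1)]{KR} every Galois defect extension of degree $p=\chara Lv_p$ over $L$ has independent defect; and since $v_pL$ has rank one, its only proper convex subgroup is $\{0\}$, so as $\Sigma_\cE$ is a nonempty final segment we get $H_\cE=\cG(\Sigma_\cE)\neq v_pL$, forcing $H_\cE=\{0\}$. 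Thus the whole task is to build a finite $L\supseteq L_2$ over which $X^p-X-\frac1p$ splits, while preserving $Lv_p=K$, the rank of the value group, deep ramification, and the defect.

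The mechanism I would use rests on a residue computation for the roots $\alpha_0=a,\alpha_1,\dots,\alpha_{p-1}$ of $X^p-X-\frac1p$. Over the henselian, discretely valued (hence defectless) field $L_0(\zeta_p)$ one has $v_p\alpha_i=-\frac1p\,v_pp\notin v_p(L_0(\zeta_p))$, so $L_0(\zeta_p)(a)|L_0(\zeta_p)$ is a \emph{defectless}, totally ramified extension of degree $p$. A short calculation shows that the differences $\alpha_i-\alpha_0$ are units whose residues run through $\F_p=\{0\}\cup\mu_{p-1}$, the nonzero residues being $(p-1)$-th roots of unity; this is the mixed-characteristic shadow of the Artin--Schreier situation of Lemma~\ref{l1}. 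Hence the field $D:=L_0(\zeta_p)(\alpha_i-\alpha_0\mid i)$ has residue field $\F_p^{\rm ac}$, is again henselian and defectless, and over it all roots of $X^p-X-\frac1p$ are available, so that $D(a)|D$ is the splitting field, a Galois extension whose group acts on the roots by $\F_p$-translations and is therefore cyclic of order $p$. I then set $L:=L_2\cdot D$.

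To see that $L$ works, note first that $Lv_p=K$ because the residues adjoined lie in $\F_p^{\rm ac}\subseteq K$, and that $v_pL$ is still a rank-one dense subgroup of $\Q v_pp$. Since $L(a)=L_2\cdot D(a)$ contains all the $\alpha_i$, the extension $L(a)|L$ is Galois. Moreover $v_pa=-\frac1p v_pp\in v_pL_2\subseteq v_pL$ and, as $D(a)v_p=\F_p^{\rm ac}\subseteq K$, we get $L(a)v_p=K=Lv_p$; hence $L(a)|L$ is immediate, and being of prime degree it is, by Lemma~\ref{imm_deg_p}, a defect extension of degree $p$ as soon as $a\notin L$. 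Finally $L|L_2$ is a finite extension of the deeply ramified field $L_2$, and deep ramification is preserved under algebraic extensions (Gabber--Ramero; cf.\ Theorem~\ref{GRthm} and \cite{GR}), so $(L,v_p)$ is deeply ramified. Combining this with the reduction of the first paragraph yields independent defect and associated convex subgroup $\{0\}$.

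The step I expect to be the main obstacle is the simultaneous control of normality and defect: proving that $D(a)|D$ is genuinely cyclic of degree $p$, and that $a\notin L$ (equivalently, that the defect of $L(a)|L$ is $p$ rather than $1$). The first point is where the residue identity ``differences reduce to $\F_p$'' is essential, and where the henselian subfield $L_0$ enters, since it lets me run the argument over the defectless field $L_0(\zeta_p)$ and descend; the hypothesis $\F_p^{\rm ac}\subseteq K$ is what guarantees that none of the tame data adjoined enlarges the residue field beyond $K$ (and, via henselization, that the relevant extensions are unibranched, so that the defect is well defined). The second point is a non-collapse statement under the base change from $D$ to $L_2\cdot D$; I would establish it by showing that $v_p(a-L)$ still has no maximal element, whence $a\notin L$, after which Lemma~\ref{imm_deg_p} together with the equality $[L(a):L]=d(L(a)|L)$ for the immediate extension $L(a)|L$ gives defect exactly $p$.
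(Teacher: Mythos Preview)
Your high-level strategy coincides with the paper's: pass to a finite extension of $L_2$ over which the degree-$p$ extension becomes Galois, verify that deep ramification and the defect persist, and then invoke \cite[Theorem~1.10~(1)]{KR} and rank~$1$ to get independent defect with $H_\cE=\{0\}$. The difference is in how the ``defect survives the base change'' step is handled. The paper does not attempt to show directly that $v_p(a-L)$ has no maximum; instead it chooses $\zeta'$ with $[L_0(\zeta'):L_0]\mid(p-1)!$ so that $L_0(\zeta',a)|L_0(\zeta')$ is Galois, observes that $L_2(\zeta')|L_2$ is \emph{tame} (degree prime to $p$, and the residue extension is trivial because $L_0v_p=\F_p^{\rm ac}$), and applies Lemma~\ref{defthrutame} to transport the defect of part~3) to $L:=L_2(\zeta')$ unchanged. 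This one-line appeal to tameness is precisely the missing ingredient in your ``second point''; your immediacy argument is also incomplete as written, since $v_pa\in v_pL$ and $D(a)v_p\subseteq K$ do not by themselves give $v_pL(a)=v_pL$ and $L(a)v_p=Lv_p$.

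In fact your own residue computation, pushed one step further, makes the auxiliary field $D$ superfluous. You note that the differences $\alpha_i-\alpha_0$ reduce to $\F_p$; equivalently, $f(a+Y)\equiv Y^p-Y$ modulo $\cM_{L_0(a)}$. But $L_0(a)$ is a finite extension of the henselian field $L_0$ and hence is itself henselian, so Hensel's Lemma lifts all $p$ simple roots of $Y^p-Y$ into $L_0(a)$. Thus $f=X^p-X-\frac1p$ already splits in $L_0(a)\subseteq L_2(a)$, and $L_2(a)|L_2$ is Galois without any base change. One may then take $L=L_2$: part~3) gives the defect, part~2) gives deep ramification, and the compositum $L_2\cdot D$ together with the attendant non-collapse problem never arises.
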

\begin{proof}
By our choice of the $a_i\,$, $\frac{v_pp}{p^i}=v_pa_i\in v_pL_1(a_i)$. Therefore,
\[
p^i\>\leq\> (v_pL_1(a_{i}):v_pL_1) \>\leq\> (v_pL_1(a_{i}):v_pL_1)[L_1(a_{i})v_p:
L_1 v_p] \>\leq\> [L_1(a_{i}):L_1] \>\leq\> p^i\>.
\]
Hence equality holds everywhere, and $[L_1(a_{i})v_p:L_1 v_p]=1$. We thus obtain that
$v_pL_1(a_i)=\frac{1}{p^i}v_pL_1$
and $L_1(a_i)v_p=L_1 v_p$. Consequently,
\[
v_pL_2\>=\>\bigcup_{i\in\N} v_pL_1(a_i)\>=\>\frac{1}{p^{\infty}}\Z \quad\mbox{and}
\quad L_2v_p\>=\>L_1 v_p\>=\>K\>,
\]
and the extension $(L_2|L_1,v_p)$ is unibranched, which proves assertion 1). We see
that $v_pL_2$ is $p$-divisible, so $(L_2,v_p)$ satisfies (DRvg).
In order to show that $(L_2,v_p)$ is a deeply ramified field it
remains to show that it satisfies (DRvr).

Take $b\in \cO_{L_2}\,$. Then $b\in L_1(a_i)$ for some $i\in\N$ and we can write
\[
b \>=\> \sum_{j=0}^{p^i-1} c_j a_i^j\>
\]
with $c_j\in L_1\,$. Since the values $v_pa_i^j$, $0\leq j\leq p^i-1$ lie in distinct
cosets modulo $v_pL_1$ (hence the elements $a_i^j$, $0\leq j\leq p^i-1$ form a
valuation basis of $(L_1(a_i)|L_1,v_p)$), we have that $v_pb=\min_{0\leq j\leq p^i-1}
v_p c_j a_i^j$. As $b\in \cO_{L_2}\,$, it follows that $c_j a_i^j\in \cO_{L_2}$ for all $j$.
We observe that $v_p a_i^j \leq \frac{p^i-1}{p^i}v_p p <v_p p$, so $v_p c_j\in \Z
v_p p$ cannot be negative. This shows that $c_j\in \cO_{L_1}$ for all $j$.

For every $c\in\cO_{L_1}$ there is $d\in\cO_{L_1}$ such that $c\equiv d^p \mod
p\cO_{L_1}\,$; indeed, as $K$ is perfect, there is $\xi\in K$ such that $cv_p=\xi^p$,
so we can choose $d\in\cO_{L_1}$ such that $dv_p=\xi$. Then we obtain
$v_p(c-d)\geq v_pp$.

For each $j$ we now choose $d_j\in L_1$ such that $c_j\equiv d_j^p \mod p\cO_{L_1}\,$.
Then
\[
\left(\sum_{j=0}^{p^i-1} d_j a_{i+1}^j\right)^p \>\equiv\> \sum_{j=0}^{p^i-1} d_j^p
(a_{i+1}^p)^j \>\equiv\>\sum_{j=0}^{p^i-1} c_j a_i^j \>=\> b \mod p\cO_{L_1(a_i)}\>.
\]
In view of \cite[Lemma~4.1 (2)]{KR}, this shows that $(K,v_p)$ satisfies (DRvr) and
is therefore a deeply ramified field, which proves assertion 2).

\parm
Our next aim is to show that the extension
$(L_2(a)|L_2,v_p)$ is nontrivial and immediate. For each $i\in\N$, we set
\[
b_i \>=\> \sum_{j=1}^i \frac 1 {a_j}  \>\in\> L_1(a_i)
\]
and compute, using \cite[Lemma~2.17 (2)]{KR}:
\begin{eqnarray*}
(a-b_i)^p - (a-b_i) &\equiv& a^p - \sum_{j=1}^i \frac 1 {a_j^p} - a +
\sum_{j=1}^i \frac 1 {a_j} \\
&=& \frac 1 p  - \frac 1 p -\sum_{j=1}^{i-1} \frac 1 {a_j}+\sum_{j=1}^i \frac 1 {a_j}
\>=\> \frac 1 {a_i}  \mod \cO_{L_1(a_i)}\>.
\end{eqnarray*}
It follows that $v_p(a-b_i)<0$ and
\[
-\frac{v_pp}{p^i}\>=\> v_p\frac{1}{a_i} \>=\> \min \{pv_p(a-b_i),v_p(a-b_i)\}
\>=\> pv_p(a-b_i)\>,
\]
whence
\begin{equation}                                  \label{varth-bi}
v_p(a-b_i) \>=\> -\frac{v_pp}{p^{i+1}}\>.
\end{equation}
We have that
\begin{eqnarray*}
p &\leq& (v_pL_1(a_i,a):v_pL_1(a_i)) \>\leq\> (v_pL_1(a_i,a):v_pL_1(a_i))
[L_1(a_i,a)v_p:
L_1(a_i)v_p] \\
&\leq& [L_1(a_i,a):L_1] \>\leq\> p\>.
\end{eqnarray*}
Thus equality holds everywhere and we have that $(v_pL_1(a_i,a):v_pL_1(a_i))=p$, the
extension is unibranched, $L_1(a_i,a)v_p=L_1(a_i)v_p=L_1 v_p$, and for all $i\in\N$,
$a\notin L_1(a_i)$. Hence $a\notin L_2$, and we have:
\[
v_pL_2(a)\>=\>\bigcup_{i\in\N} v_pL_1(a_i,a)\>=\>\frac{1}{p^{\infty}}\Z
\>=\> v_pL_2 \quad\mbox{and}\quad L_2(a)v_p\>=\>L_1 v_p \>=\> L_2v_p\>.
\]
This shows that $(L_2(a)|L_2,v_p)$ is nontrivial and immediate, as asserted.
The extension is also unibranched since each extension $(L_1(a_i,a)|L_1(a_i),v_p)$
is unibranched. Therefore, it is a defect extension of degree $p$, which proves
assertion 3).

\pars
Since $[L_0(a):L_0]=p$, there is an element $\zeta'\in L_0^{\rm ac}$ such that
$[L_0(\zeta'):L_0]$ divides $(p-1)!$ and $L_0(a,\zeta')|L_0(\zeta')$ is Galois. As
$(L_0,v_p)$ is henselian, $p$ does not divide $[L_0(\zeta'):L_0]$, and $L_0v_p$ is
algebraically closed, the Lemma of Ostrowski shows that $[L_0(\zeta'):L_0]=
(v_pL_0(\zeta'):v_pL_0)$. It follows that also $[L_1(\zeta'):L_1]=(v_pL_1(\zeta'):
v_pL_1)$, and that $(L_1(\zeta')|L_1,v_p)$ is a tame extension. Hence by
Lemma~\ref{defthrutame}, also $(L_1(a,\zeta')|L_1(\zeta'),v_p)$ is a defect
extension. By \cite[Theorem 1.5]{KR},
the algebraic extension $(L_1(\zeta'),v_p)$ of $(L_1,v_p)$ is again a deeply
ramified field and hence an rdr field. Thus it follows from Theorem 1.2 that the Galois
extension $(L_1(a,\zeta')|L_1(\zeta'),v_p)$ has independent defect. Since $(L_2,v_p)$
has rank $1$, the convex subgroup associated with the extension $(L_2(\zeta',a)|L_2
(\zeta'),v_p)$ is $\{0\}$. With $L:=L_2(\zeta')$, we have now proved assertion 4).
\end{proof}

%
%
\subsection{The case of equal characteristic: some examples}     
\begin{example}                  \label{e1}
Take $K_0=\F_p((t))$. Then $K:=K_0^{1/p^\infty}=\F_p((t))^{1/p^\infty}$ is a perfect
field, so under the canonical $t$-adic valuation $v_t$ it is a deeply ramified
field. Moreover, $(K,v_t)$ is henselian and of rank $1$. Now set $L_0:=K((x))$ and
$L:=L_0^{1/p^\infty}$, and equip $L$ with the canonical $x$-adic valuation $v_x\,$.
Both $v_x$ and $v_t$ are henselian, hence so is their composition $v:=v_x\circ v_t$ on
$L$. As $L$ is perfect, $(L,v)$ is a deeply ramified field. Its value group
$vL$ has rank $2$, i.e., it has two proper convex subgroups. Let $\vartheta_t$ be a
root of $X^p-X-\frac{1}{t}$, and
$\vartheta_x$ a root of $X^p-X-\frac{1}{x}$. We note that $K=Lv_x$.

By part 1) of Lemma~\ref{l1}, both extensions $(L(\vartheta_x)|L,v)$ and
$(L(\vartheta_x)|L,v_x)$ are defect extensions with independent defect. By part 2) of
Lemma~\ref{l1}, $v_x(\vartheta_x-L)=v_xL^{<0}$, which by part 1) of Lemma~\ref{l2}
implies that $v(\vartheta_x-L)=\{\alpha\in vL\mid\alpha<v_tK\}$. Hence for $\cE_x=
(L(\vartheta_x)|L,v)$, its associated convex subgroup $H_{\cE_x}$ is the convex
subgroup $v_tK$ of $vL$.

Again by part 1) of Lemma~\ref{l1}, the extension $(K(\vartheta_t)|K,v_t)$ has
independent defect, and by part 2) of Lemma~\ref{l1}, $v_t(\vartheta_t-K)=
v_t K^{<0}$. By part 2) of Lemma~\ref{l2}, $(L(\vartheta_t)|L,v)$ is a defect
extension with $v(\vartheta_t-L)=(vL(\vartheta_t)^{<0}$. Therefore, $\{0\}$ is the
convex subgroup associated with the defect extension $(L(\vartheta_t)|L,v)$.

We have shown that both convex subgroups of $vL$, $v_tK$ and $\{0\}$, appear as the
convex subgroups associated with Galois defect extensions of $(L,v)$.
\end{example}

Let us present a modification of this example.
\begin{example}                  \label{e2}
In the previous example, we replace $K$ by some algebraically closed (or just
henselian defectless) field with an
arbitrary nontrivial valuation $v_t\,$. Then $(K,v_t)$ has no defect extensions, and
$H=v_tK$ will be the only convex subgroup of $vL$ associated with Galois defect
extensions. This is seen as follows. As in Example~\ref{e1} we have that
$(L(\vartheta_x)|L,v)$ is a defect extension with $v(\vartheta_x-L)=\{\alpha\in vL\mid
\alpha<v_tK\}$. On the other hand, suppose that there is a defect
extension $(L(\vartheta)|L,v)$ with $v(\vartheta-L)=vL^{<0}$. Then there is $b\in
L$ such that $v(\vartheta-b)\in v_tK$. Set $\bar\vartheta:=(\vartheta-b)v_x\,$. Since
$\vartheta-b$ is a root of an Artin-Schreier polynomial over $L$, $\bar\vartheta$ is
a root of an Artin-Schreier polynomial over
$K$. By construction, $(K(\bar\vartheta)|K,v_t)$ cannot be a defect extension,
so there is $\bar c\in K$ such that $v_t(\bar\vartheta-\bar c)$ is the maximum of
$v_t(\bar\vartheta-K)$. Choose $c\in L$ such that $cv_x=\bar c$. Then $v(\vartheta
-b-c)$ is the maximum of $v(\vartheta-L)$, contradicting our assumption that
$(L(\vartheta)|L,v)$ is a defect extension.

\pars
At the other extreme, we can keep $(K,v_t)$ and $(L_0,v_x)$ as in the previous example,
but now take $(L,v_x)$ to be a maximal purely wild extension of $(L_0,v_x)$. As $L_0
v_x=K$ is perfect, we have $Lv_x=L_0v_x=K$. By Lemma~\ref{mpwext}, $(L,v_x)$ is a tame
field and thus has
no defect extensions. Therefore $v_tK$ cannot appear as a convex subgroup associated
with any Galois defect extension; indeed, if $v(\vartheta-L)=\{\alpha\in vL\mid
\alpha<v_tK\}$, then by part 1) of Lemma~\ref{l2}, $(L(\vartheta)|L,v_x)$
would be a defect extension. However, as in the previous example one shows that
$\cE_t=(L(\vartheta_t)|L,v)$ is a defect extension with $H_{\cE_t}=\{0\}$.
\end{example}

%
%
\subsection{The case of equal characteristic: a general construction}  
We will now present a much more general construction.
Given any ordered index set $I$ and for every $i\in I$ an arbitrary ordered abelian
group $C_i\,$, we can form the \bfind{Hahn sum} $\coprod_{i\in I}C_i\,$. As an abelian
group, this is the direct sum of the groups $C_i\,$, represented as the set of all
tuples $(\alpha_i)_{i\in I}\,$ with only finitely many of the $\alpha_i\in C_i$
nonzero. An ordering on $\coprod_{i\in I}C_i$ is introduced as follows. For
$(\alpha_i)_{i\in I}\in \coprod_{i\in I}C_i\,$, set $i_{\rm min}:=
\min\{i\in I\mid \alpha_i\ne 0\}$. Then define $(\alpha_i)_{i\in I}>0$ if
$\alpha_{i_{\rm min}}>0$. If all $C_i$ are archimedean ordered, then the principal
convex subgroups are exactly the subsets of the form $\{(\alpha_i)_{i\in I}\in
\coprod_{i\in I}C_i \mid \alpha_i=0 \mbox{ for all } i<i_0\}$ for some $i_0\in I$;
this subgroup is generated by any $(\alpha_i)_{i\in I}\in \coprod_{i\in I}C_i$ with
$\alpha_{i_0}\ne 0$; likewise, the subprincipal convex subgroups are exactly the
subsets of the form $\{(\alpha_i)_{i\in I}\in \coprod_{i\in I}C_i \mid \alpha_i=0
\mbox{ for all } i\leq i_0\}$ for some $i_0\in I$.

Now take any ordered index set $I$. Set $C_i=\Z$ for all $I$ and let $\Gamma_0$ be
the Hahn sum $\coprod_{i\in I}C_i\,$. For each $\ell\in I$ let
$1_\ell$ denote the element $(\alpha_i)_{i\in I}$ with $\alpha_i=1\in\Z$ if $i=\ell$
and $\alpha_i=0$ otherwise. Now the elements $1_\ell$ generate all principal convex
subgroups of $\Gamma_0\,$. Note that if $\ell<\ell'$, then $1_\ell\gg 1_{\ell'}\,$,
that is, $1_\ell>n 1_{\ell'}$ for all $n\in\N$.

Take a perfect field $k$ of characteristic $p>0$ and a set $\{t_i\mid i\in I\}$ of
elements algebraically independent over $k$ and define a valuation $v$ on the field
$k(t_i\mid i\in I)$ by setting $vt_i=1_i$ for each $i\in I$. Let $(K_0,v)$ be the henselization of $(k(t_i\mid i\in I),v)$.

For each $i\in I$ there are:
\sn
$\bullet$ \ a
decomposition $v=v_i\circ\bar v_i\,$, where $v_i$ is the finest coarsening
of $v$ on $K_0$ that is trivial on $t_i$ and $\bar v_i$ is the valuation induced by
$v$ on the residue field $K_0 v_i\,$, which can be identified with $k(t_j\mid
i\leq j\in I)$, and
\sn
$\bullet$ \ a decomposition $\bar v_i=w_i\circ\bar w_i\,$, where $w_i$ is the
$t_i$-adic valuation on $K_0 v_i$ and $\bar w_i$ is the valuation induced by
$\bar v_i$ on the residue field $K_0 v_iw_i\,$, which can be identified with
$k(t_j\mid i< j\in I)$.
\sn
Note that $v_j$ is strictly coarser than $v_i$ if $j<i$.

\pars
We take $K_1$ to be the perfect hull of
$K_0$, that is, $K_1=k(t_i^{1/p^n}\mid i\in I,\,n\in\N)$. The valuations $v$ and
$v_i\,$, $i\in I$, have unique extensions to $K_1\,$, and $vK_1$ is the $p$-divisible
hull $\frac 1 {p^\infty} \Gamma$ of $vK_0\,$. Further, $K_1 v_i$ is
the perfect hull $k(t_j^{1/p^n}\mid i\leq j\in I,\,n\in\N)$ of
$k(t_j\mid i\leq j\in I)$, so the valuations $\bar v_i$ and $w_i$ have unique
extensions to $K_1 v_i$ and the decompositions $v=v_i\circ \bar v_i$ again hold on
$K_1\,$. Likewise, $K_1 v_i w_i$ is the perfect hull $k(t_j^{1/p^n}\mid i< j\in I,
\,n\in\N)$ of
$k(t_j\mid i< j\in I)$, so also the valuations $\bar v_i$ have unique extensions to
$K_1 v_i w_i$ and the decompositions $v_i=w_i\circ \bar w_i$ again hold on $K_1\,$.

We set $\Gamma:=vK_1=\frac 1 {p^\infty} \Gamma_0$ and define $H_i$ to be the largest
convex subgroup of $\Gamma$ that does not contain $1_i\,$, that is, $H_i=
\bar w_i(K_1 v_i w_i)$. Consequently, the $H_i$ are exactly all
subprincipal convex subgroups of $\Gamma$. The principal convex subgroups of $\Gamma$
are exactly all smallest convex subgroups that contain $1_i$ for some $i\in I$;
they are of the form $\bar v_i(K_1 v_i)$.

\pars
The next theorem proves part 1) of Theorem~\ref{prescracs}.
\begin{theorem}                             \label{t1}
Take any subset $J\subseteq I$. Then there exists an algebraic extension $(K_2,v)$
of $(K_1,v)$ which is a henselian deeply ramified
field and such that the convex subgroups associated with Galois defect extensions of
prime degree of $(K_2,v)$ are exactly the convex subgroups $H_j$ with $j\in J$.
\end{theorem}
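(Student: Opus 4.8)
The plan is to realize $K_2$ as an algebraic extension of $K_1$ and to control, one archimedean component of $\Gamma$ at a time, which components still carry defect. Two of the three requirements are almost free: since $K_2|K_1$ is algebraic and $K_1$ is henselian, $K_2$ is again henselian; and since $K_1$ is a perfect field of characteristic $p$, hence deeply ramified, its algebraic extension $K_2$ is deeply ramified by \cite[Theorem 1.5]{KR}. Thus the whole content is to arrange that the convex subgroups associated with Galois defect extensions of prime degree of $(K_2,v)$ are exactly the $H_j$ with $j\in J$. I would split this into a realization step (each $H_j$, $j\in J$, does occur) and an annihilation step (nothing else occurs), imitating Examples~\ref{e1} and~\ref{e2} respectively.

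For the realization, fix $j\in J$ and let $\vartheta_j$ be a root of $X^p-X-t_j^{-1}$. Over the non-perfect field $K_0$ the value $-1_j=v(t_j^{-1})$ is not $p$-divisible in $\Gamma_0$, so Lemma~\ref{l1} yields over the perfect hull $K_1$ a defect extension of degree $p$ with independent defect. To pin down the associated subgroup I would use the decomposition $v=w^{(j)}\circ\bar w_j$, where $w^{(j)}:=v_j\circ w_j$ has value group $\Gamma/H_j$ (whose bottom component is generated by the image of $t_j$) and $\bar w_j$ has value group $H_j$. Applying Lemma~\ref{l1}~2) to the rank-one $t_j$-adic valuation $w_j$ over the perfect residue field $K_1v_jw_j$, then Lemma~\ref{l2}~2), gives $w^{(j)}(\vartheta_j-K_1)=(w^{(j)}K_1(\vartheta_j))^{<0}$; Lemma~\ref{l2}~1) then lifts this to $v(\vartheta_j-K_1)=\{\alpha\in\Gamma\mid\alpha<H_j\}$, so that $\cG(v(\vartheta_j-K_1))=H_j$ and, by Theorem~\ref{Sigma_E-H_E}~3), the associated subgroup is $H_j$. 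Finally, if $K_2|K_1$ is tame at the $j$-th component, then $(K_2(\vartheta_j)|K_2,v)$ is still a defect extension with the same associated subgroup by Lemma~\ref{defthrutame}.

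For the annihilation, fix $i\notin J$; since $K_2$ has characteristic $p$, every such extension is Artin--Schreier, so an independent-defect extension with associated subgroup $H_i$ would have a generator $\vartheta$ with $v(\vartheta-K_2)=\{\alpha<H_i\}$. By Lemma~\ref{l2}~1) this corresponds to a defect extension of the coarsening $(K_2,w^{(i)})$ whose approximation set fills $(w^{(i)}K_2(\vartheta))^{<0}$, and such an extension can exist only if the single rank-one $t_i$-adic component $w_i$ is itself not defectless. Hence it suffices to remove defect from that one component, which — as in the second half of Example~\ref{e2} — I would secure by a maximal purely wild extension at the $i$-th component: by Lemma~\ref{mpwext} it is tame, hence defectless there, and since the residue field $K_1v_iw_i$ is perfect the purely wild extension changes nothing but the ($p$-divisible) value group of that component. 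Convex subgroups outside the family $\{H_i\}$ never arise, because the full-negative-part defect on which the realization rests is bound to a genuine rank-one uniformizer, available only for the subprincipal $H_i$.

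The main obstacle is to perform these annihilations coherently over the whole totally ordered set $I$ without disturbing the layers in $J$. Tamifying the coarsening $w^{(i)}$ outright is too crude: as $w^{(i')}$ is a coarsening of $w^{(i)}$ whenever $i'<i$, forcing $(K_2,w^{(i)})$ to be defectless would also kill $H_{i'}$ for all $i'<i$, wrecking the realizations for those $i'\in J$. The correct operation must tamify only the single $i$-th archimedean component $w_i$ on $K_1v_i$, leaving the components above and below it intact, and this has to be carried out simultaneously for every $i\notin J$ by lifting these layer-wise maximal purely wild extensions to one algebraic extension $K_2$ of $K_1$. Establishing that such a coherent lift exists, that the residue fields at the $J$-layers stay perfect (so the defect of each $\vartheta_j$ survives), and that no new associated subgroups are introduced — especially when $I$ has limit positions with no innermost or outermost component — is the delicate heart of the argument; a concluding bookkeeping check that $vK_2=\Gamma$ and $K_2v=K$ throughout ensures that the $H_i$ remain literally the same convex subgroups.
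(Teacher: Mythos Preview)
Your realization step and your identification of the annihilation mechanism (maximal purely wild extension at the $i$-th archimedean component) match the paper exactly. The gap is precisely where you yourself place it: you propose to \emph{lift} the layer-wise tamifications for all $i\notin J$ to one coherent extension $K_2$, call this ``the delicate heart of the argument'', and stop. The paper resolves this not by constructing such a lift, but by a maximality trick that sidesteps coherence entirely: it takes $(K_2,v)$ to be a \emph{maximal} algebraic extension of $(K_1,v)$ subject only to $vK_2=vK_1$ and, for every $j\in J$, the residue condition $K_2v_j=K_2v_jw_j(t_j)^{1/p^\infty}$. The $J$-realizations then survive automatically, because the residue condition is exactly what is needed to re-run Lemma~\ref{l1}~2) over $K_2$. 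If some $H_i$ with $i\notin J$ were still associated, one performs your single-layer purely wild extension at $i$, checks (this is the actual work) that the resulting $K_3$ still satisfies $vK_3=vK_1$ and the residue conditions at every $j\in J$ --- separately for $j>i$ and $j<i$ --- and thereby contradicts the maximality of $K_2$. So the missing idea is Zorn, not a transfinite coherent construction.

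Two smaller points. First, your appeal to Lemma~\ref{defthrutame} to preserve the associated subgroup is not enough: that lemma preserves only the numerical defect $d$, not the set $v(\vartheta_j-K_2)$ and hence not $H_{\cE}$. The paper does not assume $K_2|K_1$ is tame at the $j$-th layer; it keeps the explicit condition $K_2v_j=K_2v_jw_j(t_j)^{1/p^\infty}$ and re-derives the full-negative-part description of $w_j(\vartheta_j-K_2v_j)$ from Lemma~\ref{l1}~2), then lifts via Lemma~\ref{l2}. Second, your sentence about convex subgroups outside the family $\{H_i\}$ is a reasonable heuristic, but neither you nor the paper turns it into an argument; the paper's contradiction step is phrased only for $i\in I\setminus J$.
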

\begin{proof}
Since $(K_1,v)$ is henselian and perfect, each algebraic extension $(K,v)$ of
$(K_1,v)$ is a henselian deeply ramified field.

For each $i\in I$ we let $\vartheta_i$ be a root of $X^p-X-\frac{1}{t_i}$. Since $v_i$ is
trivial on $t_i\,$, we can identify $\vartheta_i$ with $\vartheta_i v_i\,$. By part 1)
of Lemma~\ref{l1}, $(K_1(\vartheta_i)|K_1,v)$, $(K_1v_i(\vartheta_i)|K_1 v_i,\bar v_i)$
and $(K_1v_i(\vartheta_i)|K_1 v_i,w_i)$ are defect extensions with independent defect.
By construction, $K_1 v_i=K_1 v_i w_i(t_i^{1/p^n}\mid n\in\N)=K_1 v_i w_i
(t_i)^{1/p^{\infty}}$, where $K_1 v_i w_i$
is a perfect field. Hence by part 2) of Lemma~\ref{l1}, $w_i(\vartheta_i-K_1 v_i)=
(w_i (K_1 v_i))^{<0}$, which by part 1) of Lemma~\ref{l2} implies that $\bar v_i
(\vartheta_i-K_1v_i)=\{\alpha\in \bar v_i(K_1v_i)\mid\alpha<\bar v_i(K_1v_iw_i)\}=
\{\alpha\in \bar v_i(K_1v_i)\mid\alpha<H_i\}$. We claim that this implies that
$v(\vartheta_i-K_1)=
\{\alpha\in vK_1\mid\alpha<H_i\}$, that is, $H_i$ is the convex subgroup associated
with the defect extension $(K_1(\vartheta_i)|K_1,v)$. For the proof of the claim,
observe that $K_1v_i\subset K_1$ and $v|_{K_1v_i}=\bar v_1\,$; so
$\bar v_i(\vartheta_i-K_1v_i)\subseteq v(\vartheta_i-K_1)$. We show that the former is
cofinal in the latter, which will prove our claim. Take any $c\in K_1$. If $vc<
v\vartheta_i\,$, then $\bar v_i\vartheta_i=v\vartheta_i>v(\vartheta_i-c)$. If $vc\geq
v\vartheta_i\,$, then we can write $c=cv_i+c'$ with $cv_i\in K_1 v_i$ and $c'\in
K_1$ with $v_i c'>0$. It follows that $vc'>\bar v_i(K_1 v_i)$ and consequently,
$vc'>\bar v_i(\vartheta_i-cv_i)$ and $v(\vartheta_i-c)=v_i(\vartheta_i-cv_i)\in
\bar v_i(\vartheta_i-K_1v_i)$. Hence by
our construction, all $H_i$ for $i\in I$ appear as the convex subgroups associated
with Galois defect extensions of $(K_1,v)$. We now have to find an algebraic extension
of $(K_1,v)$ which will admit exactly all $H_i$ for $i\in I\setminus J$.

\pars
Let $(K_2,v)$ be a maximal algebraic extension of $(K_1,v)$ for which $vK_2=vK_1\,$,
the above decompositions carry over to $K_2$ for suitable extensions of the
valuations $v_i,\,\bar v_i,\, w_i,\, \bar v_i\,$, and for all $j\in J$, $K_2 v_j=
K_2v_jw_j(t_j)^{1/p^\infty}$. As $K_2v_jw_j$ is perfect, being an algebraic
extension of the perfect field $K_1v_jw_j$, part 2) of Lemma~\ref{l1} shows that
$(K_2v_j(\vartheta_j)|K_2v_j,w_j)$ is (still) a defect extension with $w_j(\vartheta_j
-K_2v_j)=(w_j(K_2v_j))^{<0}$. By part 2) of Lemma~\ref{l2},
$(K_2(\vartheta_j)|K_2,v_j\circ w_j)$ is a defect extension with $v_j\circ w_j
(\vartheta_j-K_2)=(v_j\circ w_j(K_2))^{<0}$. Now by part 1) of
Lemma~\ref{l2}, $(K_2(\vartheta_j)|K_2,v)$ is a defect extension with $v
(\vartheta_j-K_2)=\{\alpha\in vK_2\mid \alpha<\bar v_j(K_2 v_j w_j)\}$, that is,
$H_j=\bar v_j(K_1 v_j w_j)=\bar v_j(K_2 v_j w_j)$ is its associated convex subgroup.

\pars
Suppose that there is some $i\in I\setminus J$ such that $H_i$ is also the convex
subgroup associated with some Galois defect extension of $(K_2,v)$. In this case we
take $(L,w_i)$ to be a maximal purely wild extension $(L,w_i)$ of $(K_2v_i,w_i)$.
By Lemma~\ref{mpwext}, $(L,w_i)$ is a tame field and thus does not have any nontrivial
defect extensions. As $K_2v_i$ is perfect, being an algebraic extension of $K_1v_i\,$,
we have that $(L,w_i)$ is an immediate extension of $(K_2v_i,w_i)$, that is, $w_i L=
w_i(K_2v_i)$ and $Lw_i=K_2v_i w_i\,$. We take $(K_3,v_i)$ to be an algebraic
extension of $(K_2,v_i)$ such that $v_i K_3=v_i K_2$ and $K_3v_i=L$, and that
$[K'_2:K_2]=[K'_2 v_i:K_2 v_i]$ holds for every finite subextension $K'_2|K_2$ of
$K_3|K_2$; for the construction of such extensions, see \cite[Section 2.3]{Ku21}. We
set $v=v_i\circ w_i\circ\bar v_i$ on $K_3$; since $v_i K_3=v_i K_2$ and
$(K_3v_i,w_i)=(L,w_i)$ is an immediate extension of $(K_2v_i,w_i)$, also $(K_3,v)$
is an immediate extension of $(K_2,v)$.

Take any $j\in J$; we will show that we still have $K_3v_j=K_3 v_j w_j
(t_i)^{1/p^{\infty}}$. Since $K_3 v_i w_i$ is perfect, being an algebraic extension
of the perfect field $K_1 v_i w_i\,$, it will then follow as in the beginning of
this proof that $(K_3(\vartheta_j)|K_3,v)$ is still a defect
extension with associated convex subgroup $H_j\,$.

First assume that $j>i$. Then $K_3 v_j=K_2 v_j=K_2v_jw_j(t_j)^{1/p^\infty}=K_3v_jw_j
(t_j)^{1/p^\infty}$ since $K_3v_iw_i=Lw_i=K_2v_iw_i$ and $K_3 v_j$ and $K_3 v_jw_j$
are equal to or residue fields of $K_3v_iw_i\,$.
%

Now assume that $j<i$. Suppose that $K_3 v_j$ properly contains $K_3v_jw_j
(t_j)^{1/p^\infty}$. Then there is a finite subextension $K'_2|K_2$ of $K_3|K_2$
such that $K'_2 v_j$ properly contains $K'_2 v_jw_j(t_j)^{1/p^\infty}$. Using that
$K_2 v_j=K_2v_jw_j(t_j)^{1/p^\infty}$ and that $K'_2 v_i$ is equal to or a residue
field of $K'_2 v_jw_j$ and $K_2 v_i$ is equal to or a residue field of $K_2 v_jw_j\,$,
we compute:
\begin{eqnarray*}
[K'_2:K_2]&\geq& [K'_2 v_j:K_2 v_j]\> >\> [K'_2 v_jw_j(t_j)^{1/p^\infty}:K_2 v_jw_j
(t_j)^{1/p^\infty}]\\
&= & [K'_2 v_j w_j:K_2 v_j w_j]\>\geq\> [K'_2 v_i:K_2 v_i]\>=\>
[K'_2:K_2]\>.
\end{eqnarray*}
This contradiction proves that $K_3v_j=K_3 v_j w_j(t_i)^{1/p^{\infty}}$ also holds in
this case.

\pars
Finally, we show that $H_i$ cannot appear as the convex subgroup associated with
any Galois defect extension of $(K_3,v)$. This will contradict the maximality
of $(K_2,v)$ and show that it satisfies the statement of our theorem. Suppose the
contrary, and let $(K_3(\vartheta)|K_3,v)$ be an Artin-Schreier defect extension
with $H_i$ as its associated convex subgroup. Since $\bar v_i(K_3 v_i)$ properly
contains $H_i=\bar v_i(K_1 v_i w_i)$, it follows that there is some $b\in K_3$ such
that $v(\vartheta-b)\in \bar v_i(K_3 v_i)$. With $\vartheta':=\vartheta-b$ we
obtain that $\bar v_i(\vartheta'-K_3v_i)=\{\alpha\in \bar v_i(K_3v_i)\mid\alpha<
\bar v_i(K_1 v_i w_i)\}$. Hence Lemma~\ref{imm_deg_p} shows that
$(K_3v_i(\vartheta v_i)|K_3v_i,
\bar v_i)$ is a nontrivial defect extension. Thus by part 1) of Lemma~\ref{l2}, also
$(K_3v_i(\vartheta v_i)|K_3v_i,w_i)$ is a nontrivial defect extension. However, this
contradicts the fact that by our construction, $(K_3v_i,w_i)$ is a tame
and thus defectless field with respect to $w_i\,$.
\end{proof}

\mn
%
%
\subsection{The case of mixed characteristic}                
We choose a perfect field $K$ of characteristic $p>0$ containing $\F_p^{\rm ac}$.
We denote the $p$-adic valuation on $\Q$ by $v_p$ and take an algebraic extension
$(L_0,v_p)$ such that $(L_0,v_p)$ is henselian, $v_pL_0=v_p\Q$ and $L_0v_p=
\F_p^{\rm ac}$. Then we construct an extension $(L_1,v_p)$ of $(L_0,v_p)$ such that
$v_pL_1=v_p\Q$ and $L_1v_p=K$. See \cite[Section~2.3]{Ku21} for information on the
construction of such extensions. By Proposition~\ref{prepmixcase} there is an
algebraic extension $(L,v_p)$ of $(L_0,v_p)$ such that $Lv_p=K$ and $(L,v_p)$ is a
deeply ramified field admitting a Galois defect extension $(L(a)|L,v_p)$ of degree
$p$ with independent defect and associated convex subgroup $\{0\}$.

\begin{example}
Now take any nontrivial valuation $\bar v$ on $K$. If we choose $K$ to be
algebraically closed, then it does not admit any Galois defect extension. Still,
$(L(a)|L,v_p\circ\bar v)$ is a Galois defect extension, and as the convex subgroup
associated with the extension $(L(a)|L,v_p)$ is $\{0\}$, by part 1) of Lemma~\ref{l2}
the convex subgroup associated with the extension $(L(a)|L,v_p\circ\bar v)$ is
$\bar v K$. This is the only convex subgroup of $(v_p\circ\bar v)L$ that
appears as convex subgroup associated with some Galois defect extension of
$(L,v_p\circ\bar v)$.
\end{example}

The situation changes when $(K,\bar v)$ itself admits Galois defect extensions. Then
these can be lifted to Galois defect extensions of $(L,v_p\circ\bar v)$, and
the convex subgroups associated with Galois defect extensions of $(K,\bar v)$ appear
as convex subgroups associated with Galois defect extensions of $(L,v_p\circ
\bar v)$ that are properly contained in $\bar v(Lv_p)$. This will be exploited in the
\sn
{\it Proof of part 2) of Theorem~\ref{prescracs}.} Let $\Delta$ denote the largest
proper convex subgroup of $\Gamma$. Denote by $\cC_\Delta^{\rm sp}$ the set of all
proper convex subgroups of $\Delta$ in $\cC^{\rm sp}$. By part 1) of
Theorem~\ref{prescracs} we can choose a perfect henselian valued field $(K,\bar v)$
of characteristic $p$ for which the associated convex subgroups are exactly the
elements of $\cC_\Delta^{\rm sp}$. We take $(L,v_p)$ as
described at the beginning of this section and consider $(L,v_p\circ\bar v)$ which is
a deeply ramified field since $(L,v_p)$ is and $K$ is perfect.

Now $\bar vK$ is the largest proper convex subgroup of $(v_p\circ\bar v)L$ and it is
shown as in the proof of part 1) of Theorem~\ref{prescracs} that a convex subgroup of
$\bar v K$ is an associated convex subgroup for $(K,\bar v)$ if and only if it is
an associated convex subgroup for $(L,v_p\circ\bar v)$.

It remains to deal with the convex subgroup $\bar v K$ of $v_p\circ\bar v L$. If it
is an element of $\cC^{\rm sp}$, then we are done because $(L(a)|L,v_p)$ is a Galois
defect extension of degree $p$ with independent defect, and it follows that also
$(L(a)|L,v_p\circ\bar v)$ is a Galois defect extension of degree $p$ with
independent defect.

Finally, assume that $\bar v K$ is not an element of $\cC^{\rm sp}$. Then we replace
$(L,v_p)$ by a maximal purely wild extension, which does not change the residue field
$K$ because it is perfect, and is a tame field by Lemma~\ref{mpwext}. After this,
$(L,v_p)$ does not admit any defect extension
and $\bar v K$ cannot be an associated convex subgroup for $(L,v_p)$. It is then shown
as in the proof of part 1) of Theorem~\ref{prescracs} that it also cannot be an
associated convex subgroup for $(L,v_p\circ \bar v)$. This completes the proof of
part 2) of Theorem~\ref{prescracs}.   \qed

\bn

\end{document}